\newcommand{\mf}[1]{\mathfrak{#1}}
\newcommand{\mr}[1]{\mathrm{#1}}
\newcommand{\MBC}{\mathbb{C}}
\newcommand{\MBF}{\mathbb{F}}
\newcommand{\MBN}{\mathbb{N}}
\newcommand{\MBQ}{\mathbb{Q}}
\newcommand{\MBZ}{\mathbb{Z}}
\newcommand{\MCD}{\mathcal{D}}
\newcommand{\MCF}{\mathcal{F}}
\newcommand{\MCI}{\mathcal{I}}
\newcommand{\MCJ}{\mathcal{J}}
\newcommand{\MCO}{\mathcal{O}}
\newcommand{\MCR}{\mathcal{R}}
\newcommand{\MCU}{\mathcal{U}}
\newcommand{\MCZ}{\mathcal{Z}}
\newcommand{\MFa}{\mathfrak{a}}
\newcommand{\MFb}{\mathfrak{b}}
\newcommand{\MFc}{\mathfrak{c}}
\newcommand{\MFd}{\mathfrak{d}}
\newcommand{\MFD}{\mathfrak{D}}
\newcommand{\MFf}{\mathfrak{f}}
\newcommand{\MFg}{\mathfrak{g}}
\newcommand{\MFm}{\mathfrak{m}}
\newcommand{\MFn}{\mathfrak{n}}
\newcommand{\MFp}{\mathfrak{p}}
\newcommand{\MFP}{\mathfrak{P}}
\newcommand{\MFq}{\mathfrak{q}}
\newcommand{\SCs}{\textsc{s}}
\newcommand{\GGd}{\delta}
\newcommand{\GGD}{\Delta}
\newcommand{\GVe}{\varepsilon}
\newcommand{\GGl}{\lambda}
\newcommand{\GGs}{\sigma}
\newcommand{\GGt}{\theta}
\newcommand{\GGw}{\omega}
\newcommand{\lA}{\left\{}
\newcommand{\rA}{\right\}}
\newcommand{\Gal}{\mathrm{Gal}}
\newcommand{\NN}{\mathrm{N}}
\newcommand{\red}{\mathrm{red}}
\title{On the $\mu$-invariant of Katz $p$-adic $L$ functions attached to imaginary quadratic fields and applications}
\author{{Hassan OUKHABA} and {St\'ephane VIGUI\'E}}
\newtheorem{cro}{cro}[section]
\newtheorem{lem}{lem}[section]
\newtheorem{pro}{pro}[section]
\newtheorem{teh}{teh}[section]
\newtheorem{cor}[cro]{Corollary}
\newtheorem{lm}[lem]{Lemma}
\newtheorem{pr}[pro]{Proposition}
\newtheorem{theo}[teh]{Theorem}
\begin{document}
\maketitle

\begin{abstract}
We extend to $p=2$ and $p=3$ the result of Gillard and Schneps which says that the $\mu$-invariant of Katz $p$-adic $L$ functions attached to imaginary quadratic fields is zero for $p>3$. The arithmetic interpretation of this fact is given in the introduction.
\end{abstract}

\noindent{\small\textbf{Mathematics Subject Classification (2010): 11G16, 11F85, 11S80, 11R23}} %11G16, 11R23
\\

\noindent{\small\textbf{Key words: Elliptic units, Coleman power series, $p$-adic $L$ functions, $\mu$-invariant, Iwasawa theory}}

\section{Introduction}

Let $k\subset\mathbb{C}$ be an imaginary quadratic field and let us denote by $\MCO_k$ its ring of integers. let $p$ be a prime number which splits completely in $k$, 
that is $p\MCO_k=\MFp\bar{\MFp}$, where $\MFp$ and $\bar{\MFp}$ are the prime ideals of $\MCO_k$ above $p$. 
Let us also fix $K$ a finite abelian extension of $k$. 
Then we denote by $K_\infty$ (resp. $k_\infty$)  the unique $\MBZ_p$-extension of $K$ (resp. $k$) unramified outside of $\MFp$. Of course we have $K_\infty=Kk_\infty$.
We are interested in the Galois group  
\begin{equation*}
X_\infty:=\mr{Gal}(M_\infty/K_\infty),
\end{equation*}
where $M_\infty$ is the maximal abelian $p$-extension of $K_\infty$ unramified outside of $\MFp$. 
Let us write $\Gamma$ for the Galois group of $K_\infty/K$, $\Gamma:=\mr{Gal}(K_\infty/K)$, 
then $X_\infty$ is naturally a module over the Iwasawa algebra $\Lambda:=\MBZ_p[[\Gamma]]$. 
Moreover, Greenberg has proved in \cite[Remark at the end of \S 4]{Gre78} that $X_\infty$ is a finitely generated torsion $\Lambda$-module 
and has no nontrivial finite $\Lambda$-submodule. 
Using an isomorphism $\Lambda\simeq\MBZ_p[[T]]$ we may write a generator of the characteristic ideal of $X_\infty$ as $p^\mu P(T)$, 
where $\mu$ is a non-negative integer and $P(T)$ is a distinguished polynomial. 

\begin{theo}\label{gillardshneps}$\left(Gillard\ for\ all\ k, \cite{gillard85,gillard87}\ and\ Schneps\ for\ k\ principal, \cite{schneps87}\right)$ The invariant $\mu$ vanishes for $p\geq 5$.
\end{theo}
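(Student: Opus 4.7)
The plan, following Gillard and Schneps, is to reduce the vanishing of $\mu$ to a non-vanishing statement modulo $p$ for a certain power series built from elliptic units, and then to establish this non-vanishing by a Ferrero--Washington style equidistribution argument.

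I would begin by producing a norm-compatible system $\{\eta_n\}_n$ of elliptic units in the finite layers $K_n$ of $K_\infty/K$, using values of Robert's Siegel function at $\mathfrak{p}^{n+1}$-torsion points of an elliptic curve with complex multiplication by $\mathcal{O}_k$ and good ordinary reduction at $\mathfrak{p}$. Completing at a prime of $K_\infty$ above $\mathfrak{p}$ and interpreting the system through a Lubin--Tate parameter attached to $\mathfrak{p}$, Coleman's theorem attaches to $\{\eta_n\}_n$ a power series $g(T)$ with coefficients in the ring of integers $\mathcal{O}$ of the completion of the maximal unramified extension of $k_\mathfrak{p}$. The Mahler/Mellin transform converts $g(T)$ into a measure on $\Gamma$, i.e.\ an element $\theta_g \in \Lambda$ once an isomorphism $\Lambda \simeq \mathbb{Z}_p[[T]]$ is fixed. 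Standard Iwasawa-theoretic arguments, using the explicit reciprocity law together with Rubin's proof of the one-variable Main Conjecture for imaginary quadratic fields, identify $\theta_g$ up to a unit of $\Lambda$ with a generator of the characteristic ideal of $X_\infty$. Consequently $\mu(X_\infty) = 0$ is equivalent to the non-vanishing of $g(T)$ modulo the maximal ideal of $\mathcal{O}$.

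The main step is then to prove this non-vanishing. Using the $q$-expansion of the Siegel function one rewrites the coefficients of $g(T) \bmod p$ as an explicit sequence of divisor-type sums in $\mathbb{F}_p$, and the strategy is to show that the $p$-adic digit distribution of these coefficients is ``normal'': every prescribed digit pattern occurs with positive density. In particular the sequence cannot vanish identically modulo $p$, so at least one coefficient of $g(T)$ is a unit modulo the maximal ideal. This is the CM analogue of the Ferrero--Washington normality argument for cyclotomic $p$-adic $L$-functions.

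The hard part is precisely this equidistribution statement. Its proof relies on combinatorial properties of the $q$-expansion of the Siegel function and, crucially, on the invertibility mod $p$ of the denominator $12$ that occurs in Robert's normalization. This invertibility fails for $p = 2$ and $p = 3$; removing the resulting obstruction, by replacing Robert's function by a suitably modified elliptic unit while preserving norm-compatibility, is precisely the task taken up in the rest of the paper.
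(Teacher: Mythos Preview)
The paper does not give its own proof of this statement; it is cited as the result of Gillard and Schneps, and the surrounding text only describes the two ingredients of Gillard's argument. Comparing your proposal to that description reveals two substantive divergences.

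First, the link you draw between the characteristic ideal of $X_\infty$ and the $p$-adic $L$-function via Rubin's Main Conjecture is both unnecessary and anachronistic (Gillard's paper is 1985). What is actually used, here and in Gillard, is the $p$-adic analytic class number formula: one computes $\#(X_\infty)_{\Gamma_n}=[M_n:K_\infty]$ in terms of special values $L_{p,\MFf_{\chi_0}}(\chi,0)=G_{p,\MFf_{\chi_0}}(\chi_0,\zeta-1)$ and reads off that $\mu(X_\infty)$ equals the $\mu$-invariant of the relevant power series. No divisibility of characteristic ideals, and hence no Main Conjecture, is needed for the equality of $\mu$-invariants.

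Second, and more seriously, the ``hard part'' you sketch is the wrong method. You propose a Ferrero--Washington style normality/equidistribution argument on $q$-expansion digits. That is not what Gillard (or Schneps) did, and no such argument is known to go through in the CM setting. Gillard's key input is the \emph{Sinnott} method: an algebraic-independence statement for the formal multiplications $[a]_E$ on $\widehat{E}$ (the analogue of Sinnott's result that the $(1+X)^a$ are algebraically independent over $\overline{\MBF}_p$), which forces the reduction mod $p$ of a sum $\sum f_a\circ[a]_E$ to be nonzero as soon as one term is. This is exactly the content of Theorem~\ref{gillardsinnott} and Proposition~\ref{linind} in the present paper, and it is this mechanism---not any normality of digit sequences---that yields $\mu=0$. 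Your diagnosis of the $p\ge 5$ restriction is also slightly off: the paper stresses that the algebraic-independence step is valid for all $p$, and that the restriction in Gillard's work came from his construction of the $p$-adic $L$-functions themselves (indeed via the $12$th powers), which is what the present paper repairs using Robert's $\psi$-functions and generalized Weierstrass models.
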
 
\noindent The ingredients used by Gillard to prove this important result are essentially:\par 
1) The algebraic independance of formal multiplications on elliptic curves.\par 
2) The $p$-adic $L$ functions for $p\geq5$.\\
The first ingredient is absolutely general and does not use the hypothesis $p\geq5$. It is the analogue of the algebraic independance result used by Sinnott in \cite{Sinnott84} to obtain a new proof of the theorem of Ferrero and Washington, cf. \cite{FerreroWashington79}. Moreover, the $p$-adic $L$ functions used by Gillard were constructed and studied by him only for $p\geq5$. Fortunately these functions are available for any $p$. They are constructed for instance in \cite{deshalit87}. In this paper we prove
\begin{theo}\label{theomu} $\mu=0$ for all $p$.
\end{theo}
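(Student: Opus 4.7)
The approach is to imitate Gillard's proof of Theorem \ref{gillardshneps}, systematically replacing his $p$-adic $L$-function (constructed in \cite{gillard85,gillard87} only for $p\geq5$) by the de Shalit $p$-adic $L$-function \cite{deshalit87}, which is available for every prime $p$. The other ingredient cited in the introduction --- the algebraic independence of formal multiplications on elliptic curves --- is characteristic-free and therefore imposes no restriction on $p$; only the analytic side of the argument needs to be set up for $p=2$ and $p=3$.

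More precisely, Greenberg's theorem guarantees that a generator $p^\mu P(T)\in\Lambda$ of the characteristic ideal of $X_\infty$ is well defined, and by the Iwasawa-theoretic comparison available at all primes split in $k$ (Rubin's main conjecture for imaginary quadratic fields, or equivalently the universal-norm description of $X_\infty$ in terms of elliptic units) this generator agrees, up to a $\Lambda$-unit, with the Iwasawa power series $\MCL_\MFp\in\Lambda$ built out of a norm-coherent system of elliptic units in $K_\infty/K$ through the Coleman homomorphism. Hence $\mu=0$ is equivalent to the non-vanishing of the reduction $\overline{\MCL_\MFp}$ modulo the maximal ideal of the coefficient ring. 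Following Sinnott-Gillard, one would expand $\MCL_\MFp$ as a finite $\MCO_k$-linear combination of logarithmic derivatives of Coleman power series $g_a$ attached to elliptic units indexed by classes $a$ of a suitable finite ray class group; after reduction modulo $\MFp$, each term becomes an expression in the $[a]$-multiplications on the Lubin--Tate formal group of height one arising from the good reduction at $\MFp$ of the CM elliptic curve, and the algebraic independence of these multiplications prevents a non-trivial linear combination from vanishing identically.

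The main obstacle --- and the new content of the paper --- is to derive such a Coleman-type formula for $\MCL_\MFp$ in the cases $p=2$ and $p=3$. Gillard's derivation for $p\geq5$ relies on explicit expressions involving a twelfth root of the discriminant and on the classical distribution relations, whose denominators are invertible precisely when $p\nmid 6$. For the small primes these denominators must be tracked carefully through de Shalit's construction and either shown to remain invertible in the appropriate completion, or absorbed by an auxiliary twist of the characters involved, all while preserving the non-triviality of the resulting linear combination. Once this compatibility is secured, the characteristic-free algebraic independence input closes the argument exactly as in Gillard's case, yielding $\mu=0$ uniformly in $p$.
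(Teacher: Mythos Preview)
Your high-level strategy is the right one, but the passage from the analytic $\mu$-invariant to the algebraic $\mu$-invariant of $X_\infty$ contains a genuine gap. You invoke ``Rubin's main conjecture for imaginary quadratic fields, or equivalently the universal-norm description of $X_\infty$ in terms of elliptic units'' as the bridge. The paper does \emph{not} do this, and for good reason: Rubin's proof of the main conjecture carries hypotheses on $p$ (in particular it excludes $p=2$ and $p=3$ in the relevant generality), so appealing to it as a black box at the small primes is circular --- you would be assuming precisely the kind of result the theorem is meant to supply. Instead, the paper bypasses the main conjecture entirely and argues via growth of coinvariants: one computes $\#(X_\infty)_{\Gamma_n}=[M_n:K_\infty]$ by a Coates--Wiles type class-field-theoretic formula in $h(K_n)$, $R_\MFp(K_n)$ and the $\MFp$-discriminant, then uses the Gillard--Robert index formula for elliptic units together with de Shalit's $p$-adic Kronecker limit formula to rewrite this order, up to a $p$-unit, as $\prod_{\chi_0,\zeta}G_{p,\MFf_{\chi_0}}(\chi_0,\zeta-1)$. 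Comparing with the Iwasawa growth formula $p^{\mu_\infty p^n+\lambda_\infty n+\nu_\infty}$ then forces $\mu_\infty=0$ as soon as each $G_{p,\MFf_{\chi_0}}(\chi_0,X)$ is prime to $p$ (the paper's Theorem \ref{gpfetlpf}). This route is available at all $p$ and is what replaces your main-conjecture step.

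On the analytic side your plan is also too vague at exactly the point where the new work lies. Saying that the denominators coming from the twelfth root of $\Delta$ ``must be tracked carefully \ldots\ or absorbed by an auxiliary twist'' does not identify the mechanism. The paper's concrete innovations are: (i) work with Robert's $\psi$-functions themselves rather than their $12$th powers, so that the canonical $12$th root $\delta(L,L')$ is built in and no extraneous $12$ appears; and (ii) use a \emph{generalized} Weierstrass model of the CM curve over the $\MFp$-integers, so that good reduction and hence integrality of the Coleman power series $\hat\Psi^L_{\Omega,\MFa,\MFb}$ hold at $p=2,3$ (Proposition \ref{PsiOL1} and Corollary \ref{PsiOL}). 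With these in hand, Gillard's Sinnott-type algebraic-independence argument (Theorem \ref{gillardsinnott}) goes through for $p\in\{2,3\}$, and Proposition \ref{linind} supplies the non-vanishing of the relevant linear combination modulo $\MFp$. Your proposal points in this direction but does not name either device, and without them the ``compatibility'' you allude to cannot be secured.
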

Our proof uses the approach of Gillard. It is valid for any prime $p$. The informed reader knows the crutial role of the $p$-adic $L$-functions for our purpose. For example each of the Iwasawa invariants of $X_\infty$ is related to the corresponding invariants of the $p$-adic $L_{p,\MFf_{\chi_0}}$-functions defined in section \ref{fonctionLP}. This fact is proved in \cite[Theorem 2.1 Chapter III, page 109]{deshalit87} when $p\geq3$ and follows from (\ref{final}) in the general case. For these reasons we have devoted most of this text to the construction and study of the $p$-adic $L$ functions. In this we rely on the book of de Shalit, cf.\cite{deshalit87}. In particular we use a special class of CM elliptic curves, whose description is given in section \ref{Robert} below. The Robert elliptic $\psi$-functions obtained from theses curves give us both elliptic units and the associated Coleman power series. We transform these power series into integral measures and then we obtain our $p$-adic $L$-functions by integrating characters.  Let us observe that the elliptic curves used have complex multiplications by $\mathcal{O}_k$ and have good ordinary reduction at $\MFp$. This requires us to use {\bf generalized Weierstrass equations} to obtain integral models at any pre-determined prime $p$, especially at $p=2$ or $p=3$. Also we should mention that it is the first time Robert's $\psi$-functions are used for the construction of $p$-adic $L$ functions. De Shalit and Gillard have used their $12$-th power. See section \ref{Robert}.\par
Recently Ashay Burungale and Ming-lun Hsieh proved in \cite{BuHs13} that the $\mu$-invariant of $p$-adic Heck $L$-functions for CM fields vanishes for any ordinary prime $p>2$. On may ask if it is also the case for $p=2$.

\section{Notations and conventions}\label{notation}
Throughout this paper we denote by $\overline{\mathbb{Q}}$ the algebraic closure of $\mathbb{Q}$ in $\mathbb{C}$ and we see all our number fields as subfields of $\overline{\mathbb{Q}}$. By definition $k^{\mathit{ab}}$ is the abelain closure of $k$ in $\overline{\mathbb{Q}}$. We choose, once for all, an embedding 
\begin{equation*}
i_{\MFp}:\overline{\mathbb{Q}}\longrightarrow\mathbb{C}_p,
\end{equation*} 
such that $\mathfrak{p}$ is the prime ideal of $k$ defined by $i_{\MFp}$. We denote by $\MFP$ the prime of $\overline{\mathbb{Q}}$ determined by $i_{\MFp}$ and for a subfield $H$ of $\overline{\mathbb{Q}}$ we denote by $H_\MFP$ the completion of $i_{\MFp}(H)$ in $\MBC_p$. By $\log:\mathbb{C}_p^\times\longrightarrow\mathbb{C}_p$ we shall mean the $p$-adic logarithm map satisfying $\log(p)=0$.\par
Let $K$ be a finite abelian extension of $k$ and let $E$ be a subgroup of finite index in the group of units $\mathcal{O}_K^\times$ of $K$ such that $\varepsilon_1,\ldots,\varepsilon_{d-1}$ is a basis of $E$ modulo torsion ($d=[K:k]$). Let $\tau_1,\ldots,\tau_d$ be the embeddings of $K$ in $\mathbb{C}_p$ that induce $\mathfrak{p}$ on $k$. Let us remark that for every $i$ there exists a unique $\sigma_i\in\mathrm{Gal}(K/k)$ such that $\tau_i=i_{\mathfrak{p}}\circ\sigma_i$. Then we define the $\MFp$-adic regulator of $E$ by
\begin{equation*} 
 R_{\MFp}(E):=\det(\log \tau_i(\varepsilon_j))_{1\leq i,j\leq d-1}.
\end{equation*}
If $E=\mathcal{O}_K^\times$ then we write  $R_{\MFp}(K)$ instead of $R_{\MFp}(\mathcal{O}_K^\times)$.\par
Let us denote $h(K)$ the ideal class number of $K$, $\mu(K)$ the group of roots of unity in $K$ and $w_K$ the order of $\mu(K)$. If $\MFa$ is a fractional ideal of $k$ prime to the conductor of $K/k$ then we denote $(\MFa, K/k)$ the automorphism of $K/k$ associated to $\MFa$ by the Artin map. If $\mathfrak{f}$ is an integral ideal of $k$ then we write $k(\mathfrak{f})$ for the ray class field modulo $\mathfrak{f}$. Hence, $k(1)$ is the Hilbert class field of $k$. As always the number of roots of unity congruent to 1 modulo $\MFf$ will be denoted $w_{\MFf}$.

\section{Robert elliptic functions and Shimura curves}\label{Robert}
%We fix an embedding $i_\infty:k^\alg\xrightarrow{}\MBC$.
Let $L\subset L'$ be two lattices of $\MBC$ such that the index $[L':L]$ is prime to $6$, and let $\wp_L$ be the Weierstrass function attached to $L$.
G.\,Robert introduced in \cite{robert90} the elliptic function
\begin{equation}
\label{definitionpsi}
\psi\left( z; L, L' \right) := \GGd\left( L, L' \right) 
\prod_{\rho\in \MCZ\left( L,L' \right)} \left( \wp_L\left( z \right) - \wp_L\left( \rho \right) \right)^{-1},
\end{equation}
where $\GGd\left( L, L' \right)$ is the canonical $12$-th root of $\GGD\left( L \right)^{ \left[L':L\right] }/\GGD\left( L' \right)$ defined in \cite{robert90a},
and $\MCZ\left(L, L' \right)$ is any complete representative system of $\left(L'/L\right)\setminus\{0\}$ 
modulo $\{-1,1\}$. The period lattice of $\psi\left( z; L, L' \right)$ is equal to $L$ and its divisor is the sum
\begin{equation}\label{diviseurpsi}
 ([L':L]-1)(0)_L-\sum_{\rho\in\MCZ\left( L,L' \right)}\pm\rho.
\end{equation}
For any $\GGl\in\MBC^\times $, we have
\begin{equation}
\label{psihomogene}
\psi\left( \GGl z ; \GGl L , \GGl L' \right) = \psi\left( z;L,L' \right).
\end{equation} 
Now assume that $L$ and $L'$ are $\MCO_k$-submodules of $\MBC$.
By \cite[(1) p.\,232]{robert90}, for any ideal $\MFa$ of $\MCO_k$ prime to $6$ satisfying $L'\cap\MFa^{-1}L=L$, we have
\begin{equation}
\label{distributionpsi}
\psi\left( z ; L' , \MFa^{-1}L' \right) = \prod_{\rho} \psi\left( z + \rho ; L , \MFa^{-1}L \right),
\end{equation} 
where the product is over any complete system of representatives system of $L'/L$. Moreover, if $\rho\neq0$ is an $\MFm$-torsion point of $\MBC/L$ for some ideal $\MFm$ of $\MCO_k$ prime to $\mathfrak{a}$, then $\psi\left( \rho;L,\MFa^{-1}L \right)\in k(\MFm)^{\times}$ and for all ideal $\MFb$ prime to $\MFm$ we have
\begin{equation}
\label{psigalois}
\psi\left( \rho;L,\MFa^{-1}L \right)^{\left( \MFb,k(\MFm)/k \right)} 
= \psi\left( \rho; \MFb^{-1}L, \left(\MFa\MFb\right)^{-1}L \right).
\end{equation} 
In particular if $\MFm=\MFn\MFq$ where $\MFn$ is an ideal of $\mathcal{O}_k$ and $\MFq$ is a prime ideal of $\MCO_k$, then we have
\begin{equation}
\label{normpsi}
\NN_{k(\MFn\MFq)/k(\MFn)}\left(\psi\left( \rho;L,\MFa^{-1}L \right)\right)^{\frac{w_{\MFn}}{w_{\MFn\MFq}}}
= 
\begin{cases}
\psi\left( \rho;\MFq^{-1}L,\MFa^{-1}\MFq^{-1}L \right) & if\ \MFq\mid\MFn\\
\psi\left( \rho;\MFq^{-1}L,\MFa^{-1}\MFq^{-1}L \right)^{1-(\MFq,k(\MFn)/k)^{-1}} & if\ \MFq\nmid\MFn.
\end{cases}
\end{equation} 
%where the product is over any complete system of representatives of $\MFq^{-1} L$ mod $L$.\par
%Let us mention that $\psi\left(\rho;L,\MFa^{-1}L\right)\in\MCO_{k(\MFm)}^\times$ 
%if $\MFm$ is divisible by two distinct maximal ideals.
Theses special values of Robert $\psi$-functions are used for instance in \cite{oukhaba2006, oukhaba2007} to construct groups of elliptic units with very ineresting properties. We also draw the attention of the reader that theses functions have an analogue in positive caracteristique and are closely related to Drinfel'd modules. See \cite{oukhaba1991} for more details.\par
\bigskip
To construct $p$-adic $L$ functions it is crutial to use lattices $L$ associated to Shimura elliptic curves, that is those curves satisfying the properties $E1$ and $E3$ given below. Therefore, from now on we fix an integral ideal $\MFf$ of $\mathcal{O}_k$ prime to $\MFp$ such that $w_{\MFf}=1$, and we set
\[  F:=k(\mathfrak{f}).\]
Also we fix an elliptic curve $E$ such that
%\begin{equation*}
\begin{equation}\label{shimura}
\begin{split}
 &E1.\ E\ \mathit{has\ complex\ multiplication\ by}\ \mathcal{O}_k.\\
 &E2.\ E\ \mathit{is\ defined\ over}\ k(\mathfrak{f}).\\
 &E3.\ \mathit{All\ the\ points\ of\ finite\ order\ of}\ E\ \mathit{are\ rational\ over}\ k^{\mathit{ab}}.\\
 &E4.\ E\ \mathit{has\ good\ reduction\ at\ all\ the\ primes\ of}\ k(\mathfrak{f})\ \mathit{not\ dividing}\ \MFf\mathcal{O}_F.\\
\end{split}
\end{equation}
%\end{equation*}
%We denote by $E$ the elliptic curve constructed in \cite[II.4.1]{deshalit87} under the condition $w_\MFf=1$.
%A prime ideal $\MFq$ of $\MCO_k$ is of good reduction if and only if $\MFq\nmid\MFf$. 
%For an ideal $\MFa$ of $\MCO_k$, if $\MFf\vert\MFa$ then we have
%\begin{equation}
%\label{corpsclasseelliptic}
%k(\MFa) = k(\MFf)\left( E[\MFa] \right),
%\end{equation} 
%where $k(\MFf)\left( E[\MFa] \right)$ is the extension of $k(\MFf)$ generated by the coordinates of the points in $E[\MFa]$.
The existence of elliptic curves defined over $k^{ab}$ and satisfying $E1$ and $E3$ is proved in \cite[page 216]{Shimura71}. The existence of elliptic curves $E$ with the additional conditions $E2$ and $E4$ is proved for instance in \cite[Chap. II, \S 1.4 Lemma]{deshalit87}. Let $R$ be the integral closure in $F$ of the discrete valuation ring $S^{-1}\mathcal{O}_k$, where  $S:=\mathcal{O}_k\setminus\mathfrak{p}$. We fix a minimal generalized Weierstrass model of $E$ with coefficients in $R$, 
\begin{equation}
\label{equacourbe}
y^2 + a_1 xy + a_3 y = x^3 + a_2 x^2 + a_4 x + a_6.
\end{equation}
By minimal we mean that the discriminant of such an equation is a unit at all the prime ideals of $F$ above $\mathfrak{p}$. Let $\omega_E$ be the usual holomorphic invariant differential attached to (\ref{equacourbe}), that is
\[ \omega_E=\frac{dx}{2y+a_1x+a_3}.\]
The pair $(E,\omega_E)$ determines a unique $\MCO_k$-lattice $L$ of $\MBC$ such that the following isomorphism holds,
\[\GGt_{\infty,E} : \MBC/L \xrightarrow{\quad\sim\quad} E\left( \MBC \right),\]
where for all $z\in\MBC\setminus L$, $\GGt_{\infty,E}(z)$ is the unique point whose coordinates $x_\infty (z)$ and $y_\infty (z)$ are given by 
\begin{equation}\label{xyetstrass}
x_\infty (z):=\wp_L\left( z \right)-b_2/12 \quad\text{and}\quad y_\infty (z):=\left( \wp_L'\left( z \right) - a_1 x_\infty (z) - a_3\right)/2,
\end{equation}
where $b_2:=a_1^2+4a_2$. The coefficients of the differential equation $(\wp_L')^2=4\wp_L^3-g_2(L)\wp_L-g_3(L)$ satisfied by $\wp_L$ are given by
\begin{equation}\label{g2g3}
g_2(L)=12\Bigl(\frac{b_2}{12}\Bigr)^2-2b_4\quad\mathrm{and}\quad g_3(L)=\frac{b_2b_4}{6}-b_6-8\Bigl(\frac{b_2}{12}\Bigr)^3,
\end{equation}
where $b_4:=2a_4+a_1a_3$ and $b_6:=a_3^2+4a_6$. Moreover, the discriminant of the equation (\ref{equacourbe}) is
\begin{equation}\label{discriminant}
\Delta=-b_2^2b_8-8b_4^3-27b_6^2+9b_2b_4b_6=g_2^3-27g_3^2=\Delta(L),
\end{equation}
where $b_8:=a_1^2a_6+4a_2a_6-a_1a_3a_4+a_2a_3^2-a_4^2$.\par 
Let us fix an isomorphism $\iota:k\longrightarrow\mathrm{End}_{\mathbb{Q}}(E):=\mathbb{Q}\otimes_{\mathbb{Z}}\mathrm{End}(E)$ such that
\begin{equation}\label{polarisation}
\iota(\mu)^*\omega_E=\mu\omega_E,\quad\mathrm{for\ all}\ \mu\in k,
\end{equation}
If $\sigma\in\mathrm{Aut}(\mathbb{C})$ then the conjugate $E^{\sigma}$ of $E$ has also complex multiplications by $\mathcal{O}_k$. Thus $E$ is isogenous to $E^{\sigma}$ by \cite[Proposition 4.9]{Shimura71}. Now suppose that $\sigma\in\mathrm{Gal}(\mathbb{C}/k)$ and that the restriction of $\sigma$ to $F$ is the automorphism $\sigma_{\mathfrak{a}}=(\mathfrak{a},F/k)$ of $F/k$ associated by the Artin map to some integral ideal of $k$ prime to $\mathfrak{f}$, then by Shimura reciprocity law there exists an isogeny $\lambda_{\mathfrak{a}}:E\longrightarrow E^{\sigma_{\mathfrak{a}}}$ uniquely determined by the condition 
\begin{equation}\label{caracterisation}
 \rho^{\sigma}=\lambda_{\mathfrak{a}}(\rho),
\end{equation}
for any torsion point of $E$ of order prime to $\mathfrak{a}$, \cite[Theorem 5.4]{Shimura71}. It is easy to see from (\ref{caracterisation}) that $\lambda_{\mathfrak{a}}^\tau=\lambda_{\mathfrak{a}}$, for any $\tau\in\mathrm{Gal}(\mathbb{C}/F)$. This proves that $\lambda_{\mathfrak{a}}$ is defined over $F$. Let $\omega_E^{\sigma_{\mathfrak{a}}}$ be the image of  $\omega_E$ by $\sigma_{\mathfrak{a}}$, then there exists $\Lambda(\mathfrak{a},L)\in F$ such that
\begin{equation}\label{facteur}
 \lambda_{\mathfrak{a}}^*\omega_E^{\sigma_{\mathfrak{a}}}=\Lambda(\mathfrak{a},L)\omega_E.
\end{equation}
Indeed, $\lambda_{\mathfrak{a}}^*\omega_E^{\sigma_{\mathfrak{a}}}$ is a holomorphic differential on $E$ defined over $F$, and we know that such differentials form an $F$-vector space of dimension $1$. Also we deduce from \cite[Theorem 5.4]{Shimura71} that the lattice attached to $(E^{\sigma_{\mathfrak{a}}},\omega_E^{\sigma_{\mathfrak{a}}})$ is $L_{\mathfrak{a}}:=\Lambda(\mathfrak{a},L)\mathfrak{a}^{-1}L$ and the following diagram commutes

\begin{equation*}
\begin{CD}
\mathbb{C}@>>>\mathbb{C}/L@>\theta_{\infty,E}>> E\\
@V\Lambda(\mathfrak{a},L)VV @VVV @VV\lambda_{\mathfrak{a}}V\\
\mathbb{C}@>>>\mathbb{C}/L_{\mathfrak{a}}@>\theta_{\infty,E^{\sigma_{\mathfrak{a}}}}>> E^{\sigma_{\mathfrak{a}}}.
 \end{CD}
\end{equation*}
In particular,
\begin{equation}\label{g2Lag3La}
g_n(L)^{\sigma_{\mathfrak{a}}}=g_n(L_{\mathfrak{a}})=\Lambda(\mathfrak{a}, L)^{-2n}g_n(\mathfrak{a}^{-1}L),\quad n\in\{2,3\}.
\end{equation}
Also we see by using (\ref{caracterisation}) and the above commutative diagram that $\Lambda(x\mathcal{O}_k, L)=x$ for any $x\in\mathcal{O}_k$ satisfying $x\equiv1$ modulo $\MFf$. Another consequence of (\ref{caracterisation}) is the relation $\lambda_{\mathfrak{ab}}=\lambda_{\mathfrak{a}}^{\sigma_{\MFb}}\circ\lambda_{\mathfrak{b}}=\lambda_{\mathfrak{b}}^{\sigma_{\MFa}}\circ\lambda_{\mathfrak{a}},$ from which we derive the cocycle rule
\begin{equation*}
\Lambda(\mathfrak{ab},L)=\Lambda(\mathfrak{a},L)^{\sigma_{\MFb}}\Lambda(\mathfrak{b},L),
\end{equation*}
for all the integral ideals $\MFa$ and $\MFb$ prime to $\MFf$. This allows us to define $\Lambda(\mathfrak{a},L)$ even for fractional ideals of $k$ that are prime to $\MFf$. Let us remark that $E^{\sigma_{\MFb}}$ also satisfies (\ref{shimura}) and
\begin{equation}\label{lambdala}
\Lambda(\MFa, L_{\MFb})=\Lambda(\mathfrak{a},L)^{\sigma_{\MFb}}.
\end{equation}

%from which we deduce that
%\begin{equation}\label{rationalite}
%\Lambda(\mathfrak{a},L)\in F,\quad\mathrm{for\ all}\ \MFa.
%\end{equation}\par
%Therefore $\lambda_{\MFa}$ is defined over $F$. 
Let $\widehat{E}$ be the one parameter formal group of $E$ at the origine, with parameter $t:=-x/y$. $\widehat{E}$ is defined over $\MBZ[a_1,\ldots,a_6]$ and we have
\[\widehat{E}(t(P),t(Q))=t(P+Q).\]
To go further we consider $E$ as defined on $\mathbb{C}_p$ by using the embeding $i_{\MFp}$. Let $M\subset\MBC_p$ be any complete field containing $F_{\MFP}$. Let $\MCO_{M}$ be the valuation ring of $M$ and let $\MFp_{M}$ be the maximal ideal of $\MCO_{M}$. We write $\widehat{E}(\MFp_M)$ for $\MFp_{M}$ endowed with the group law given by $\widehat{E}$.
%We denote by $[+]_\MFP$ the addition of $\widehat{E}$ and also the addition defined on $\MFm_{\MBC_p}$ by $\widehat{E}_\MFP$.
%For any $a\in\MCO_{F_\MFP}$, we denote by $[a]_\MFP$ the multiplication by $a$ on $\widehat{E}_\MFP$ or $\MFm_{\MBC_p}$ ({\bf definir ce dernier}).
%We can see $E$ as defined over $\MBC_p$ via $i_\MFp$,
Let $E_1\left(M\right)$ be the kernel of $E\left(M\right)\xrightarrow{}\tilde{E}\left(\mathbb{M}_{\MFP}\right)$,
where $\mathbb{M}_{\MFP}:=\MCO_{M}/\MFp_{M}$ and $\tilde{E}$ is the reduction of $E$.
%We recall that for any $(x,y)\in E\left(\MBC_p\right)$,
%\begin{equation}
%\label{Eun}
%(x,y)\in E_1\left(\MBC_p\right) \Longleftrightarrow x^{-1}\in \MFm_{\MBC_p} \Longleftrightarrow y^{-1}\in \MFm_{\MBC_p}.
%\end{equation} 
%Moreover we have
%\begin{equation}
%\label{Euntorsion}
%E_1\left(\MBC_p\right)\cap E_{\tor} = E \left[\MFp^{\infty}\right], 
%\end{equation} 
%where $E_{\tor}$ denotes the group of torsion points of $E$.
When $\MCO_{M}$ is a discrete valuation ring then it is proved, for instance in \cite[Chap. VII, Proposition 2.2.]{Silverman92}, that there is a group isomorphism
\begin{eqnarray*}
\GGt_{\MFp,E}^{M} :  \widehat{E}(\MFp_{M})& \longrightarrow & E_1\left(M \right)\\
                     t & \longmapsto & (t/s(t), -1/s(t))\\
\end{eqnarray*}
%where for all $z\in\MFm_{\MBC_p}\setminus\{0\}$, $\GGt_\MFp(z)$ is the unique point which coordinates $x_\MFp (z)$ and $y_\MFp (z)$ are given by 
%\[x_\MFp (z) := -z/s(z) \quad\text{and}\quad y_\MFp (z) := 1/s(z),\]
where $s(t)=t^3+i_{\MFp}(a_1)t^4+i_{\MFp}\left( a_2+a_1^2 \right)t^5+...$ is the unique series with coefficients in $\MCO_{F_{\MFP}}$ such that $s(0)=0$ and
\[s(t)=t^3+ i_{\MFp}(a_1) ts(t) + i_{\MFp}(a_2) t^2s(t) + i_{\MFp}(a_3) t(z)^2 + i_{\MFp}(a_4) ts(t)^2 + i_{\MFp}(a_6) s(t)^3 \qquad\text{for all $t\in \MFp_{M}$.}\]
In other words, we have the power series expansions
\begin{equation}\label{aetb}
i_{\MFp}(x_\infty)=t^{-2}a(t), \quad\quad i_{\MFp}(y_\infty)=-t^{-3}a(t),\quad\quad a(t):=t^3/s(t).
\end{equation}
We remark that
\begin{equation}\label{coeffdea}
a(t)\in\MCO_{F_{\MFP}}[[t]]\ \mathrm{and}\ a(t)=1-i_{\MFp}(a_1)t-i_{\MFp}(a_2)t^2\ \mathrm{modulo}\ t^3\MCO_{F_{\MFP}}[[t]].
\end{equation}
Actually one may also define a map $\GGt_{\MFp,E}^{\MBC_p}$ in a similar way as above and prove that it is an isomorphism by using essentially the same arguments used in the case of local fields. We shall write $\GGt_{\MFp,E}$ instead of $\GGt_{\MFp,E}^{\MBC_p}$. Let $\overline{\MBQ}\left(\wp_L,\wp_L'\right)$ be the subfield of the field of $L$-elliptic functions generated by $\overline{\MBQ}$, $\wp_L$ and $\wp_L'$. Let
%Let $\MCH(t)$ be the fraction field of $\MCO_{H_{\MFP}}[[t]]$,
\[\mathit{C_E}:\overline{\MBQ}\left(\wp_L,\wp_L'\right)\xrightarrow{\qquad}\MBC_p((t)),\]
be the unique homomorphism of rings extending the map $i_{\MFp}$, such that $C_E(x_\infty)=t^{-2}a(t)$ and $C_E(y_\infty)=-t^{-3}a(t)$. In the sequel we will also use the notation $\hat{f}$ for $C_E(f)$. The normalized invariant differential of $\widehat{E}/\mathcal{O}_{F_{\MFP}}$ is given by
\[\hat{\omega}_E=\frac{d\hat{x}_\infty}{2\hat{y}_\infty+i_{\MFp}(a_1)\hat{x}_\infty+i_{\MFp}(a_3)}=\lambda'(t)dt,\]
where $\lambda\in t+t^2F_{\MFP}[[t]]$ is the unique logarithm map of $\widehat{E}$ satisfying $\lambda'(0)=1$, cf. \cite[Chap.IV, \S4 and \S5]{Silverman92}. It is also well known that $\lambda'(t)\in\MCO_{F_{\MFP}}[[t]]^{\times}$ as proved in \cite[\S 5.8]{haz12}. It is interesting to remark that the equality $t=-x/y$ and (\ref{xyetstrass}) allow us to expand $t$ as a power series in $z$, $t=\varphi(z)$. The logarithm $\lambda$ is the inverse of $\varphi$, that is $\varphi\circ\lambda(t)=t$. Thus, if $P(f)$ is the Laurent development of $f$ at $z=0$ then
\begin{equation}\label{lambdaetchapeau}
\hat{f}(t)=(P(f)\circ\lambda)(t).
 \end{equation}

Let us recall one more helpful relation, used in the proof of Propositions \ref{psicoleman1} and \ref{psicoleman}. If $f\in \overline{\MBQ}\left(\wp_L,\wp_L'\right)$ and $P\in E_1\left(\overline{\MBQ}\right)$, 
where $E_1\left(\overline{\MBQ}\right):=E_1\left(\MBC_p\right)\cap E\left(i_{\MFp}(\overline{\MBQ})\right)$, then we have
\begin{equation}
\label{vivelesabeilles}
 f\left( \GGt_{\infty,E}^{-1}\left( P \right) \right) 
= i_\MFp^{-1}\left( \hat{f}\left( \GGt_{\MFp,E}^{-1}\left( P \right) \right) \right),
\end{equation} 
whenever it is defined.\par
%Let $k^{\nr}$ be the maximal abelian extension of $k$ unramified at $\MFp$ and $L$ be the completion of $k^{\nr}$ in $\MBC_p$.
If $\MFa$ is an integral ideal of $\mathcal{O}_k$ then we denote by $E[\MFa]$ the group of points of $E$ annihilated by all $a\in\MFa$. By definition $F(E[\MFa])$ is the abelian extension of $k$ generated by $F$ and by the coordinates of the points of $E$ in $E[\MFa]$. It is proved in many references that $F(E[\MFa])=k(\MFa)$ for any elliptic curve $E$ satisfying $E1, E2$ and $E3$ in (\ref{shimura}) and for any $\MFa$ such that $\MFf\vert\MFa$. See \cite{arthaud78,CW77,deshalit87, goldschapp81} for the details. We point out that in \cite{arthaud78, CW77} the imaginary quadratic field $k$ is assumed to be principal. We deduce from above that if $\MFa$ is prime to $\MFf$ then $F(E[\MFa])=k(\MFa\MFf)$.
\begin{pr}\label{PsiOL1}
Let $\MFm\neq \MCO_k$ be an integral ideal prime to $\MFp$, 
and let $\Omega\in \MBC$ be a primitive $\MFm$-torsion point of $\MBC/L$ $\left(\MFm=\Omega^{-1}L\cap\mathcal{O}_k\right)$.
For any ideal $\MFb$ prime to $6\MFm\MFf\MFp$, the $L$-elliptic function $\Psi_{\Omega,\MFb}^L$ defined by
$\Psi_{\Omega,\MFb}^L(z):=\psi\left( z+\Omega; L,\MFb^{-1}L \right)$ is such that 
\begin{equation}\label{coefficients}
 \hat{\Psi}_{\Omega,\MFb}^L(t)\in \MCO_{H_{\MFP}} \left[ \left[t\right] \right]^\times, 
\end{equation}
where $H=F(E[\MFm])$.
\end{pr}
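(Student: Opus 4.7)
The plan is to realize $\Psi_{\Omega,\MFb}^L$ as a rational function in $(\wp_L,\wp_L')$ with coefficients in $H=F(E[\MFm])$, then to pass to a $t$-series via $C_E$, using the good reduction of $E$ at $\MFP$ together with the coprimality of $\MFm$ and $\MFb$ to $\MFp$.

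First I would verify that $\Psi_{\Omega,\MFb}^L$ is regular and non-vanishing at $z=0$. By (\ref{diviseurpsi}) the divisor of $\psi(\,\cdot\,;L,\MFb^{-1}L)$ is supported on $\MFb^{-1}L/L$, so the shifted divisor of $\Psi_{\Omega,\MFb}^L$ avoids $z=0$ precisely when $\Omega\notin\MFb^{-1}L$. Since $\MFm+\MFb=\MCO_k$ one has $\MFm^{-1}L\cap\MFb^{-1}L=L$, and $\Omega\in\MFm^{-1}L\setminus L$ yields the conclusion. Consequently $\hat\Psi_{\Omega,\MFb}^L(t)$ has no negative $t$-terms, with constant term $\psi(\Omega;L,\MFb^{-1}L)\in k(\MFm)^\times\subset H^\times$ by (\ref{psigalois}).

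Next I would write $\Psi_{\Omega,\MFb}^L(z)=\GGd(L,\MFb^{-1}L)\cdot Q(\wp_L(z+\Omega))^{-1}$ with $Q(X):=\prod_{\rho\in\MCZ(L,\MFb^{-1}L)}(X-\wp_L(\rho))$. A symmetric-function argument, using the $\pm 1$-symmetry built into $\MCZ(L,\MFb^{-1}L)$ together with the $\mathrm{Gal}(F(E[\MFb])/F)$-action on the $\wp_L(\rho)$'s, shows $Q\in F[X]$. Then I would use the \emph{generalized} Weierstrass addition formula on $(E,\omega_E)$---not the classical $\wp$-addition formula, which carries a $1/4$ that fails at $p=2$---to expand $\wp_L(z+\Omega)=x_\infty\bigl(\GGt_{\infty,E}(z)+\GGt_{\infty,E}(\Omega)\bigr)+b_2/12$ as a rational function of $(\wp_L(z),\wp_L'(z))$ with coefficients in $R[x_\infty(\Omega),y_\infty(\Omega)]\subset H$. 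This shows $\Psi_{\Omega,\MFb}^L\in H(\wp_L,\wp_L')$.

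For the integrality and unit property, by (\ref{aetb})--(\ref{coeffdea}) we have $C_E(\wp_L),C_E(\wp_L')\in\MCO_{F_\MFP}((t))$. Since $\GGt_{\infty,E}(\Omega)\in E[\MFm]$ is torsion of order prime to $p$, good reduction implies $\GGt_{\infty,E}(\Omega)\notin E_1$, so $x_\infty(\Omega),y_\infty(\Omega)\in\MCO_{H_\MFP}$; the coefficients of $Q$ are in $\MCO_{F_\MFP}$ by the same reasoning applied to the $\rho$'s. The scalar $\GGd(L,\MFb^{-1}L)$ is a $\MFP$-unit because $\GGd^{12}=\GGD(L)^{N(\MFb)}/\GGD(\MFb^{-1}L)$ and both discriminants are $\MFP$-units (minimality of the equation and good reduction of both $E$ and $E^{\sigma_\MFb}$). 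Finally each factor $C_E\bigl(\wp_L(z+\Omega)-\wp_L(\rho)\bigr)$ is a power series in $t$ whose constant term $\wp_L(\Omega)-\wp_L(\rho)$ is a $\MFP$-unit, since $\GGt_{\infty,E}(\Omega\mp\rho)$ is a non-zero prime-to-$p$ torsion point (by the same $\MFm^{-1}L\cap\MFb^{-1}L=L$ argument), which therefore reduces to a non-zero point of $\tilde E$. Each such factor thus lies in $\MCO_{H_\MFP}[[t]]^\times$, and $\hat\Psi_{\Omega,\MFb}^L\in\MCO_{H_\MFP}[[t]]^\times$ follows. The main obstacle is the descent from the naive coefficient field $F(E[\MFb\MFm])$ down to $H$, which is managed by the symmetric-function packaging into $Q\in F[X]$, combined with the need to avoid small-prime denominators at $p=2,3$ by using the generalized rather than the classical addition formula.
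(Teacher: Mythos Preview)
Your approach is the paper's: write $\delta(L,\MFb^{-1}L)/\psi(z;L,\MFb^{-1}L)$ as a monic polynomial $\prod_\rho(x_\infty(z)-x_\infty(\rho))$ with $\MFP$-integral coefficients in $F$, translate by $\Omega$ via the addition law on the generalized Weierstrass model, check that each linear factor has $\MFP$-unit constant term because prime-to-$p$ torsion injects into $\tilde E$, and verify that $\delta(L,\MFb^{-1}L)$ is itself a $\MFP$-unit in $F$.

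Two places where your write-up slips precisely at the primes $p\in\{2,3\}$ that the paper is designed to handle. First, the assertions $C_E(\wp_L)\in\MCO_{F_\MFP}((t))$ and ``the coefficients of $Q$ are in $\MCO_{F_\MFP}$'' are false when $12\notin\MCO_{F_\MFP}^\times$, since $\wp_L=x_\infty+b_2/12$ and $\wp_L(\rho)=x_\infty(\rho)+b_2/12$. The paper works throughout with $x_\infty$ instead of $\wp_L$; your factor-by-factor argument is then rescued by the identity $\wp_L(z+\Omega)-\wp_L(\rho)=x_\infty(z+\Omega)-x_\infty(\rho)$, which is what the paper actually computes. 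Second, your justification that $\delta(L,\MFb^{-1}L)$ is a $\MFP$-unit conflates $\Delta(\MFb^{-1}L)$ with the discriminant $\Delta(L_\MFb)=\Lambda(\MFb,L)^{-12}\Delta(\MFb^{-1}L)$ of the minimal model of $E^{\sigma_\MFb}$; good reduction of $E^{\sigma_\MFb}$ controls the latter, not the former. The paper instead invokes Robert's theorem to place $\delta$ in $\MBQ(g_2(L),g_3(L),g_2(\MFb^{-1}L),g_3(\MFb^{-1}L))\subset F$, and then uses the classical fact that $\Delta(L)/\Delta(\MFb^{-1}L)$ generates $\MFb\MCO_{k(1)}$, prime to $\MFp$.
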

\begin{proof} It is well known that $\hat{\Psi}_{\Omega,\MFb}^L(t)^{12}\in\MCO_{H_{\MFP}} \left[ \left[t\right] \right]^\times$ for $L=\Omega\MFf$, $\MFm=\MFf$ and $\Omega=\Omega$. This is proved in \cite[chap II, \S 4.9 Proposition (i), page 72]{deshalit87}. We point out that $\hat{\Psi}_{\Omega,\MFb}^L(t)^{12}\in \MCO_{H_{\MFP}} \left[ \left[t\right] \right]^\times$ is also proved in earlier works, but under some restrictive hypotheses, cf. \cite{CW77, arthaud78, rubin81, yager82}. To check (\ref{coefficients}) we observe first that the function
\begin{equation*}
\Gamma(z):=\delta(L,\MFb^{-1}L)/\Psi(z,L,\MFb^{-1}L)
\end{equation*}
is a polynomial in $x_\infty(z)$ with coefficients in $R$ and leading coefficient  equal to 1. Indeed, we have $\wp_L\left( z \right) - \wp_L\left(z' \right)=x_\infty(z)-x_\infty(z')$, for any $z,z'\in\mathbb{C}\setminus L$. Moreover, the $x$-coordinates $x_\infty\left(\gamma\right), \gamma\in\MCZ:=\MCZ(L,\MFb^{-1}L)$, are permuted by $\mathrm{Gal}(\mathbb{C}/F)$. Since they are integral at all the primes above $\MFp$ our claim is proved.
%On the other hand 
%\begin{equation*}
%\delta(L,\MFb^{-1}L)\in\MBQ(g_k(L),g_k(\MFb^{-1}L)))_{k\in\{2,3\}}
%\end{equation*}
%by \cite[Th\'eor\`eme 4]{robert90a}. But ({\bf il reste à trouver à quel corps cela appartient}). 
%Further we have
%\begin{equation*}
% \delta(L,\MFb^{-1}L)^{12}=\frac{\Delta(L)^{N(\MFb)}}{\Delta(\MFb^{-1}L)}
%\end{equation*} 
%This clearly shows that $\delta(L,\MFb^{-1}L)$ is a $\MFP$-unit.\par
Now if we replace $x_\infty(z)$ by $x_\infty(z+\Omega)$ the addition law on $E$ gives
\begin{equation*}
x_\infty(z+\Omega)= 
\left( \frac{ y_\infty (z)-y_\infty (\Omega)}{x_\infty (z) -x_\infty (\Omega)}\right)^2 + a_1\frac{ y_\infty (z)-y_\infty (\Omega)}{x_\infty (z) -x_\infty (\Omega)} 
-a_2-x_\infty (z) -x_\infty (\Omega) . 
\end{equation*}
Therefore we deduce that $\Gamma(z+\Omega)$ is a rational function of $x_\infty (z)$ and $y_\infty (z)$ with $\MFP$-integral coefficients in $H$.
%$\hat{\Psi}_{\rho,\MFb}(t)$ belongs to the fraction field of $\MCO_{H_{\MFP}} \left[ \left[t\right] \right]$.
Let $f(z):=x_\infty(z+\Omega)-x_\infty(\gamma)$, for a fixed $\gamma\in\MCZ$. Then, by applying (\ref{aetb}) and (\ref{coeffdea}) we see that  $\hat{f}(t)=i_{\MFp}(x_\infty(\Omega)-x_\infty(\gamma)) + t\MCO_{H_{\MFP}} \left[ \left[t\right] \right]$. But $i_{\MFp}(x_\infty(\Omega)-x_\infty(\gamma))$ is a $\MFP$-unit, for otherwise, the images $\widetilde{P}$ and $\widetilde{Q}$ in $E(H_{\MFP})$ of the points $P:=(x_\infty(\Omega),y_\infty(\Omega))$ and $Q:=(x_\infty(\gamma),y_\infty(\gamma))$ would be such that $\widetilde{P}+\widetilde{Q}\in E_1(H_{\MFP})$ or $\widetilde{P}-\widetilde{Q}\in E_1(H_{\MFP})$. But the point $\widetilde{P}+\widetilde{Q}\in E_1(H_{\MFP})$ means that $\widetilde{P}+\widetilde{Q}=O$ or its coordinates are not $\MFP$-integral. This is impossible as one may check by using \cite[Chapter VII, Theorem 3.4]{Silverman92}(we recall that $\Omega\in\MFm^{-1}L$ and $\gamma\in\MFb^{-1}L$).\par
Now that we have proved that $\hat{\Gamma}(z+\Omega)$ belongs to $\MCO_{H_{\MFP}} \left[ \left[t\right] \right]^\times$ we have to consider $\delta(L,\MFb^{-1}L)$. By \cite[Th\'eor\`eme 4]{robert90a} we know that
\begin{equation*}
\delta(L,\MFb^{-1}L)\in\MBQ(g_n(L),g_n(\MFb^{-1}L)))_{n\in\{2,3\}}.
\end{equation*}
%$\hat{\Lambda}(t)$ belongs to $\MCO_{H_{\MFP}} \left[ \left[t\right] \right]$ with constant term the image by $i_{\MFp}$ of the product
%\begin{equation*}
%\prod_{\gamma\in\MCZ}x_\infty(\rho)-x_\infty(\gamma)
%\end{equation*}
But one sees immediately from (\ref{g2g3}) that $g_n(L)\in F$ for $n\in\{2,3\}$, and from (\ref{g2Lag3La}) that $g_n(\MFb^{-1}L)\in F$ for $n\in\{2,3\}$. The $12$-th power of $\delta(L,\MFb^{-1}L)$ is equal to
\[\frac{\GGD\left( L \right)^{N\MFb}}{\GGD\left(\MFb^{-1}L \right)}
=\GGD\left( L \right)^{N\MFb-1}\frac{\GGD\left( L \right)}{\GGD\left(\MFb^{-1}L \right)}.\]
By (\ref{discriminant}) and by our choice of $E$ the discriminant $\Delta(L)$ is a unit at all the primes of $F$ above $\MFp$. Moreover, it is well known that $\GGD\left( L \right)/\GGD\left(\MFb^{-1}L \right)$ belongs to $k(1)$, the Hilbert class field of $k$, and generates the ideal $\MFb\mathcal{O}_{k(1)}$. Since $\MFb$ is prime to $\MFp$ we deduce that $\delta(L,\MFb^{-1}L)$ is a unit at all the primes of $F$ above $\MFp$ too. This completes the proof of the proposition. 
\hfill $\square$ 
\end{proof}

%\begin{pr}\label{PsiOL1}
%Let $\rho\in \MBC$ be a primitive $\MFf$-torsion point of $\MBC/L$.
%For any ideal $\MFb$ prime to $6\MFp$, the $L$-elliptic function $\Psi_{\rho,\MFb}$ defined by
%$\Psi_{\rho,\MFb}(z):=\psi\left( z+\rho; L,\MFb^{-1}L \right)$ is such that 
%\[\hat{\Psi}_{\rho,\MFb}(t)\in \MCO_L \left[ \left[t\right] \right]^\times .\]
%\end{pr}
%\begin{proof} We already know that $\hat{\Psi}_{\rho,\MFb}(t)^{12}\in \MCO_L \left[ \left[t\right] %\right]^\times$. This is proved in many authors. See for instance \cite[chap II,\S 4.9 Proposition (i) page 72]{deshalit87}. 
%\hfill $\square$ 
%\end{proof}

\begin{cor}\label{PsiOL}
Let $\MFm\neq \MCO_k$ be an integral ideal of $\MCO_k$ prime to $\MFp$, and let $\Omega\in \MBC$ be a primitive $\MFm$-torsion point of $\MBC/L$.
For any ideals $\MFa$ and $\MFb$, such that $\MFa$ is prime to $6\MFm\MFf\MFp$ and $\MFb$ is prime to $6\MFa\MFm\MFf\MFp$, the function $\Psi_{\Omega,\MFa,\MFb}^L(z):=\psi\left( z+\Omega; \MFa^{-1}L,\MFa^{-1}\MFb^{-1}L \right)$ satisfies
\[\hat{\Psi}_{\Omega,\MFa,\MFb}^L(t)\in \MCO_{H_{\MFP}} \left[ \left[t\right] \right]^\times .\]
\end{cor}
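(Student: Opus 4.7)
The strategy is to reduce the statement to Proposition \ref{PsiOL1} applied to the Galois-conjugate curve $E^{\sigma_\MFa}$, where $\sigma_\MFa = (\MFa, F/k)$. This curve also satisfies (\ref{shimura}), and its associated lattice is $L_\MFa = \Lambda(\MFa, L)\MFa^{-1}L$; thus, up to the scaling factor $\mu := \Lambda(\MFa, L)$, the lattice $\MFa^{-1}L$ is the lattice of a Shimura elliptic curve, to which Proposition \ref{PsiOL1} applies directly. The transfer back to the formal parameter of $\widehat{E}$ is the main technical point.

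The first step is to use the homogeneity relation (\ref{psihomogene}) to rewrite
\begin{equation*}
\psi(z+\Omega;\MFa^{-1}L,\MFa^{-1}\MFb^{-1}L)=\psi\bigl(\mu z+\mu\Omega; L_\MFa,\MFb^{-1}L_\MFa\bigr),
\end{equation*}
and to verify that $\mu\Omega$ is a primitive $\MFm$-torsion point of $\MBC/L_\MFa$. The annihilator computation reduces, since $\MFa$ is prime to $\MFm$, to checking that $\{\alpha\in\MCO_k:\alpha\MFa\subset\MFm\}=\MFm$, which is immediate from $\MFa+\MFm=\MCO_k$. Since $\MFb$ is prime to $6\MFm\MFf\MFp$ and $E^{\sigma_\MFa}$ satisfies (\ref{shimura}), Proposition \ref{PsiOL1} applied to the data $L_\MFa$, $\mu\Omega$, $\MFb$ yields a unit power series $\hat{\Psi}_{\mu\Omega,\MFb}^{L_\MFa}(t')\in\MCO_{H_\MFP}[[t']]^{\times}$, where $t'$ is the formal parameter of $\widehat{E^{\sigma_\MFa}}$ and $F(E^{\sigma_\MFa}[\MFm])=H$ because the ray-class description of division fields is valid uniformly for every elliptic curve satisfying (\ref{shimura}).

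The main step, and the technical crux, is transferring this conclusion back to the formal parameter $t$ of $\widehat{E}$. The isogeny $\lambda_\MFa:E\to E^{\sigma_\MFa}$ is defined over $F$ and induces a morphism of formal groups $\hat{\lambda}_\MFa:\widehat{E}\to\widehat{E^{\sigma_\MFa}}$ with coefficients in $\MCO_{F_\MFP}$, whose derivative at $0$ is $\mu$ by (\ref{facteur}). Integrating the pullback identity $\hat{\lambda}_\MFa^{\ast}\hat{\omega}_{E^{\sigma_\MFa}}=\mu\hat{\omega}_E$ gives the identity of logarithms $\lambda_{E^{\sigma_\MFa}}\circ\hat{\lambda}_\MFa=\mu\lambda_E$. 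Taking Laurent expansions at $z=0$ of both sides of the homogeneity relation above and then substituting $z=\lambda_E(t)$ then leads to
\begin{equation*}
\hat{\Psi}_{\Omega,\MFa,\MFb}^L(t)=\hat{\Psi}_{\mu\Omega,\MFb}^{L_\MFa}\bigl(\hat{\lambda}_\MFa(t)\bigr).
\end{equation*}
Since $\hat{\lambda}_\MFa(t)\in t\MCO_{F_\MFP}[[t]]$, the right-hand side is a well-defined element of $\MCO_{H_\MFP}[[t]]$ whose constant term equals the unit $\hat{\Psi}_{\mu\Omega,\MFb}^{L_\MFa}(0)$; it is therefore itself a unit, which is the desired conclusion.
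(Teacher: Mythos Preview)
Your proof is correct, but it takes a genuinely different and much longer route than the paper's. The paper dispatches the corollary in one line via the multiplicativity formula
\[
\Psi_{\Omega,\MFa\MFb}^L(z)=\Psi_{\Omega,\MFa}^L(z)^{N(\MFb)}\,\Psi_{\Omega,\MFa,\MFb}^L(z),
\]
which is a special case of Robert's multiplicativity relation for the $\psi$-function. Since $\MFa$ and $\MFa\MFb$ are both prime to $6\MFm\MFf\MFp$, Proposition~\ref{PsiOL1} applies directly to $\Psi_{\Omega,\MFa}^L$ and to $\Psi_{\Omega,\MFa\MFb}^L$, giving that both $\hat\Psi_{\Omega,\MFa}^L$ and $\hat\Psi_{\Omega,\MFa\MFb}^L$ lie in $\MCO_{H_\MFP}[[t]]^\times$; the quotient $\hat\Psi_{\Omega,\MFa,\MFb}^L$ is then automatically a unit. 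No change of curve, no isogeny, no transfer of formal parameter is needed.

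Your approach---pass to $E^{\sigma_\MFa}$ via homogeneity, apply Proposition~\ref{PsiOL1} there, and pull back through $\hat\lambda_\MFa$---is conceptually natural and all the steps check out (the verification that $\mu\Omega$ is primitive $\MFm$-torsion for $L_\MFa$, that $F(E^{\sigma_\MFa}[\MFm])=H$, and the logarithm identity $\lambda_{E^{\sigma_\MFa}}\circ\hat\lambda_\MFa=\mu\lambda_E$ are all correct). One point you use implicitly is that $\hat\lambda_\MFa\in t\,\MCO_{F_\MFP}[[t]]$; this holds because $\lambda_\MFa$ extends to the N\'eron models over $\MCO_{F_\MFP}$ (both curves having good reduction at $\MFP$), but it deserves a word. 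The payoff of your method is that it isolates exactly why the extra ideal $\MFa$ is harmless: it corresponds to a Galois twist of the curve. The paper's method is quicker but hides this geometric content behind the algebraic identity for $\psi$.
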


%\begin{pr}\label{PsiOL}
%Let $\MFm\neq \MCO_k$ and $\MFa$ be two coprime ideals of $\MCO_k$, both prime to $\MFp$, 
%and let $\rho\in \MBC$ be such that $\rho$ is a primitive $\MFm$-torsion point of $\MBC/L$.
%For any ideal $\MFb$ prime to $6$, let $\Psi_{\rho,\MFa,\MFb}$ be the $\MFa^{-1}L$-elliptic function defined by
%\[\Psi_{\rho,\MFa,\MFb}(z)=\psi\left( z+\rho; \MFa^{-1}L,\MFa^{-1}\MFb^{-1}L \right).\]
%If $\MFb$ is prime to $6\MFp\MFa$ then 
%$\hat{\Psi}_{\rho,\MFa,\MFb}(t)\in \MCO_L \left[ \left[t\right] \right]^\times .$
%\end{pr}

\begin{proof} This is a direct consequence of Proposition \ref{PsiOL1} since we have the identity
\begin{equation}\label{multiplicativity}
\Psi_{\Omega,\MFa\MFb}^L(z)=\Psi_{\Omega,\MFa}^L(z)^{N(\MFb)}\Psi_{\Omega,\MFa,\MFb}^L(z),
\end{equation}
which is a particular case of the multiplicativity formula satisfied by Robert Functions, \cite[Corollaire 3]{robert90a}.
\hfill $\square$ 
\end{proof}
\section{Coleman series of elliptic units}\label{colemanseries}
Let $k^{ur}_{\MFp}$ be the maximal unramified extension of $k_{\MFp}:=k_{\MFP}$ in $\mathbb{C}_p$. As it is well known the group $\mathrm{Gal}(k^{ur}_{\MFp}/k_{\MFp})$ is topologically generated by the Frobenius automorphism $\Phi$, uniquely determined by the condition
\[\Phi(\xi)=\xi^p,  \]
for any root of unity $\xi$ of order prime to $p$. $\Phi$ is also caracterized by the fact that, for any integral element $x$ in $k^{ur}_{\MFp}$ the image $\Phi(x)$ is congruent to $x^p$ modulo the maximal ideal of $k^{ur}_{\MFp}$. Let us remark that the restriction of $\Phi$ to $F$ (actually to $i_{\MFp}(F)$) is just the Frobenius automorphism $\sigma_{\MFp}:=(\MFp, F/k)$ of $F/k$ at $\MFp$. As we have seen in the previous section, there is an isogeny $\lambda_{\MFp}: E\longrightarrow E^{\sigma_{\MFp}}$ from which we derive a morphism of formal groups
\[f_E: \widehat{E}\longrightarrow \widehat{E^{\sigma_{\MFp}}}.\]
In other words, $f_E\in\mathcal{O}_{F_{\MFP}}[[t]]$ and satisfies $f_E\circ\widehat{E}=\widehat{E^{\sigma_{\MFp}}}\circ f_E$. Further, the equality $\widehat{E^{\sigma_{\MFp}}}=\widehat{E}^{\Phi}$ and the fact that
\[f_E(t)\equiv\Lambda(\MFp,L)t\ \mathrm{modulo}\ t^2\mathcal{O}_{F_{\MFP}}[[t]]\quad\mathrm{and}\quad f_E(t)\equiv t^p\ \mathrm{modulo}\ \MFp_{F_{\MFP}}[[t]]\]
clearly shows that $\widehat{E}$ is a Lubin-Tate formal group relative to the extension $F_{\MFP}/k_{\MFp}$ as defined in \cite[Chapter I, \S 1.2 and 1.3]{deshalit87}. By \cite[Chapter I, Proposition 1.5]{deshalit87}, for any $a\in\mathcal{O}_{k_{\MFp}}$ there exists a unique power series $[a]_E\in\mathcal{O}_{F_{\MFP}}[[t]]$ such that 

\begin{eqnarray*}
 &(i)& [a]_E(t)\equiv at\ \mathrm{modulo}\ t^2\mathcal{O}_{F_{\MFP}}[[t]].\\
 &(ii)& [a]_E^{\Phi}\circ f_E=f_E\circ[a]_E.\\
 &(iii)& [a]_E\circ\widehat{E}=\widehat{E}\circ[a]_E.
\end{eqnarray*}
Moreover,
\begin{equation}\label{morphism}
\theta_{\MFp,E}([i_{\MFp}(a)]_E(z))=a.\theta_{\MFp,E}(z),\ \mathrm{for\ any}\ z\in\theta_{\MFp,E}^{-1}(E_1(\overline{\mathbb{Q}}))\ \mathrm{and\ any}\ a\in\MCO_k.
\end{equation}
To simplify notation we shall write $f$ instead of $f_E$ and $[a]$ for $[a]_E$. Let $W_f^n:=\{\alpha\in\MFp_{\mathbb{C}_p},\ [a](\alpha)=0,\ \mathrm{for\ all}\ a\in\MFp^n\}$. Then (\ref{morphism}) implies that
\[\theta_{\MFp,E}(W_f^n)=E[\MFp^n]\]
Hence if we let $F_n:=F(E[\MFp^n])$ then, as we have explained just before Proposition \ref{PsiOL1}, the field $F_n$ is equal to the ray class field $k(\MFf\MFp^n)$; and by the equality above  
\[ F_{\MFP}(W_f^n)=\widetilde{F}_n,\]
where $\widetilde{F}_n:=(F_n)_{\MFP}$ denote the completion of $i_{\MFp}(F_n)$ in $\mathbb{C}_p$. To go further we need to produce primitive $\MFp^n$-torsion points of the elliptic curve $E^{(n)}:=E^{\Phi^{-n}}$. We fix throughout this section
\begin{equation}\label{periodecomplexe}
\Omega\ \mathit{a\ primitive}\ \MFf-\mathit{torsion\ point\ of}\ \mathbb{C}/L.
\end{equation}
This assumption means that $L=\Omega\mathfrak{c}^{-1}\MFf$, where $\MFc$ is an ideal of $\MCO_k$ prime to $\MFf$. Let us also make the following hypothesis
\begin{equation}\label{periodecomplexe2}
The\ ideal\ \mathfrak{c}\ is\ prime\ to\ \MFp.
\end{equation}
Let $x\in\MFp$ be such that $x\equiv1$ modulo $\MFf$. Then $\Phi^{-n}=\sigma_{\MFa_n}=(\MFa_n,F/k)$, where $\MFa_n:=(x\MFp^{-1})^n$, and \[\rho_n:=\lambda_{\MFa_n}(\theta_{\infty,E}(\Omega))=\theta_{\infty,E^{(n)}}(\Lambda(\MFa_n,L)\Omega)\]
is a primitive $\MFf$-torsion point of $E^{(n)}=E^{\sigma_{\MFa_n}}$. Let
\[u_n:=\Lambda(\MFa_n,L)x^{-n}\Omega-\Lambda(\MFa_n,L)\Omega=\Lambda(\MFp^{-n},L)(1-x^n)\Omega.\] 
Then $\theta_{\infty,E^{(n)}}(u_n)$ is a primitive $\MFp^n$-torsion point of $E^{(n)}$ (the origine of $E$ if $n=0$). Let us remark that the lattice $L_{\mathfrak{a}_n}$ associated to $(E^{(n)},\omega_E^{\Phi^{-n}})$ is independant of $x$, and in fact we have
\[L_{\mathfrak{a}_n}=\Lambda(\MFp^{-n},L)\MFp^nL.\]
Moreover $u_n$ modulo $L_{\mathfrak{a}_n}$ is independant of $x$ too.\par
For any $n\in\mathbb{Z}$ and any $g\in\mathcal{O}_{k^{ur}_{\MFp}}[[t]]$ we denote $\Phi^{n}(g)$ or $g^{\Phi^n}$ the power series obtained by applying $\Phi^{n}$ to the coefficients of $g$. Also we set
%\[ W_f:=\bigcup_{n\in\mathbb{N}} W_f^n,\quad E[\MFp^{\infty}]:=\bigcup_{n\in\mathbb{N}}E[\MFp^n]\quad{and}\quad\]
\[\widehat{E}_{alg}:=\theta_{\MFp,E}^{-1}(E_1(\overline{\mathbb{Q}})). \]

\begin{lm}\label{baseTate} The elements $\pi_n:=\theta_{\MFp,E^{(n)}}^{-1}(\theta_{\infty,E^{(n)}}(u_n))$ ($\pi_0=0$) satisfy $\pi_n\in W_{\Phi^{-n}(f)}^n\backslash W_{\Phi^{-n}(f)}^{n-1}$ and $\Phi^{-n}(f)(\pi_n)=\pi_{n-1}$.
\end{lm}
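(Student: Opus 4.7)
The strategy is to transport the lemma, via the uniformizations $\theta_{\infty,E^{(n)}}$ and $\theta_{\MFp,E^{(n)}}$, into statements about multiplication maps on $\MBC/L_{\MFa_n}$; then everything reduces to the cocycle rule for $\GGL(\cdot,L)$ and the conditions $x\in\MFp$, $x\equiv1$ modulo $\MFf$.

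First I would rewrite the data in a symmetric form. From $\GGL(x^n\MCO_k,L)=x^n$ and the cocycle rule one gets $\GGL(\MFa_n,L)=x^n\GGL(\MFp^{-n},L)$; hence
\[
u_n=\GGL(\MFp^{-n},L)(1-x^n)\Omega,\qquad L_{\MFa_n}=\GGL(\MFp^{-n},L)\MFp^nL.
\]
Because $L=\Omega\MFc^{-1}\MFf$ with $\MFc$ prime to $\MFp\MFf$, the annihilator in $\MCO_k$ of $(1-x^n)\Omega$ modulo $\MFp^nL$ consists of the $a\in\MCO_k$ with $a(1-x^n)\in\MFp^n\MFf$. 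Since $1-x^n$ lies in $\MFf$ (from $x\equiv 1\bmod\MFf$) and is a $\MFp$-unit (from $x\in\MFp$), this annihilator is exactly $\MFp^n$. Consequently $u_n$ has order precisely $\MFp^n$ in $\MBC/L_{\MFa_n}$, so $\theta_{\MFp,E^{(n)}}(\pi_n)=\theta_{\infty,E^{(n)}}(u_n)$ is a primitive $\MFp^n$-torsion point of $E^{(n)}$. Together with the identification $\theta_{\MFp,E^{(n)}}(W_{\Phi^{-n}(f)}^n)=E^{(n)}[\MFp^n]$ (the analogue for the Lubin-Tate group $\widehat{E^{(n)}}$ and its Frobenius $f_{E^{(n)}}=\Phi^{-n}(f)$ of the displayed identity $\theta_{\MFp,E}(W_f^n)=E[\MFp^n]$), this yields $\pi_n\in W_{\Phi^{-n}(f)}^n\setminus W_{\Phi^{-n}(f)}^{n-1}$.

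For the recursion $\Phi^{-n}(f)(\pi_n)=\pi_{n-1}$, I would use that $\Phi^{-n}(f)$ is the Lubin-Tate Frobenius of $\widehat{E^{(n)}}$, corresponding under $\theta_{\MFp,E^{(n)}}$ to the isogeny $\lambda_{\MFp}^{\Phi^{-n}}\colon E^{(n)}\to E^{(n-1)}$; on the analytic side this isogeny is multiplication by $\GGL(\MFp,L_{\MFa_n})=\GGL(\MFp,L)^{\Phi^{-n}}$ by (\ref{lambdala}). A short calculation with the cocycle rule (using again $\GGL(x^n\MCO_k,L)=x^n$) gives
\[
\GGL(\MFp,L)^{\sigma_{\MFa_n}}\GGL(\MFp^{-n},L)=\GGL(\MFp^{-(n-1)},L),
\]
so $\GGL(\MFp,L_{\MFa_n})u_n=\GGL(\MFp^{-(n-1)},L)(1-x^n)\Omega$. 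This differs from $u_{n-1}=\GGL(\MFp^{-(n-1)},L)(1-x^{n-1})\Omega$ by $\GGL(\MFp^{-(n-1)},L)x^{n-1}(1-x)\Omega$, which lies in $L_{\MFa_{n-1}}=\GGL(\MFp^{-(n-1)},L)\MFp^{n-1}L$ because $x^{n-1}\in\MFp^{n-1}$ and $1-x\in\MFf$. Applying $\theta_{\infty,E^{(n-1)}}$ and then $\theta_{\MFp,E^{(n-1)}}^{-1}$ gives the desired identity.

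The main obstacle is purely bookkeeping: correctly interpreting the action of $\Phi^{-n}$ both formally (as the twist $f\mapsto\Phi^{-n}(f)$ and the passage from $\widehat{E}$ to $\widehat{E^{(n)}}$) and analytically (as the change of lattice $L\mapsto L_{\MFa_n}$), and chasing the cocycle identities without dropping the factor $x^n$ arising from $\GGL(\MFa_n,L)=x^n\GGL(\MFp^{-n},L)$.
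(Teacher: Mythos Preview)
Your argument is correct and follows the same route as the paper: both reduce the recursion to the commutative square linking $\Phi^{-n}(f)$ on the formal side with the isogeny $\lambda_{\MFp}^{\Phi^{-n}}\colon E^{(n)}\to E^{(n-1)}$ on the analytic side, and then check that this isogeny carries $\theta_{\infty,E^{(n)}}(u_n)$ to $\theta_{\infty,E^{(n-1)}}(u_{n-1})$. The paper simply asserts this last step and the primitivity of $u_n$ (the latter is noted in the text just before the lemma), whereas you spell out the lattice computation via the cocycle rule; one small slip is that the annihilator condition should read $a(1-x^n)\in\MFp^n\MFc^{-1}\MFf$ rather than $\MFp^n\MFf$, but since $\MFc$ is prime to $\MFp$ this does not affect your conclusion.
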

\begin{proof} The isogeny $\lambda_{\MFp}^{\Phi^{-n}}:E^{(n)}\longrightarrow E^{(n-1)}$ sends $\theta_{\infty,E^{(n)}}(u_n)$ to $\theta_{\infty,E^{(n-1)}}(u_{n-1})$ and induce the following commutative diagramm
\begin{equation*}
\begin{CD}
(\widehat{E^{(n)}})_{alg}@>\theta_{\MFp,E^{(n)}}>>E^{(n)}_1(\overline{\mathbb{Q}}) \\
@V\Phi^{-n}(f)VV  @VV\lambda_{\MFp}^{\Phi^{-n}}V\\
(\widehat{E^{(n-1)}})_{alg}@>\theta_{\MFp,E^{(n-1)}}>>E^{(n-1)}_1(\overline{\mathbb{Q}}). 
\end{CD}
\end{equation*}
This implies the equality $\Phi^{-n}(f)(\pi_n)=\pi_{n-1}$.\hfill $\square$
\end{proof}
As proved in \cite[Chapter I, Theorem 2.2, page 13]{deshalit87}, for any norm-coherent sequence $\beta=(\beta_n)$ with $\beta_n$ in ($\mathcal{O}_{\widetilde{F}_n}^\times$), there exists a unique $F_{\Omega,\beta}\in\MCO_{F_\MFP}\left[\left[t\right]\right]^\times$ 
such that for any integer $n\geq1$,
\begin{equation}\label{Coleman}
\Phi^{-n}(F_{\Omega,\beta})\left(\pi_n \right) = \beta_n.
\end{equation} 
\begin{pr}
\label{psicoleman1} Let $\MFb$ be an ideal of $\MCO_k$ prime to $6\MFf\MFp$. Then the Coleman power series associated to the projective system of elliptic units $v:=(i_\MFp\left( \psi\left(\Omega;\MFp^{n}L,\MFb^{-1}\MFp^{n}L  \right) \right)_{n\geq1}$ is $F_{\Omega,v}=\hat{\Psi}^L_{\Omega,\MFb}$.
%Let $v\in\varprojlim_n\MCO^\times_{\widetilde{F}_n}$ be the projective system of elliptic units such that
%\[v_n= i_\MFp\left( \psi\left(\Omega;\MFp^{n}L,\MFb^{-1}\MFp^{n}L  \right) \right).\]
%$F_u=\hat{\Psi}_{\Omega,\MFb}$.
\end{pr}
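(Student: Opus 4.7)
The plan is to invoke the characterization (\ref{Coleman}) together with the uniqueness of the Coleman power series. Applying Proposition \ref{PsiOL1} with $\MFm=\MFf$ (so that $H=F(E[\MFf])=k(\MFf)=F$) gives $\hat{\Psi}^L_{\Omega,\MFb}\in\MCO_{F_\MFP}[[t]]^\times$, and since $F_\MFP/k_\MFp$ is unramified the ring $\MCO_{F_\MFP}[[t]]^\times$ is $\Phi$-stable. It therefore suffices to verify the point-value identity
\[
\Phi^{-n}(\hat{\Psi}^L_{\Omega,\MFb})(\pi_n)=i_\MFp\bigl(\psi(\Omega;\MFp^n L,\MFb^{-1}\MFp^n L)\bigr)
\]
for every $n\geq1$.

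The heart of the argument will be to identify $\Phi^{-n}(\hat{\Psi}^L_{\Omega,\MFb})$ with $C_{E^{(n)}}(\tilde{f}_n)$, where
\[
\tilde{f}_n(w):=\psi\!\left(w+\Lambda(\MFa_n,L)\Omega;\ L_{\MFa_n},\ \MFb^{-1}L_{\MFa_n}\right)
\]
is an $L_{\MFa_n}$-elliptic function. This amounts to two compatibilities. First, $\tilde{f}_n$ coincides, as a function on $E^{(n)}$, with the $\sigma_{\MFa_n}$-conjugate of $\Psi^L_{\Omega,\MFb}$: at a generic torsion point $P=\theta_{\infty,E}(w)$ of order prime to $\MFa_n$, (\ref{psigalois}) gives $\Psi^L_{\Omega,\MFb}(w)^{\sigma_{\MFa_n}}=\psi(w+\Omega;\MFa_n^{-1}L,\MFa_n^{-1}\MFb^{-1}L)$, and (\ref{psihomogene}) with scaling factor $\Lambda(\MFa_n,L)$, together with the identity $P^{\sigma_{\MFa_n}}=\theta_{\infty,E^{(n)}}(\Lambda(\MFa_n,L)w)$ coming from (\ref{caracterisation}) and the commutative diagram following (\ref{facteur}), rewrites this value as $\tilde{f}_n$ evaluated at $\theta_{\infty,E^{(n)}}^{-1}(P^{\sigma_{\MFa_n}})$. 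Second, the map $C_E$ is functorial under Galois twisting: since $C_E(x_\infty)=t^{-2}a(t)$ and $C_E(y_\infty)=-t^{-3}a(t)$, and since $a^{\Phi^{-n}}$ is the analogous series built from the Weierstrass model of $E^{(n)}$, one obtains $\Phi^{-n}\circ C_E=C_{E^{(n)}}\circ(\cdot)^{\sigma_{\MFa_n}}$ on $\overline{\MBQ}(\wp_L,\wp_L')$.

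Once this identification is established, the remaining steps are direct. The point $\theta_{\infty,E^{(n)}}(u_n)$ is a primitive $\MFp^n$-torsion point of $E^{(n)}$, and since $E^{(n)}$ has good ordinary reduction at $\MFp$ it lies in $E^{(n)}_1(\overline{\MBQ})$. A short annihilator computation (using that $\Omega$ is primitive $\MFf$-torsion of $\MBC/L$ with $\gcd(\MFf,\MFp)=1$ and that $\MFb$ is prime to $\MFp$) shows that $u_n$ is not in the support of the divisor of $\tilde{f}_n$, so (\ref{vivelesabeilles}) applies and yields $C_{E^{(n)}}(\tilde{f}_n)(\pi_n)=i_\MFp(\tilde{f}_n(u_n))$. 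Finally, from $u_n+\Lambda(\MFa_n,L)\Omega=\Lambda(\MFp^{-n},L)\Omega$ and $L_{\MFa_n}=\Lambda(\MFp^{-n},L)\MFp^n L$, the homogeneity (\ref{psihomogene}) gives $\tilde{f}_n(u_n)=\psi(\Omega;\MFp^n L,\MFb^{-1}\MFp^n L)$, which is exactly what is needed.

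The main obstacle will be the Galois-compatibility step above. Both ingredients — the transformation law of $\Psi^L_{\Omega,\MFb}$ under $\sigma_{\MFa_n}$ and the functoriality of $C_E$ — are in some sense formal, but they require careful bookkeeping of the complex scaling factor $\Lambda(\MFa_n,L)$ relating $L$ and $L_{\MFa_n}$, and of the fields in which the Weierstrass coefficients of $E$ and $E^{(n)}$ live.
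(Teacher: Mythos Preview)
Your proposal is correct and follows essentially the same route as the paper's proof: apply Proposition~\ref{PsiOL1} to place $\hat{\Psi}^L_{\Omega,\MFb}$ in $\MCO_{F_\MFP}[[t]]^\times$, identify $\Phi^{-n}(\hat{\Psi}^L_{\Omega,\MFb})$ with $\hat{\Psi}^{L_{\MFa_n}}_{\Omega_n,\MFb}$ where $\Omega_n=\Lambda(\MFa_n,L)\Omega$ (your $\tilde{f}_n$), and then evaluate at $\pi_n$ via (\ref{vivelesabeilles}) and (\ref{psihomogene}). The paper dispatches the Galois-compatibility step in one line by invoking (\ref{caracterisation}) after choosing $x$ so that $\MFa_n$ is prime to $\MFb$ (so that the relevant torsion points all have order prime to $\MFa_n$); your ``generic torsion point of order prime to $\MFa_n$'' achieves the same thing, and the paper also records separately that the $v_n$ are global units of $k(\MFf\MFp^n)$, which you take for granted.
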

\begin{proof} Let us first remark that for any $n\geq1$
\[\psi\left(\Omega;\MFp^{n}L,\MFb^{-1}\MFp^{n}L  \right)= \psi\left(1;\MFc^{-1}\MFp^{n}\MFf,\MFc^{-1}\MFb^{-1}\MFp^{n}\MFf  \right)=\psi\left(1;\MFp^{n}\MFf,\MFb^{-1}\MFp^{n}\MFf  \right)^{(\MFc, k(\MFp^n\MFf)/k)},\]
is a unit of $k(\MFp^n\MFf)$ as is well known. Moreover, the power series $\hat{\Psi}_{\Omega,\mathfrak{b}}^L$ lies in $\MCO_{F_\MFP}\left[\left[t\right]\right]^\times$, thanks to Proposition \ref{PsiOL1}. Further, if we choose $x$ such that $\MFa_n$ is prime to $\MFb$ then the definition of Robert $\psi$-functions and (\ref{caracterisation}) give the equality  $\Phi^{-n}(\hat{\Psi}_{\Omega,\mathfrak{b}}^L)=\hat{\Psi}_{\Omega_n,\mathfrak{b}}^{L_{\mathfrak{a}_n}}$, where $\Omega_n:=\Lambda(\MFa_n,L)\Omega$. Now an easy computation using (\ref{vivelesabeilles}) gives
\[\hat{\Psi}_{\Omega_n,\mathfrak{b}}^{L_{\mathfrak{a}_n}}(\pi_n)=i_\MFp(\psi(\Omega;\mathfrak{p}^nL,\mathfrak{b}^{-1}\mathfrak{p}^nL)).\]  
\hfill $\square$
\end{proof}
\begin{pr}
\label{psicoleman}
Let $\MFa$ be an ideal of $\MCO_k$, prime to $6\MFf\MFp$. Let $\MFb$ be an ideal of $\MCO_k$ prime to $6\MFa\MFf\MFp$.
Then the Coleman power series associated to the projective system of elliptic units $v:=(i_\MFp\left( \psi\left(\Omega;\MFa^{-1}\MFp^{n}L,\MFb^{-1}\MFa^{-1}\MFp^{n}L  \right) \right)_n$ is $F_{\Omega,v}=\hat{\Psi}^L_{\Omega,\MFa,\MFb}$
\end{pr}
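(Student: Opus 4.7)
The plan is to reduce to Proposition \ref{psicoleman1} by means of Robert's multiplicativity identity (\ref{multiplicativity}), in the same spirit as the proof of Corollary \ref{PsiOL}. First, I would substitute $\MFp^{n}L$ for $L$ in (\ref{multiplicativity}) and evaluate at $z=0$; all three $\psi$-values are units of $k(\MFp^n\MFf)$ by the argument opening the proof of Proposition \ref{psicoleman1}, applied with the ideals $\MFa\MFb$ and $\MFa$ respectively (both of which are prime to $6\MFf\MFp$). Applying $i_\MFp$ and setting
\[
v'_n := i_\MFp\bigl(\psi(\Omega;\MFp^{n}L,\MFa^{-1}\MFb^{-1}\MFp^{n}L)\bigr),\qquad
v''_n := i_\MFp\bigl(\psi(\Omega;\MFp^{n}L,\MFa^{-1}\MFp^{n}L)\bigr),
\]
and writing $v_n$ for the $n$-th term of the target projective system, this yields $v'_n = (v''_n)^{N\MFb}\,v_n$ for every $n\geq 1$.

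Next, Proposition \ref{psicoleman1} applies directly to $v'$ (with ideal $\MFa\MFb$) and to $v''$ (with ideal $\MFa$), showing that both are norm-coherent sequences of local units whose Coleman series are $F_{\Omega,v'}=\hat{\Psi}^L_{\Omega,\MFa\MFb}$ and $F_{\Omega,v''}=\hat{\Psi}^L_{\Omega,\MFa}$. The factorization from the previous step then shows that $v$ is itself norm-coherent, and the uniqueness built into (\ref{Coleman}), together with the multiplicative nature of the interpolation condition there, forces
\[
F_{\Omega,v} \;=\; \frac{\hat{\Psi}^L_{\Omega,\MFa\MFb}}{(\hat{\Psi}^L_{\Omega,\MFa})^{N\MFb}};
\]
this quotient lies in $\MCO_{F_\MFP}[[t]]^\times$ because both factors do (Proposition \ref{PsiOL1}, applied with $\MFm=\MFf$; note $H=F(E[\MFf])=F$ since $w_\MFf=1$).

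Finally, since $C_E$ is a ring homomorphism, applying it to (\ref{multiplicativity}) itself, at the level of the original lattice $L$, gives $\hat{\Psi}^L_{\Omega,\MFa\MFb}=(\hat{\Psi}^L_{\Omega,\MFa})^{N\MFb}\,\hat{\Psi}^L_{\Omega,\MFa,\MFb}$; inserting this into the previous display yields $F_{\Omega,v}=\hat{\Psi}^L_{\Omega,\MFa,\MFb}$, as required. I do not expect any real obstacle: the only checks are that (\ref{multiplicativity}) specializes from $L$ to $\MFp^{n}L$ (immediate as an identity of meromorphic functions of $z$) and that Coleman series pass through ratios (immediate from the uniqueness in (\ref{Coleman})).
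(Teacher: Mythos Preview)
Your proof is correct and rests on the same key ingredient as the paper's: Robert's multiplicativity identity (\ref{multiplicativity}), used to reduce to Proposition \ref{psicoleman1}. The organization differs slightly: the paper applies (\ref{multiplicativity}) to transport the relation $\Phi^{-n}(\hat{\Psi}^L_{\Omega,\MFc})=\hat{\Psi}^{L_{\MFa_n}}_{\Omega_n,\MFc}$ (established in the proof of Proposition \ref{psicoleman1}) from the single-ideal functions to $\hat{\Psi}^L_{\Omega,\MFa,\MFb}$, and then checks the interpolation condition $\Phi^{-n}(\hat{\Psi}^L_{\Omega,\MFa,\MFb})(\pi_n)=v_n$ directly via (\ref{vivelesabeilles}). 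You instead invoke Proposition \ref{psicoleman1} twice as a black box for $v'$ and $v''$, and then exploit the multiplicativity of the Coleman map $\beta\mapsto F_{\Omega,\beta}$ (immediate from uniqueness in (\ref{Coleman})) together with $C_E$ applied to (\ref{multiplicativity}). Your route is marginally cleaner in that it never reopens the $\Phi^{-n}$ computation; the paper's route has the small advantage of exhibiting the transformation law $\Phi^{-n}(\hat{\Psi}^L_{\Omega,\MFa,\MFb})=\hat{\Psi}^{L_{\MFa_n}}_{\Omega_n,\MFa,\MFb}$ explicitly, which is of independent interest. Either way, the content is the same.
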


\begin{proof} From the multiplicativity formula (\ref{multiplicativity}) and the fact that $\Phi^{-n}(\hat{\Psi}_{\Omega,\mathfrak{c}}^L)=\hat{\Psi}_{\Omega_n,\mathfrak{c}}^{L_{\mathfrak{a}_n}}$ for any integral ideal $\MFc$ prime to $6\MFp$ we deduce the relation
\[\Phi^{-n}(\hat{\Psi}_{\Omega,\mathfrak{a},{b}}^L)=\hat{\Psi}_{\Omega_n,\mathfrak{a},\MFb}^{L_{\mathfrak{a}_n}} .\] 
The computation rule (\ref{vivelesabeilles}) gives 
\[\hat{\Psi}_{\Omega_n,\mathfrak{a},\MFb}^{L_{\mathfrak{a}_n}}(\pi_n)=i_\MFp\left( \psi\left(\Omega;\MFa^{-1}\MFp^{n}L,\MFb^{-1}\MFa^{-1}\MFp^{n}L\right) \right).\]
\hfill $\square$ 
\end{proof}
In this last part of section \ref{colemanseries} we insert a lemma that will be used in the proof of Proposition \ref{lemchiuprimeyop}. Let $d\in\MCO_k$ be prime to $\MFf\MFp$. Then $\Omega':=d\Omega$ is also a primitive $\MFf$-torsion point of $\MBC/L$ satisfying the points 1. and 2. above. Moreover, the corresponding $u'_n:= \Lambda(\MFp^{-n},L)(1-x^n)\Omega'$ is nothing but $du_n$. Therefore
\[\theta_{\infty,E^{(n)}}(u_n')=d.\theta_{\infty,E^{(n)}}(u_n)=d.\theta_{\MFp,E^{(n)}}(\pi_n)=\theta_{\MFp,E^{(n)}}([d]_{E^{(n)}}(\pi_n)),\]
by the definition of $\pi_n$ and (\ref{morphism}). Thus if we define $\pi'_n$ by $\theta_{\MFp,E^{(n)}}(\pi'_n)=\theta_{\infty,E^{(n)}}(u_n')$ we see that
\begin{equation}\label{modificationbase}
\pi'_n=[d]_{E^{(n)}}(\pi_n).
\end{equation}
Since $[d]_{E^{(n)}}=\Phi^{-n}([d]_E)$ we obtain the following result
\begin{lm}\label{changementdeperiode} Let $d\in\MCO_k$ be prime to $6\MFf\MFp$. Let $\MFa$ be an ideal of $\MCO_k$, prime to $6\MFf\MFp$. Let $\MFb$ be an ideal of $\MCO_k$ prime to $6d\MFa\MFf\MFp$. Then
\[\hat{\Psi}^L_{\Omega,d\MFa,\MFb}=\hat{\Psi}^L_{d\Omega,\MFa,\MFb}\circ[d]_E.\]
\end{lm}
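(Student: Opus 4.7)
The plan is to reduce the identity to the homogeneity relation (\ref{psihomogene}) satisfied by Robert's $\psi$-function, and then transfer it from the complex uniformization to the formal parameter via the dictionary (\ref{lambdaetchapeau}).

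First, I apply (\ref{psihomogene}) with $\GGl = d^{-1}$, $L \mapsto \MFa^{-1}L$ and $L' \mapsto \MFa^{-1}\MFb^{-1}L$, using $d^{-1}\MFa^{-1} = (d\MFa)^{-1}$ as fractional $\MCO_k$-ideals, to get
\[
\psi\bigl(d^{-1}w;\,(d\MFa)^{-1}L,\,\MFb^{-1}(d\MFa)^{-1}L\bigr) \;=\; \psi\bigl(w;\,\MFa^{-1}L,\,\MFa^{-1}\MFb^{-1}L\bigr)
\]
for $w\in\MBC$ outside the pole set. Substituting $w = dz + d\Omega$ yields the identity of $L$-elliptic functions (both sides are periodic under $(d\MFa)^{-1}L\supseteq L$)
\[
\Psi^L_{\Omega, d\MFa, \MFb}(z) \;=\; \Psi^L_{d\Omega, \MFa, \MFb}(dz),
\]
both members lying in $\overline{\MBQ}(\wp_L,\wp_L')$. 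Taking Laurent expansions at $z=0$ gives $P(\Psi^L_{\Omega,d\MFa,\MFb})(z) = P(\Psi^L_{d\Omega,\MFa,\MFb})(dz)$.

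Next, I translate this identity into a relation between power series in $t$ via (\ref{lambdaetchapeau}). That formula yields
\[
\hat{\Psi}^L_{\Omega, d\MFa, \MFb}(t) \;=\; P(\Psi^L_{\Omega, d\MFa, \MFb})(\lambda(t)) \;=\; P(\Psi^L_{d\Omega, \MFa, \MFb})(d\lambda(t)),
\]
where $\lambda\in t + t^2 F_\MFP[[t]]$ is the logarithm of $\widehat{E}$. The key intertwining identity $\lambda\circ[d]_E = d\lambda$ in $F_\MFP[[t]]$ follows because both sides are homomorphisms from $\widehat{E}$ to the additive formal group with derivative $d$ at the origin, and such a homomorphism is uniquely determined by its derivative at $0$ in characteristic zero. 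Consequently
\[
P(\Psi^L_{d\Omega, \MFa, \MFb})(d\lambda(t)) \;=\; P(\Psi^L_{d\Omega, \MFa, \MFb})(\lambda([d]_E(t))) \;=\; \hat{\Psi}^L_{d\Omega, \MFa, \MFb}([d]_E(t)),
\]
which is the asserted equality.

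The only non-mechanical ingredient is the identification $\lambda\circ[d]_E = d\lambda$; everything else is a direct combination of (\ref{psihomogene}) and (\ref{lambdaetchapeau}). As an alternative route, more in the spirit of the paragraph preceding the lemma, one could invoke uniqueness of Coleman power series: by Proposition \ref{psicoleman} the homogeneity computation above shows that $\Phi^{-n}$ applied to either $\hat{\Psi}^L_{\Omega, d\MFa, \MFb}$ or $\hat{\Psi}^L_{d\Omega, \MFa, \MFb}\circ[d]_E$ yields the same value $i_\MFp\bigl(\psi(\Omega;(d\MFa)^{-1}\MFp^n L, \MFb^{-1}(d\MFa)^{-1}\MFp^n L)\bigr)$ when evaluated at $\pi_n$, using $\pi'_n = \Phi^{-n}([d]_E)(\pi_n)$ from (\ref{modificationbase}) for the second series; since both series lie in $\MCO_{F_\MFP}[[t]]^\times$ (the composition stays in this group because $[d]_E\in t\MCO_{F_\MFP}[[t]]$ with leading coefficient $d$), the uniqueness clause of \cite[Chap.~I, Thm.~2.2]{deshalit87} forces equality.
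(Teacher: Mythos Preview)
Your proof is correct. The direct argument via the homogeneity relation (\ref{psihomogene}) combined with the logarithm identity $\lambda\circ[d]_E = d\lambda$ is a clean, self-contained route that avoids any appeal to Coleman theory. The paper instead proceeds exactly along your ``alternative route'': it uses the homogeneity of $\psi$ only to identify the two norm-coherent systems of elliptic units at each finite level, then invokes Proposition~\ref{psicoleman} together with (\ref{modificationbase}) and the uniqueness of Coleman power series to conclude. So your main argument is genuinely different and more elementary---it works entirely at the level of Laurent expansions and formal group logarithms, and never needs the Coleman machinery or the evaluation at the $\pi_n$. The paper's approach, on the other hand, fits naturally into the narrative of \S\ref{colemanseries}, since the surrounding discussion has already set up the tower $(\pi_n)$, $(\pi_n')$ and the identity (\ref{modificationbase}) precisely for this purpose.
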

\begin{proof} Use the equality
\[\psi\left(\Omega;(d\MFa)^{-1}\MFp^{n}L,\MFb^{-1}(d\MFa)^{-1}\MFp^{n}L  \right)=\psi\left(d\Omega;\MFa^{-1}\MFp^{n}L,\MFb^{-1}\MFa^{-1}\MFp^{n}L  \right),\]
Proposition \ref{psicoleman} and (\ref{modificationbase}).\hfill$\square$
\end{proof}

\section{Logarithmic derivative of Coleman series}\label{logdercolser}

By the classical theory of formal groups we know that any translation-invariant derivation on $\widehat{E}(t,w)$ over $\mathcal{O}_{F_{\MFP}}$ (resp. $F_{\MFP}$) has the form $D_c=\frac{c}{\lambda'(t)}\frac{d}{dt}$, where $c\in\mathcal{O}_{F_{\MFP}}$ (resp. $c\in F_{\MFP}$). Let $D:=D_1$ then from (\ref{lambdaetchapeau}) we deduce that for any $g\in\overline{\MBQ}\left(\wp_L,\wp_L'\right)$ such that $\widehat{g}\in F_{\MFP}[[t]]$ we have 
\[D(\widehat{g})=\widehat{g'}.\]
Let $\MCU_{\MFf,n}(\MFP)$ be the pro-$p$-part of $\MCO_{\widetilde{F}_n}^\times$. Taking the projective limit under the norm maps, we define 
\[\MCU_{\MFf,\infty }(\MFP):=\varprojlim_n\MCU_{\MFf,n}(\MFP).\]
Recall that for $u\in\MCU_{\MFf,\infty }(\MFP)$, the Coleman power series $F_u$ satisfies $F_u(0)\equiv1$ modulo $\MFp_{F_{\MFP}}$ so that $\log\left( F_u \right)$ is well defined and has coefficients in $F_{\MFP}$. Moreover, if $F_u=\widehat{g}$, for some $g\in\overline{\MBQ}\left(\wp_L,\wp_L'\right)$ then

\[ D(\log\widehat{g})=\frac{\widehat{g'}}{\widehat{g}}.\]

To obtain $p$-adic $L$-functions we first construct $\mathcal{O}_{\mathbf D}$ valued measures, where ${\mathbf D}$ is the completion of $k_{\MFp}^{ur}$. Therefore, as Ehud de Shalit did in \cite[Lemma page 18]{deshalit87}, we have to consider the power series
\[\widetilde{\log}\,F_u:=\log F_u-\frac{1}{p}\sum_{\omega\in W^1_f}\log F_u(t[+]\omega)=\log F_u-\frac{1}{p}\log (F_u^{\Phi}\circ f),\]
where $t[+]\omega:=\widehat{E}(t,\omega)$. The second equality follows from the definition of the norm operator of Coleman given for intance in \cite[Proposition 2.1 page 11]{deshalit87}, usually denoted by $\mathcal{N}_f$ and the identity $\mathcal{N}_f(F_u)=F_u^{\Phi}$ proved in \cite[Corollary 2.3 page 14]{deshalit87}. The series $\widetilde{\log}\,F_u\in F_{\MFP}[[t]]$. But one immediately sees that  $\widetilde{\log}\,F_u\in\mathcal{O}_{F_{\MFP}}[[t]]$ thanks to the second equality and to the fact that $h^p\equiv h^{\Phi}\circ f$ modulo $\MFp_{F_{\MFP}}$ for any $h\in\mathcal{O}_{F_{\MFP}}[[t]]$. Since $\lambda^{\Phi}\circ f=\Lambda(\MFp,L)\lambda$ we see from above that if $F_u=\widehat{g}$ then
\[D(\widetilde{\log}\,\widehat{g})=\frac{\widehat{g'}}{\widehat{g}}-\frac{\Lambda(\MFp,L)}{p}\Bigl(\frac{\widehat{g'}^{\Phi}\circ f}{\widehat{g}^{\Phi}\circ f}\Bigr).\]

But since $h^p\equiv h^{\Phi}\circ f$ modulo $\MFp_{F_{\MFP}}$ for any $h\in\mathcal{O}_{F_{\MFP}}[[t]]$ we obtain
\begin{cor}\label{antecedent} Suppose $F_u=\widehat{g}$, for some $g\in\overline{\MBQ}\left(\wp_L,\wp_L'\right)$ then
\[D(\widetilde{\log}\,\widehat{g})\equiv\frac{\widehat{g'}}{\widehat{g}}-\frac{\Lambda(\MFp,L)}{p}\Bigl(\frac{\widehat{g'}}{\widehat{g}}\Bigr)^p\quad\mathit{modulo}\quad\MFp_{F_{\MFP}}[[t]].\]
\end{cor}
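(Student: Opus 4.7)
My plan is to derive the stated congruence by direct substitution into the exact identity
\[D(\widetilde{\log}\,\widehat{g})=\frac{\widehat{g'}}{\widehat{g}}-\frac{\Lambda(\MFp,L)}{p}\cdot\frac{\widehat{g'}^{\Phi}\circ f}{\widehat{g}^{\Phi}\circ f}\]
established in the paragraph immediately preceding the statement. Setting $h:=\widehat{g'}/\widehat{g}$, the second fraction equals $(\widehat{g'}/\widehat{g})^{\Phi}\circ f=h^{\Phi}\circ f$, so it is enough to replace $h^{\Phi}\circ f$ by $h^{p}$ modulo $\MFp_{F_{\MFP}}[[t]]$ via the congruence $h^{p}\equiv h^{\Phi}\circ f\pmod{\MFp_{F_{\MFP}}[[t]]}$ valid for any $h\in\MCO_{F_{\MFP}}[[t]]$ that is recalled in the same paragraph.

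I would first check that $h\in\MCO_{F_{\MFP}}[[t]]$, so that the congruence applies. This is immediate: $F_{u}=\widehat{g}$ is a unit of $\MCO_{F_{\MFP}}[[t]]$, hence $\widehat{g}^{-1}\in\MCO_{F_{\MFP}}[[t]]$; and $\widehat{g'}=D(\widehat{g})$ lies in $\MCO_{F_{\MFP}}[[t]]$ because $D=(1/\lambda'(t))\,d/dt$ stabilizes $\MCO_{F_{\MFP}}[[t]]$, the factor $\lambda'(t)$ being a unit there as recalled in Section \ref{Robert}. Hence $h\in\MCO_{F_{\MFP}}[[t]]$ and the congruence is available.

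The only delicate bookkeeping is to ensure that the prefactor $\Lambda(\MFp,L)/p$ does not destroy the congruence. Since $p$ splits in $k$ and $\MFf$ is prime to $\MFp$, the extension $F_{\MFP}/\MBQ_{p}$ is unramified, so $\MFp_{F_{\MFP}}=p\,\MCO_{F_{\MFP}}$. The relations $f(t)\equiv\Lambda(\MFp,L)\,t\pmod{t^{2}}$ and $f(t)\equiv t^{p}\pmod{\MFp_{F_{\MFP}}[[t]]}$ recalled in Section \ref{colemanseries} (and used there to identify $\widehat{E}$ as a Lubin--Tate formal group) force $\Lambda(\MFp,L)\in\MFp_{F_{\MFP}}=p\,\MCO_{F_{\MFP}}$, and therefore $\Lambda(\MFp,L)/p\in\MCO_{F_{\MFP}}$. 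Consequently
\[\frac{\Lambda(\MFp,L)}{p}\bigl(h^{\Phi}\circ f-h^{p}\bigr)\in\MCO_{F_{\MFP}}\cdot\MFp_{F_{\MFP}}[[t]]\subseteq\MFp_{F_{\MFP}}[[t]],\]
which yields the stated congruence. There is no real obstacle: all the substantive work has been done in the paragraph preceding the corollary, and what remains is the single substitution together with this small integrality check.
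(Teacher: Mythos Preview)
Your proof is correct and follows exactly the same approach as the paper: apply the congruence $h^{p}\equiv h^{\Phi}\circ f\pmod{\MFp_{F_{\MFP}}[[t]]}$ to the exact identity for $D(\widetilde{\log}\,\widehat{g})$ derived just before the corollary. The paper treats this as a one-line consequence, while you have usefully made explicit the two integrality checks (that $\widehat{g'}/\widehat{g}\in\MCO_{F_{\MFP}}[[t]]$ and that $\Lambda(\MFp,L)/p\in\MCO_{F_{\MFP}}$) which the paper leaves to the reader; the latter is in fact stated later in \S\ref{themeasure}, where $\Lambda(\MFp,L)/p$ is observed to be a unit.
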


The field of rational functions on $E$ over $F$ is $F(x_\infty,y_\infty)$ with $x_\infty$ and $y_\infty$ satisfying (\ref{equacourbe}). Let $\Omega$, $\MFa$ and $\MFb$ be as in Corollary \ref{PsiOL} with $\MFm=\MFf$ and let $V:=\Psi^L_{\Omega,\MFa,\MFb}$. Then by using the equality $(x_\infty, y_\infty)=\theta_{\infty, E}(z)$ we see that
\[\mathcal{R}^L_{\Omega,\MFa,\MFb}:=\Bigr
(\frac{V'}{V}-\frac{\Lambda(\MFp,L)}{p}\Bigl(\frac{V'}{V}\Bigr)^p\Bigr)\circ\theta_{\infty, E}^{-1}\]
is a rational function on $E/F$. We draw the attention of the reader that it is necessary to take $\MFm=\MFf$ to be sure that $\mathcal{R}^L_{\Omega,\MFa,\MFb}$ is defined over $F$. Let $A$ be the localization of $\mathcal{O}_F$ at $\MFp_F:=\MFP\cap F$. Since $\widetilde{E}$ is assumed to be an irreducible curve $\MFp_F A[x_\infty,y_\infty]$ is a prime ideal of $A[x_\infty,y_\infty]$. Let $A[x_\infty,y_\infty]_\MFP$ be the localization of $A[x_\infty,y_\infty]$ at this ideal.
Then, by the proof of Proposition (\ref{PsiOL1}) the function $V\circ\theta_{\infty, E}^{-1}$ is actually a unit of $A[x_\infty,y_\infty]_\MFP$. Hence $\mathcal{R}^L_{\Omega,\MFa,\MFb}\in A[x_\infty,y_\infty]_\MFP$. Let $\MBF_q:=A/\MFp_FA=\MCO_F/\MFp_F$, $\overline{\MBF}_q$ be the algebraic closure of $\MBF_q$ and let
\[\red_1:A\left[x_\infty,y_\infty\right]_\MFP\longrightarrow \overline{\MBF}_q\left(x_\infty,y_\infty\right)\]
be the ring homomorphism extending the projection map $A\longrightarrow A/\MFp_FA$. Recall that $\overline{\MBF}_q\left(x_\infty,y_\infty\right)$ is the function field of $\widetilde{E}/\overline{\MBF}_q$.  We are interested in computing the polar divisor of $\red_1(\mathcal{R}^L_{\Omega,\MFa,\MFb})$. The divisor of $V$ on $\MBC/L$ is easily deduced from (\ref{diviseurpsi}) and (\ref{multiplicativity}). Moreover the poles of $\mathcal{R}^L_{\Omega,\MFa,\MFb}$ are all of order $p$ and come from the poles and the zeros of $V$. The maps $\theta_{\infty, E}$ and the reduction $E\longrightarrow\widetilde{E}$ give us a map $\widetilde{\theta}:kL/L\longrightarrow\widetilde{E}(\overline{\overline{\MBF}}_q)$ whose restriction to $(\MFa\MFb\MFf)^{-1}L/L$ is injective. Therefore the polar cycle of $\red_1\left(\mathcal{R}^L_{\Omega,\MFa,\MFb}\right)$ is
\[C^L_{\Omega,\MFa,\MFb}=p\Bigl[\sum_{r\in(\MFa\MFb)^{-1}L/L\atop r\not\in\MFa^{-1}L/L}\widetilde{\theta}(r-\Omega)+\varepsilon_{\MFb}\sum_{r\in\MFa^{-1}L/L}\widetilde{\theta}(r-\Omega)\Bigr],\]
where 
\begin{equation*}
\varepsilon_{\MFb}:=
\begin{cases}
0& \mathrm{if}\ p\vert(N(\MFb)-1),\\
1& \mathrm{otherwise}.
\end{cases}
\end{equation*}

Let us remark that reduction modulo the coefficients and the map $C_E$ defined in section \ref{Robert} give a commutative diagram

\begin{equation*}
\begin{CD}
A[x_\infty,y_\infty]_{\MFP}@>\red_1>> \overline{\MBF}_q(x_\infty,y_\infty)\\
@VC_EVV @Vc_EVV\\
\MCO_{F_{\MFP}}[[t]]_{\MFP}@>\red_2>>\overline{\MBF}_q((t)).  
\end{CD}
\end{equation*}
where the vertical maps are injectives and $c_E$ is deduced from $C_E$.
\begin{pr}\label{linind} Let $\Omega$ be as in \S \ref{colemanseries}. Suppose we have the follwing data.
\begin{enumerate}
\item $\MFa$ and $\MFb$ are proper ideals of $\MCO_k$ as in Proposition \ref{psicoleman}.
%\item $\mathcal{C}$ a set of proper ideals of $\MCO_k$ all prime to $6\MFp\MFf\MFb$ such that for any $\MFc$ and $\MFc'$ in $\MCC$, if $\MFc\neq\MFc'$ then $\MFc$ is coprime to $\MFc'$.
\item $\MCD\subset\MCO_k$ a set of nonzero elements all prime to $6\MFp\MFf\MFb$ such that the map $\MCD\longrightarrow\left(\MCO_k/\MFf\right)^\times$ is injective.
\end{enumerate}
%Let $u\in\varprojlim_n\MCO^\times_{\widetilde{F}_n}$ be the projective system of elliptic units such that
%\[u_n= i_\MFp\left( i_\infty ^{-1}\left( \psi\left( 1;\MFa^{-1}\MFf\MFp^{n},(\MFb\MFa)^{-1}\MFf\MFp^{n} \right) \right) \right)^{\nu\GGa},\]
%where $\GGa$ is the automorphism given by Proposition \ref{psicoleman}.
%For all $\GGs\in G_{\MFf,\infty}$, we set $u(\GGs):=u^\GGs$ for convenience.
Then the reductions $\red_2\left(D(\widetilde{\log}\,\hat{\Psi}^L_{d\Omega,\MFa,\MFb})\right)$, $d\in\MCD$, are linearly independant over $\overline{\MBF}_q$.
\end{pr}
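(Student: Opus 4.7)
The plan is to push the statement down to a question about pole divisors of rational functions on $\widetilde{E}$, via the commutative diagram displayed just before the proposition. First I would specialise Corollary \ref{antecedent} to $g=\Psi^L_{d\Omega,\MFa,\MFb}$: using that $C_E$ is a ring homomorphism and $D(\hat{h})=\hat{h'}$, one obtains
\[
D\bigl(\widetilde{\log}\,\hat\Psi^L_{d\Omega,\MFa,\MFb}\bigr)\equiv C_E\bigl(\mathcal{R}^L_{d\Omega,\MFa,\MFb}\bigr)\pmod{\MFp_{F_\MFP}[[t]]}.
\]
Applying $\red_2$ and the commutative square gives
\[
\red_2\bigl(D(\widetilde{\log}\,\hat\Psi^L_{d\Omega,\MFa,\MFb})\bigr)=c_E\bigl(\red_1(\mathcal{R}^L_{d\Omega,\MFa,\MFb})\bigr),
\]
so, since $c_E$ is injective, the problem reduces to proving that the functions $\red_1(\mathcal{R}^L_{d\Omega,\MFa,\MFb})$ for $d\in\MCD$ are $\overline{\MBF}_q$-linearly independent in $\overline{\MBF}_q(x_\infty,y_\infty)$.

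Next I would exploit the polar cycle $C^L_{d\Omega,\MFa,\MFb}$ computed just above the proposition. Let $S_d$ be its support, so $S_d=\widetilde\theta(T_d)$ for a certain subset $T_d$ of $(\MFa\MFb\MFf)^{-1}L/L$ obtained by translating $(\MFa\MFb)^{-1}L/L$ (or its complement with respect to $\MFa^{-1}L/L$, according to the value of $\varepsilon_\MFb$) by $-d\Omega$. The crucial point is that the $S_d$, $d\in\MCD$, are pairwise disjoint. Indeed, if $r_1-d_1\Omega\equiv r_2-d_2\Omega\pmod{L}$ with $r_i\in(\MFa\MFb)^{-1}L/L$ and $d_1\ne d_2$ in $\MCD$, then
\[
(d_1-d_2)\Omega\in(\MFa\MFb)^{-1}L\cap\MFf^{-1}L=L,
\]
the final equality using that $\gcd(\MFa\MFb,\MFf)=\MCO_k$ (which follows from $\MFa$ being prime to $\MFf$ and $\MFb$ being prime to $\MFa\MFf$). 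Since $\Omega$ is a primitive $\MFf$-torsion point of $\MBC/L$, this forces $d_1\equiv d_2\pmod{\MFf}$, contradicting the injectivity of $\MCD\hookrightarrow(\MCO_k/\MFf)^\times$. Because $\MFa\MFb\MFf$ is prime to $\MFp$, the restriction of $\widetilde\theta$ to $(\MFa\MFb\MFf)^{-1}L/L$ is injective, and the disjointness of the $T_d$ transfers to disjointness of the $S_d$ in $\widetilde{E}(\overline{\MBF}_q)$.

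To conclude, suppose $\sum_{d\in\MCD}c_d\,\red_1(\mathcal{R}^L_{d\Omega,\MFa,\MFb})=0$ with some $c_{d_0}\ne0$; since $\MFb$ is a proper ideal, $S_{d_0}$ is nonempty, so choose any $P\in S_{d_0}$. At $P$ the $d_0$-summand has a pole of order $p$ while every other summand is regular, producing a nonzero pole in the zero function, a contradiction. The main obstacle in the whole argument is the geometric separation of the polar supports $S_d$; it depends essentially on the interplay between the coprimality of $\MFa\MFb$ with $\MFf$ and the injectivity of $\MCD$ into $(\MCO_k/\MFf)^\times$, both supplied by the hypotheses. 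Once that separation is established, the remainder of the proof is a formal exercise with pole orders.
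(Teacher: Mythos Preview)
Your proposal is correct and follows essentially the same route as the paper: reduce via Corollary \ref{antecedent} and the commutative square to the $\red_1(\mathcal{R}^L_{d\Omega,\MFa,\MFb})$, then use disjointness of the polar supports $C^L_{d\Omega,\MFa,\MFb}$ to conclude linear independence. The paper's proof simply asserts that the supports are disjoint unless $d=d'$; you have supplied the missing verification (via $(\MFa\MFb)^{-1}L\cap\MFf^{-1}L=L$ and the injectivity of $\MCD\to(\MCO_k/\MFf)^\times$) and the elementary pole-order argument, so your write-up is in fact more complete than the original.
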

\begin{proof}
By Proposition \ref{psicoleman} the power series $\hat{\Psi}^L_{d\Omega,\MFa,\MFb}$ is a Coleman power series $F_v$ for some $v\in\varprojlim_n\MCO^\times_{\widetilde{F}_n}$. Therefore Corollary \ref{antecedent} implies that
\[\red_2\left(D(\widetilde{\log}\,\hat{\Psi}^L_{d\Omega,\MFa,\MFb})\right)=\red_2\circ C_E\left(\mathcal{R}^L_{d\Omega,\MFa,\MFb}\right)=c_E\circ \red_1\left(\mathcal{R}^L_{d\Omega,\MFa,\MFb}\right).\]
Since $c_E$ is one to one we are reduced to consider $\red_1\left(\mathcal{R}^L_{d\Omega,\MFa,\MFb}\right)$. But these rational functions on $\widetilde{E}/\overline{\MBF}_q$ are linearly independant on $\overline{\MBF}_q$. For, the support of the polar cycles $C^L_{d\Omega,\MFa,\MFb}$ and $C^L_{d'\Omega,\MFa,\MFb}$ are disjoint unless $d=d'$.
\end{proof}

\section{The $p$-adic $L$ functions and their $\mu$-invariant}\label{fonctionLP}
Let us fix, once for all, primitive $p^n$ roots of unity $\zeta_n$ such that $\zeta_n^p=\zeta_{n-1}$. We recall that ${\mathbf D}$ is the completion of the maximal unramified extention of $k_{\MFp}$.

\subsection{The measure $\mu^{\MFf}$}\label{themeasure}
Let $\MFf$, $E$ and $L$ be as in section \ref{Robert} above.
%For any ideal $\MFf$ of $\MCO_k$ such that $w_{\MFf}=1$ we fix an elliptic curve $E$ satisfying (\ref{shimura}) and a generalized Weierstrassand equation (\ref{equacourbe}). Let $F:=k(\MFf)$.
Since the quotient $\Lambda(\MFp,L)/p$ is a unit in $F_{\MFP}\subset {\mathbf D}$, where $F=k(\MFf)$, there exists a unit $\Omega_p\in {\mathbf D}$ such that the following equality holds
\[\Omega_p^{\Phi-1}=\frac{\Lambda(\MFp,L)}{p},\]
whose proof may be found for instance in Iwasawa's book \cite[Lemma 3.11]{Iwasawa86}. Thus, if $\widehat{G}_m$ is the multiplicative formal group then there exists a unique isomorphism of formal groups $\eta:\widehat{G}_m\longrightarrow\widehat{E}$ such that $\eta\in X\MCO_{\mathbf D}[[X]]$, $\eta'(0)=\Omega_p$ and
\[f_E\circ\eta=\eta^{\Phi}\circ[p]_m,\] 
where  $[p]_m:=(1+X)^p-1$. This implies in particular that for any $a\in\MCO_{k_\MFp}=\MBZ_p$,
\begin{equation}\label{stability}
\eta^{-1}\circ[a]_E\circ\eta=[a]_m:=(1+X)^a-1.
\end{equation}
Let $\widetilde{F}_\infty:=\cup\widetilde{F}_n$ and let $\mathrm{G}:=\mathrm{Gal}(\widetilde{F}_\infty/F_{\MFP})$. Then, by \cite[Proposition 1.8 page 11]{deshalit87}, there exists a unique isomorphism of topological groups $\kappa: \mathrm{G}\longrightarrow\mathbb{Z}_p^{\times}$ such that $\kappa(\sigma)$ satisfies
\[[\kappa(\sigma)]_E(\omega)=\sigma(\omega),\quad\mathrm{for\ all}\ \omega\in W_{f_E}:=\cup W_{f_E}^n.\]
%Let us fix $\Omega$ as in section \ref{colemanseries}.
The map $\kappa$ does not dependant on our choice of $E$ or $\MFf$.  Indeed, on one hand the Galois group $G':=\mathrm{Gal}({\mathbf D}\widetilde{F}_\infty/{\mathbf D})$ is isomorphic to $G$. On the other hand, $\widetilde{F}_n{\mathbf D}={\mathbf D}(\zeta_n)={\mathbf D}(W_f^n)$ and $\kappa$ is just the action of $G'$ on the Tate module $\varprojlim_n<\zeta_n>$. Moreover the system
\[\omega_n:=\Phi^{-n}(\eta)(\zeta_n-1)\]
satisfy $\omega_n\in W_{\Phi^{-n}(f_E)}^n\backslash W_{\Phi^{-n}(f_E)}^{n-1}$ and $\Phi^{-n}(f_E)(\omega_n)=\omega_{n-1}$.\par
Let us now make our choice of $E$ and $L$ so that $L=\Omega\MFf$ for some $\Omega$, which in turn is automatically a primitive $\MFf$-torsion point of $\MBC/L$. Then by the general theory of formal groups, there exists $d\in\MBZ_p^\times$ such that $\pi_n=\Phi^{-n}([d]_E)(\omega_n)$, where $(\pi_n)_n$ is the system defined in Lemma \ref{baseTate}. Thus, replacing $\Omega_p$ by $d\Omega_p$ if necessary, and $\eta$ by $[d]_E\circ\eta$ we may suppose that $\omega_n=\pi_n$, for all $n$.\par

For any $\beta\in\MCU_{\MFf,\infty }(\MFP)$ the power series $g_\beta:=\widetilde{\log}\,F_{\Omega,\beta}\circ\eta\in\mathcal{O}_{\mathbf D}[[X]]$ gives rise to an $\mathcal{O}_{\mathbf D}$-valued measure $\mu_{g_\beta}$ on $\mathbb{Z}_p$ defined by
\[g_\beta(z)=\int_{\mathrm{G}}(1+z)^xd\mu_{g_\beta}(x),\quad\mathrm{for\ all}\ z\in\MFp_{\mathbf D}.\] 
Chapter 4 of Lang's book \cite{lang90}, particularly Theorems 1.1 and 1.2 are helpful to undertand the connection between measures on $\MBZ_p$ and power series. We should recall that if $H$ and $H'$ are any profinite abelian groups and $h:H\longrightarrow H'$ is any continuous map then we can use $h$ to carry a given $\MCO_{\mathbf D}$-valued measure $\mu$ on $H$ to obtain an $\MCO_{\mathbf D}$-valued measure $h_{*}\mu$ on $H'$ defined as follows. Let $\chi:H'\longrightarrow\MBC_p$ be a continuous map. Then
\[\int_{H'}\chi(\sigma)d(h_{*}\mu)(\sigma)=\int_{H}(\chi\circ h)(\tau)d\mu(\tau).\]
If $H=H'$ then the action of $\tau\in H$ on $\mu\in\Lambda(H,\MCO_{\mathbf D})$ is such that
\[\tau.\mu=(h_\tau)_*\mu,\]
where $h_\tau$ is multiplication by $\tau$. If $H=H'=\MBZ_p$ and $h=h_a$ for some $a\in\MBZ_p^\times$ then we have
\begin{equation}\label{rule}
(h_a)_*\mu_g=\mu_{g\circ[a]_m},
\end{equation}
where $\mu_g$ (resp. $\mu_{g\circ[a]_m}$) is the measure on $\MBZ_p$ associated to the power series $g$ (resp. $g\circ[a]_m$). Let us define the $\MCO_{\mathbf D}$-valued measure on $\mathrm{G}$
\[\mu_\beta:=\kappa^{-1}_*\mu_{g_\beta},\]
and let $\Lambda(\mathrm{G},\MCO_{\mathbf D})$ be the $\MBZ_p$-algebra of $\MCO_{\mathbf D}$-valued measures on $\mathrm{G}$. Since $F_{\Omega,\gamma(\beta)}=F_{\Omega,\beta}\circ[\kappa(\gamma)]$ for any $\gamma\in\mathrm{G}$ by \cite[Corollary 2.3 (iv) page 14]{deshalit87} we see that the map $\beta\longmapsto\mu_\beta$ is a $\MBZ_p[[\mathrm{G}]]$-homomorphism from $\MCU_{\MFf,\infty }(\MFP)$ into $\Lambda(\mathrm{G},\MCO_{\mathbf D})$. In particular
%The reader may take a look on \cite[Definition 3.4 and the lemma following Definition 3.4, page 19]{deshalit87}, where some properties of these measures are stated, especially the property
\begin{equation}\label{translation}
\mu_{\gamma(\beta)}(\gamma U)=\mu_\beta(U),
\end{equation} 
for any compact open subset $U$ of $\mathrm{G}$ and all $\gamma\in\mathrm{G}$.\par  
For any $n$ and any prime ideal $\MFD$ of $k(\MFf\MFp^\infty):=\cup k(\MFf\MFp^n)$ above $\MFp$ we denote $\MCU_{\MFf,n}(\MFD)$ the principal units in the completion of $k(\MFf\MFp^n)$ at $\MFD$.  We denote the projective limit $\varprojlim_n\MCU_{\MFf,n}(\MFD)$ by $\MCU_{\MFf,\infty}(\MFD)$ and we set
\[\MCU_{\MFf,\infty}:=\prod_{\MFD}\MCU_{\MFf,\infty}(\MFD),\]
the projective limit of semi-local units. Let us identify $\mathrm{G}$ to $\MCI_{\MFf}:=\mathrm{Gal}(k(\MFf\MFp^\infty)/k(\MFf))$ and let $i:\MCI_{\MFf}\longrightarrow G_{\MFf,\infty}:=\mathrm{Gal}(k(\MFf\MFp^\infty)/k)$ be the inclusion map. Then to any $\beta\in\MCU_{\MFf,\infty}$ we associate the following $\MCO_{\mathbf D}$-valued measure on $G_{\MFf,\infty}$,
\begin{equation}
\label{mesureglobale}
\mu^0_{\beta} := \sum_{\GGs\in \MCR} (h_{\sigma^{-1}}\circ i)_*\mu_{\left(\beta^{\GGs}\right)_\MFP},
\end{equation}
where $\MCR$ is any complete representative system of $G_{\MFf,\infty }$ modulo $\MCI_\MFf$ and, for any $\sigma\in\MCR$, $\left(\beta^{\GGs}\right)_\MFP$ is the canonical image of $\beta^\GGs$ in $\MCU_{\MFf,\infty }\left( \MFP \right)$. Let us remark that the definition of $\mu^0_{\beta}$ does not depend on $\MCR$. This is a consequence of (\ref{translation}). Moreover the map $\beta\longmapsto\mu^0_{\beta}$ is a $\MBZ_p[[G_{\MFf,\infty}]]$-homomorphism from $\MCU_{\MFf,\infty }$ into $\Lambda(G_{\MFf,\infty},\MCO_{\mathbf D})$.\par
For any $n\in\MBN$ we let $F_n:=k(\MFf\MFp^n)$ and map $\MCO_{F_n}^\times$ into the product $\prod_{\MFD}\MCU_{\MFf,n}(\MFD)$, where $\MFD$ describes the set of prime ideals of $F_n$ above $\MFp$, Let us denote $\upsilon_n$ this map,
\[\upsilon_n:\MCO_{F_n}^\times\longrightarrow\prod_{\MFD}\MCU_{\MFf,n}(\MFD).\]
Then we shall always assume that the component in $\MCU_{\MFf,n}(\MFP)$ of $\upsilon_n(\beta)$ is the projection of $i_{\MFp}(\beta)$. We are interested in the projective system of semi-local units 
\[\beta:=\upsilon_n(\psi\left( \Omega;\MFa^{-1}\MFp^{n}L,(\MFb\MFa)^{-1}\MFp^{n}L \right)),\]
where $\MFa$ and $\MFb$ are as in proposition \ref{psicoleman}. Since 
\[\psi\left( \Omega;\MFa^{-1}\MFp^{n}L,(\MFb\MFa)^{-1}\MFp^{n}L \right)=\psi\left( 1;\MFa^{-1}\MFf\MFp^{n},(\MFb\MFa)^{-1}\MFf\MFp^{n} \right),\]
the power series $g_{\left(\beta^{\GGs}\right)_\MFP}$ depends only on $\MFf$, $\MFa$, $\MFb$ and $\sigma$, and the measure $\mu^0_\beta$ depends only on $\MFf$, $\MFa$ and $\MFb$. We denote it $\mu^{\MFf}(\MFa,\MFb)$. If $\MFc$ is an ideal of $\MCO_k$ prime to $\MFf\MFp$ then we denote $\sigma_{\MFc}$ the automorphism $(\MFc, k(\MFf\MFp^\infty)/k)$ associated to $\MFc$ by the Artin map. The relations  
\begin{eqnarray*}
\psi\left( \Omega;\MFa^{-1}\MFp^{n}L,(\MFb\MFa)^{-1}\MFp^{n}L \right)&=&\psi\left( \Omega;\MFp^{n}L,\MFb^{-1}\MFp^{n}L \right)^{\sigma_{\MFa}}\\ 
\psi\left( \Omega;\MFp^{n}L,\MFb^{-1}\MFp^{n}L \right)^{N(\MFc)-\sigma_{\MFc}}&=&\psi\left( \Omega;\MFp^{n}L,\MFc^{-1}\MFp^{n}L \right)^{N(\MFb)-\sigma_{\MFb}} 
\end{eqnarray*}
imply 
\begin{equation*}
\mu^{\MFf}(\MFa,\MFb)=\sigma_{\MFa}\mu^{\MFf}(1,\MFb)\quad\mathrm{and}\quad(N(\MFc)-\sigma_{\MFc})\mu^{\MFf}(1,\MFb)=(N(\MFb)-\sigma_{\MFb})\mu^{\MFf}(1,\MFc).
\end{equation*}
As it is shown in \cite[proof of Theorem 4.12, pages 77-78]{deshalit87} (this is, in some how, also the conclusion of \cite[Proposition 273]{gillard85}) one may deduce from the second equality the existence of a unique $\MCO_{\mathbf D}$-valued measure $\mu^{\MFf}$ on $G_{\MFf,\infty}$ such that
\[\mu^{\MFf}(1,\MFb)=(N(\MFb)-\sigma_{\MFb})\mu^{\MFf}.\]
The norm relations (\ref{normpsi}) have a translation in terms of measures. Indeed, Let $\MFq$ be a prime ideal of $\MCO_k$ and let $\MFg=\MFf\MFq$. Then $\mu^{\MFg}$ induces by using the restriction map $res^{\MFg}_{\MFf}:G_{\MFg,\infty}\longrightarrow G_{\MFf,\infty}$ the following measure on $G_{\MFf,\infty}$ 
\begin{equation}\label{comportementmesure}
{res^{\MFg}_{\MFf}}_*\mu^{\MFg}=
\begin{cases}
\mu^{\MFf} & if\ \MFq\mid\MFf\\
(1-\sigma_{\MFq}^{-1})\mu^{\MFf} & if\ \MFq\nmid\MFf.
\end{cases}
\end{equation}
This allows us to define $\mu^{\MFf}$ even if $w_{\MFf}\neq1$. Indeed, suppose $\MFf\neq(1)$ and choose any positive integer $m$ such that $w_{\MFf^m}=1$. Then we set
\[\mu^{\MFf}:={res^{\MFf^m}_{\MFf}} _* \mu^{\MFf^m}.\]  
This definition does not depend on $m$ thanks to (\ref{comportementmesure}). If $\MFf=(1)$ then we remark that
\[(1-\sigma_{\MFq_2}^{-1}){res^{\MFq_1}_{(1)}}_*\mu^{\MFq_1}=(1-\sigma_{\MFq_1}^{-1}){res^{\MFq_2}_{(1)}}_*\mu^{\MFq_2},\]
for any prime ideals $\MFq_1\neq\MFp$ and $\MFq_2\neq\MFp$. Therefore there exists a unique pseudo-measure $\mu^{(1)}$ such that 
\[{res^{\MFq}_{(1)}}_*\mu^{\MFq}=(1-\sigma_{\MFq}^{-1})\mu^{(1)},\]
for any prime ideal $\MFq\neq\MFp$.

\subsection{The $p$-adic $L$-function $L_{p,\MFf}$ and associated power series}\label{Lfunction}
Let us fix 
\[\kappa_1:\mathrm{Gal}(k_\infty/k)\longrightarrow 1+p^\epsilon\mathbb{Z}_p\]
an isomorphism of topological groups, where $\epsilon:=1$ if $p\neq 2$ and $\epsilon:=2$ if $p=2$. Then for any $\MCO_{\mathbf D}$-valued measure $\mu$ on $G_{\MFf,\infty}$ and any $p$-adic character $\chi$ of $G_{\MFf,\infty}$ of finite order we define
\[L_{\mu,\MFf}(\chi, s):= L_{\mu,\MFf}(\chi\kappa_1^{-s}):=\int_{G_{\MFf,\infty}}\chi(\sigma)^{-1}\kappa_1(\sigma)^sd\mu(\sigma),\quad\mathrm{for\ all}\ s\in\MBZ_p,\]  
where $\kappa_1(\sigma):=\kappa_1(\sigma_{\vert k_\infty})$. We define $L_{\mu^{(1)},(1)}$ by the same formula when the pseudo-measure $\mu^{(1)}$ is considered, but then we assume $\chi\neq(1)$. By definition
\[L_{p,\MFf}:=L_{\mu^{\MFf},\MFf}.\]
Let us fix an isomorphism
\[G_{\MFf,\infty}\simeq\mathrm{Gal}(k_\infty/k)\times\mathrm{Gal}(k(\MFf\MFp^\infty)/k_\infty).\] 
Let $\gamma_0$ (resp. $c$) be a topological generator of $\mathrm{Gal}(k_\infty/k)$ (resp. $1+p^{\epsilon}\MBZ_p$) and let $\kappa_1$ be chosen so that $c:=\kappa_1(\gamma_0)$. Then any element of $1+p^\epsilon\mathbb{Z}_p$ is uniquely written as $c^x$, with $x\in\mathbb{Z}_p$. Thus we have a continuous map \[\ell:1+p^\epsilon\mathbb{Z}_p\longrightarrow\mathbb{Z}_p,\]
defined by $\ell(c^x)=x$. Let $\alpha:=\ell\circ\kappa_1$. We transform our measure $\mu$, or our pseudo-measure $\mu^{(1)}$, to obtain an $\MCO_{{\mathbf D}(\chi)}$-valued measure $\nu:=\alpha_*(\chi^{-1}\mu)$ on $\mathbb{Z}_p$, where ${\mathbf D}(\chi)$ is the finite extension of ${\mathbf D}$ generated by the values of $\chi$, supposed to be non-trivial when $\mu^{(1)}$ is considered. Let $G_\mu(\chi, X)\in\MCO_{{\mathbf D}(\chi)}[[X]]$ be the power series associated to this measure and let $\upsilon:=(\kappa_1)_*(\chi^{-1}\mu)$, then
\[L_{\mu,\MFf}(\chi, s)=\int_{1+p^\epsilon\mathbb{Z}_p}x^sd\upsilon(x)=\int_{\mathbb{Z}_p}c^{sx}d\nu(x)=G_\mu(\chi,c^s-1).\] 
See \cite[Chap.4, Theorem 1.2 and Example 2]{lang90}. Moreover, if we write $\chi=\chi_0\chi_1$, where $\chi_0$ is trivial on $\mathrm{Gal}(k_\infty/k)$ and $\chi_1$ is trivial on $\mathrm{Gal}(k(\MFf\MFp^\infty)/k_\infty)$, then
\[G_\mu(\chi, X)=G_\mu(\chi_0,\chi_1(\gamma_0)^{-1}(1+X)-1),\]  
whenever $\chi_0\neq1$ or $\mu\neq\mu^{(1)}$. The proof of such equality may be found in \cite[Chap.4, meas 2]{lang90}. By definition we set
\[G_{p,\MFf}(\chi,X):=G_{\mu^{\MFf}}(\chi, X).\]
We point out the relation
\[G_{p,\MFq}(\chi\circ res^{\MFq}_{(1)},X)=(1-\chi(\sigma_{\MFq})(1+X)^{-d})G_{p,(1)}(\chi,X),\quad d:=\alpha(\sigma_{\MFq}),\]
for any $\chi\neq(1)$ and any prime ideal $\MFq\neq\MFp$. Also we define
\[G_{p,(1)}(1,X):=G_{p,\MFq}(1,X),\]
where $\MFq\neq\MFp$ is any prime ideal satisfying $\gamma_0^{-1}=(\MFq, k_\infty/k)$. In particular, for $\chi=\chi_0\chi_1$ as above with $\chi_0=1$, we have
\[G_{p,(1)}(\chi_1,X)=\frac{G_{p,(1)}(1,\chi_1(\gamma_0)^{-1}(1+X)-1)}{1-\chi_1(\gamma_0)^{-1}(1+X)}.\]

Let $\MFf$, $E$ and $L$ be as in subsection \ref{themeasure}. In particular, we have $L=\Omega\MFf$. Let $\mu=\mu^0_\beta$ for some $\beta\in\mathcal{U}_{\MFf,\infty}$. Then one may write $G_\mu(\chi_0, X)$ in terms of what Gillard call the Iwasawa-Mellin-Leopoldt transform of measures obtained from $D_{\Omega_p}(\widetilde{\log}\,F_{\Omega,\left(\beta^{\GGs}\right)_\MFP})\circ\eta$, for $\sigma\in\mathcal{R}$. The differential operator $D_{\Omega_p}$ is already defined in the beginning of section \ref{logdercolser} as $\frac{\Omega_p}{\lambda'(t)}\frac{d}{dt}$. Here we should mention the relation $\lambda\circ\eta(X)=\Omega_p\log(1+X)$ from which we deduce
\[D_{\Omega_p}(\Psi)\circ\eta=\partial(\Psi\circ\eta),\]    
for any $\Psi\in F_{\MFP}[[t]]$, where $\partial=(1+X)\frac{d}{dX}$. By its very definition the measure $\mu_{\widetilde{\log}\,F_{\Omega,u}\circ\eta}$, for $u\in\MCU_{\MFf,\infty}(\MFP)$, has its support in $\MBZ_p^\times$. Therefore
\[\mu_{D_{\Omega_p}(\widetilde{\log}\,F_{\Omega,u})\circ\eta}=\mu_{\partial(\widetilde{\log}\,F_{\Omega,u}\circ\eta)}= \varphi\mu_{\widetilde{\log}\,F_{\Omega,u}\circ\eta}, \]
where $\varphi(x)=x$ for all $x\in\MBZ_p^\times$. The last equality is proved for instance in \cite[Chap.\,4 Meas 7 page 105]{lang90}. Let $\Delta_p$ be the group of roots of unity in $\MBZ_p^\times$. Then for any $\MCO_{\mathbf D}$-valued measure $\mu$ on $\MBZ_p$ we denote by $IML(\mu)$ the power series associated to the measure $\ell_*\ddot{\alpha}_{\vert1+p^\GVe\MBZ_p} $, where
\[\ddot{\alpha}:=\sum_{\GGd\in \Delta_p}(h_{\GGd})_* \mu.\]
%Let us recall that when we write $a*\mu$ for $a\in\MBZ_p^\times$ we identify $a$ with the map $x\longmapsto ax$, $x\in\MBZ_p$.
%We fix a decomposition of $G_{\MFf,\infty }=G_\MFf\times \GGG$, where $G_\MFf$ is the torsion subgroup of $G_{\MFf,\infty}$ and $\GGG$ is a topological group isomorphic to $\MBZ_p$. We fix a topological generator $\GGg$ of $\GGG$.
%Let $k_\infty$ be the unique $\MBZ_p$-extension of $k$, unramified outside of $\MFp$.
%Then the canonical map $G_{\MFf,\infty }\xrightarrow{}\Gal\left(k_\infty/k\right)$ 
%restricts itself in a natural isomorphism $\GGG\simeq\Gal\left(k_\infty/k\right)$, 
%and for all $n\in \MBN$ we let $k_n$ be the subfield of $k_\infty$ fixed by $\GGG_n:=\GGG^{p^n}$.
If $\mu=\mu_g$ for some power series $g$ then we also write $IML(g)$ for $IML(\mu_g)$. In the proof of Proposition \ref{lemchiuprimeyop} below we shall use the relation
\begin{equation}\label{rule2}
(1+X)^dIML(g\circ\eta)=IML(g\circ[c^d]_E\circ\eta)
\end{equation}
easily deduced from the definition of the $IML$ transformation and from (\ref{stability}) and (\ref{rule}).
Let $f\in \MBN$ be such that $k(\MFf)\cap k_\infty=k(1)\cap k_\infty = k_f$ and let $\SCs=p^f$. Let $\gamma_0$ and $c$ be such that the restriction of $\kappa^{-1}(c)$ to $k_\infty$ is equal to      $\gamma_0^{\SCs}$,
\begin{equation}\label{choix}
\kappa^{-1}(c)\vert_{ k_\infty}=\gamma_0^{\SCs}
\end{equation}
Let $\omega:\MBZ_p^\times\longrightarrow\Delta_p$ be the group homomorphism such that $x=\GGw(x)c^{\ell(x)}$ and let us set $\GGw(x)=0$ for $x\in p\MBZ_p$. For any $u\in \MCU_{\MFf,\infty}\left( \MFP \right)$ we set 
\begin{equation}
\label{serieG}
\mu_u'(i):=\GGw^i\mu_{\partial(\widetilde{\log}\,F_{\Omega,u}\circ\eta)}\quad\mathrm{and}\quad G_u^i(X):=IML\left(\mu_u'(i)\right)(c^{-1}\left( 1+X \right)-1).
\end{equation}
\begin{lm}\label{liendeshalitgillard}
Let $\beta\in \MCU_{\MFf,\infty}$. Let $\chi=\chi_0$ be a $p$-adic character of $G_{\MFf,\infty}$ ($\chi_1=1$). Let $i\in\MBZ$ be such that $\chi^{-1}\circ\kappa^{-1}=\omega^i$. Then
\[G_{\mu^0_\beta}(\chi_0, X)
= \sum_{\GGs\in \MCR}\chi_0(\GGs)\left( 1+X \right)^{-<\GGs>}G^{i-1}_{\left(\beta^\GGs\right)_\MFP}\left( (1+X)^\SCs-1 \right),\]
where $<\GGs>:=(\ell\circ\kappa_1)(\sigma).$
\end{lm}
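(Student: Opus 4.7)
My plan is to expand both sides of the claimed identity via the defining relation $G_\mu(\chi,c^s-1)=L_{\mu,\MFf}(\chi,s)=\int\chi^{-1}\kappa_1^s\,d\mu$, and verify they agree term-by-term over $\GGs\in\MCR$. Inserting the decomposition (\ref{mesureglobale}) and applying the pushforward rule $\int f\,d(h_{*}\mu)=\int(f\circ h)\,d\mu$ transforms the left-hand side into
\[L_{\mu^0_\beta,\MFf}(\chi_0,s)=\sum_{\GGs\in\MCR}\int_{\mathrm{G}}\chi_0(\GGs^{-1}i(\gamma))^{-1}\kappa_1(\GGs^{-1}i(\gamma))^s\,d\mu_{(\beta^{\GGs})_{\MFP}}(\gamma).\]
By multiplicativity of characters the factor depending on $\GGs$ pulls out as $\chi_0(\GGs)\kappa_1(\GGs)^{-s}=\chi_0(\GGs)c^{-s<\GGs>}$, which is precisely the expected prefactor $(1+X)^{-<\GGs>}$ once we substitute $X=c^s-1$.

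Next I would rewrite the remaining integrand in terms of the Lubin--Tate character $\kappa$. The hypothesis $\chi_0^{-1}\circ\kappa^{-1}=\GGw^i$ immediately yields $\chi_0^{-1}(i(\gamma))=\GGw^i(\kappa(\gamma))$. For the $\kappa_1$-contribution, the normalization (\ref{choix}) forces $\kappa_1(\kappa^{-1}(c))=c^{\SCs}$, and since $\mathrm{Gal}(k_\infty/k)$ is torsion-free the $\Delta_p$-part of $\kappa(i(\gamma))$ restricts trivially to $k_\infty$, so $\kappa_1(i(\gamma))=c^{\SCs\,\ell(\kappa(\gamma))}$. Applying the change of variables $x:=\kappa(\gamma)$ through $\mu_{(\beta^{\GGs})_{\MFP}}=\kappa^{-1}_{*}\mu_{g_{(\beta^{\GGs})_{\MFP}}}$ (with $g_u:=\widetilde{\log}\,F_{\Omega,u}\circ\eta$, justified because this measure is supported on $\MBZ_p^\times$), the $\GGs$-summand reduces to
\[\int_{\MBZ_p^\times}\GGw^i(x)\,c^{s\SCs\ell(x)}\,d\mu_{g_{(\beta^{\GGs})_{\MFP}}}(x).\]

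Finally I would identify this integral with $G^{i-1}_{(\beta^{\GGs})_{\MFP}}(c^{s\SCs}-1)$. Unwinding (\ref{serieG}), the identity $\mu_{\partial(\widetilde{\log}\,F_{\Omega,u}\circ\eta)}=\varphi\mu_{g_u}$ with $\varphi(x)=x$ gives $\mu'_u(i-1)=\GGw^{i-1}\varphi\mu_{g_u}$. A short calculation of the $IML$ transform shows $IML(\mu)(X)=\int_{\MBZ_p^\times}(1+X)^{\ell(x)}\,d\mu(x)$; substituting $\mu=\mu'_u(i-1)$, writing $x=\GGw(x)c^{\ell(x)}$, and then composing with $X\mapsto c^{-1}(1+X)-1$ (so the extraneous $c^{\ell(x)}$ cancels the $c^{-\ell(x)}$ produced by the substitution) yields
\[G^{i-1}_u(X)=\int_{\MBZ_p^\times}(1+X)^{\ell(x)}\GGw^i(x)\,d\mu_{g_u}(x),\]
whence $G^{i-1}_u(c^{s\SCs}-1)$ matches the above integral. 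Assembling the pieces with $X=c^s-1$ gives the claim. The main obstacle is the middle step --- establishing $\kappa_1\circ i=c^{\SCs\,\ell\circ\kappa}$ from the normalization (\ref{choix}) --- since this is where the two a priori distinct characters of $\mathrm{G}$ must be reconciled; the remainder is a routine, if delicate, manipulation of pushforward measures and of the $IML$-transform.
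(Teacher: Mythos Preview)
Your argument is correct and is precisely the ``straightforward computation'' that the paper omits. The paper's own proof consists of the single sentence ``Since the computations are straightforward we omit them,'' so there is nothing to compare against beyond noting that your unwinding of the definitions (the pushforward along $h_{\sigma^{-1}}\circ i$, the identification $\kappa_1\circ i=c^{\SCs\,\ell\circ\kappa}$ via torsion-freeness of $\mathrm{Gal}(k_\infty/k)$ and (\ref{choix}), and the reduction of $G^{i-1}_u$ to the integral $\int_{\MBZ_p^\times}(1+X)^{\ell(x)}\GGw^i(x)\,d\mu_{g_u}$) is exactly what the authors had in mind.
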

\begin{proof} Since the computations are straightforward we omit them. 
\end{proof}

\subsection{The $\mu$-invariant of $G_{\mu^0_\beta}(\chi_0, X)$}\label{muinvariant}
Let $\MFf$, $E$ and $L$ be as in section \ref{Robert} with the additional condition $L=\Omega\MFf$ for some complex number $\Omega$. Let $ord$ be the valuation of $\mathbb{C}_p$ normalized by $ord(p)=1$. Let $M$ be a finite extension of $\MBQ_p$. If $f=\sum a_nX^n$ is an element of $\MCO_{{\mathbf D}(M)}[[X]]$ associated to an $\MCO_{{\mathbf D}(M)}$-valued measure $\alpha$ on $\mathbb{Z}_p$, then by $\mu(f)$ or $\mu(\alpha)$ we mean the minimum of $ord(a_n)$, $n\in\MBN$. Usually we denote $f^*$ the power series associated to the restriction of $\alpha$ to $\MBZ_p^\times$. We have
\[f^*(X)=f(X)-\frac{1}{p}\sum_{\zeta^p=1}f(\zeta(1+X)-1).\]

\begin{theo}\label{gillardsinnott} (\cite[Theorem 1.5.1]{gillard85} for $p>3$)
Let $\MCF$ be a finite subset of $\MCO_{k_\MFp}^\times$ ($=\MBZ_p^\times$), such that for any $(a,b)\in \MCF^2$, 
if $\zeta a/b\in\MCO_k$, for some $\zeta\in\Delta_p$, then $a=b$. Let $M$ be a finite extension of $\MBQ_p$, and let $f_a$, $a\in\MCF$ be a set of power series in $\MCO_M[[t]]\cap M(\hat{x}_\infty,\hat{y}_\infty)$
\[\mu\Bigl(\sum_{a\in\MCF}IML\left(f_a\circ\eta{\circ}[a]_m\right)\Bigr)=\min_{a\in\MCF} \mu\Bigl(\sum_{\GGd\in \mu(k)}\left( f_a\circ\eta \right)^*\circ[\GGd]_m\Bigr)\]

%Then the valuation of $\sum_{a\in\MCF}\IML\left(f_a{\circ}\eta{\circ}[a]_\MRm\right)$ is the minimum of the valuations of the sums
%$\sum_{\GGd\in \mu(k)}\left( 1_{\MBZ_p^\times}\ast \left( f_a{\circ}\eta \right) \right){\circ}[\GGd]_\MRm$.
\end{theo}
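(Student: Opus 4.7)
The plan is to adapt Sinnott's strategy as developed by Gillard, whose essential input is the algebraic independence modulo $\MFp$ of the formal multiplications $[a]_E$. The argument splits into a computational reduction that rewrites the left-hand side in terms of the rational functions studied in section \ref{logdercolser}, followed by an application of Proposition \ref{linind} to rule out cancellation modulo $\MFp$.

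First I would rewrite each summand of the left-hand side. Using $\eta\circ[a]_m=[a]_E\circ\eta$ from (\ref{stability}) together with the decomposition $a=\omega(a)\,c^{\ell(a)}$ valid for any $a\in\MBZ_p^\times$, I apply (\ref{rule2}) to obtain
$$\IML(f_a\circ\eta\circ[a]_m)=(1+X)^{\ell(a)}\,\IML\bigl(f_a\circ\eta\circ[\omega(a)]_m\bigr),$$
so the left-hand side equals $\sum_{a\in\MCF}(1+X)^{\ell(a)}\,\IML(f_a\circ\eta\circ[\omega(a)]_m)$. Unfolding the definition of $\IML$ as the pushforward by $\ell$ of the restriction of the associated measure to $\MBZ_p^\times$ and grouping the $\Delta_p$-smearing according to the cosets of $\mu(k)\cap\Delta_p$, I use the Galois symmetry of $f_a\in M(\hat x_\infty,\hat y_\infty)$ (which is the $C_E$-image of a rational function on $E/F$) to deduce
$$\mu\bigl(\IML(f_a\circ\eta\circ[a]_m)\bigr)\ge\mu\Bigl(\sum_{\GGd\in\mu(k)}(f_a\circ\eta)^*\circ[\GGd]_m\Bigr)$$
for each $a\in\MCF$; the inequality $\mu(\mathrm{LHS})\ge\min_a\mu(\cdot)$ then follows from the nonarchimedean triangle inequality.

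For the reverse inequality, I normalize so that $\min_a\mu(\cdot)=0$ and show that the reduction of the left-hand side modulo $\MFp$ is nonzero. Via the commutative diagram preceding Proposition \ref{linind} and the identification of the reduction of $\IML$ with the logarithmic derivative $D(\widetilde{\log}\,\hat\Psi)$, the reduction modulo $\MFp$ of $\IML(f_a\circ\eta\circ[\omega(a)]_m)$ equals $c_E\circ\red_1(\mathcal R^L_{\omega(a)\Omega,\MFa,\MFb})$ up to a unit in $\overline{\MBF}_q[[X]]$, where $\mathcal R^L$ is the rational function on $\widetilde E$ from section \ref{logdercolser}. The hypothesis on $\MCF$ (that $\zeta a/b\in\MCO_k$ for some $\zeta\in\Delta_p$ forces $a=b$) ensures that the points $\omega(a)\Omega$ give rise to disjoint polar divisors on $\widetilde E/\overline{\MBF}_q$. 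Proposition \ref{linind} then yields the linear independence of the reduced summands over $\overline{\MBF}_q$, and since the units $\overline{(1+X)^{\ell(a)}}$ in $\overline{\MBF}_q[[X]]$ cannot produce new cancellation, the reduction of the total sum is nonzero.

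The main obstacle will be the identification, modulo $\MFp$, of $\IML(f_a\circ\eta\circ[\omega(a)]_m)$ with the reduction of $\mathcal R^L_{\omega(a)\Omega,\MFa,\MFb}$: this requires carefully tracking the Coleman norm operator, the invariant differential $\hat\omega_E$, and the change of variable induced by $\eta$, and checking that nothing essential is lost upon reduction modulo $\MFp$. Once this identification is secured, Proposition \ref{linind} closes the argument uniformly in $p$, which is precisely what enables the extension to $p=2$ and $p=3$.
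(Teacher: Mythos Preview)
Your proposal has a genuine gap: you are trying to prove a statement about \emph{arbitrary} power series $f_a\in\MCO_M[[t]]\cap M(\hat x_\infty,\hat y_\infty)$ by invoking Proposition~\ref{linind}, which is a statement about the \emph{specific} Coleman series $\hat\Psi^L_{d\Omega,\MFa,\MFb}$. There is no $\Omega$, no $\MFa$, no $\MFb$ in the hypotheses of Theorem~\ref{gillardsinnott}, and no reason why the reduction of $IML(f_a\circ\eta\circ[\omega(a)]_m)$ should equal $c_E\circ\red_1(\mathcal R^L_{\omega(a)\Omega,\MFa,\MFb})$: the function $\mathcal R^L_{\omega(a)\Omega,\MFa,\MFb}$ is built from $\Psi^L_{\omega(a)\Omega,\MFa,\MFb}$, not from an arbitrary $f_a$. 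You have in effect conflated Theorem~\ref{gillardsinnott} with Proposition~\ref{lemchiuprimeyop}; in the paper the logical order is the reverse --- Theorem~\ref{gillardsinnott} is proved first, as an abstract tool, and only afterwards is it combined with Proposition~\ref{linind} to obtain Proposition~\ref{lemchiuprimeyop}.

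The actual proof follows Sinnott--Gillard. After the standard reduction (assume $(f_a\circ\eta)^*=f_a\circ\eta$ and invariance under $[\delta]_m$ for $\delta\in\mu(k)$), one passes to the associated measures, restricts to a coset of $1+p^\epsilon\MBZ_p$, and obtains auxiliary power series $g_a$. The key input is not Proposition~\ref{linind} but Gillard's algebraic independence theorem \cite[Th\'eor\`eme 1.1.1]{gillard85}: if $\sum_a \bar g_a\circ[a]_{\widetilde E}=0$ in $\overline{\MBF}_q(\widetilde E)$ with the $a$'s pairwise inequivalent modulo $\Delta_p\cdot\MCO_k^\times$, then each $\bar g_a$ is constant. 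One must also check (this is \cite[Proposition 2.3.3]{gillard87}, the analogue of Sinnott's \cite[Lemma 1.1]{Sinnott84}) that the $g_a$ remain rational in $\hat x_\infty,\hat y_\infty$ over a finite extension, so that the independence theorem applies. The contradiction then comes from translating ``$\bar g_a$ constant'' back to the measure side. None of this requires $p>3$; that is the whole point of the extension.
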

\begin{proof} If $p\geq5$ then this theorem is exactly Th\'eor\`eme 1.5.1 of Gillard's paper \cite{gillard85}. If $p=2$ or $p=3$ then Gillard's method still applies. Indeed, following a remark made by Sinnott in his proof of \cite[Theorem 1]{Sinnott84} and also used by Gillard, we may suppose that $(f_a\circ\eta)^*=f_a\circ\eta$ and $(f_a\circ\eta)\circ[\delta]_m=f_a\circ\eta$, for all $a\in\MCF$ and all $\delta\in\mu(k)$. In this case, and since $w_k=2$, we have to prove
\begin{equation}\label{formulepourmu}
\mu\Bigl(\sum_{a\in\MCF}IML\left(f_a\circ\eta\circ[a]_m\right)\Bigr)=2\min_{a\in\MCF} \mu\left( f_a\circ\eta\right).
\end{equation}
Let us denote $\alpha_a$, for $a\in\MCF$, the $\MCO_{{\mathbf D}(M)}$-valued measure of $\MBZ_p$ associated to $f_a\circ\eta$. Let $U:=1+p^\epsilon\MBZ_p$. Then it is easy to see that
\[({h_a}_{*}\alpha_a)\vert_U={h_a}_{*}({\alpha_a}\vert_{\delta_a U}),\quad {with}\quad\delta_a:=\omega(a)^{-1}.\]
Let $g_a\circ\eta$ be the power series associated to ${\alpha_a}\vert_{\delta_a U}$. Then, since $\Delta_p=\mu(k)=\{-1, 1\}$, the left hand side of (\ref{formulepourmu}) is equal to $2\mu\Bigl(\sum_{a\in\MCF}g_a\circ\eta\circ[a]_m\Bigr)$. Hence we have to show
\[\mu\Bigl(\sum_{a\in\MCF}g_a\circ\eta\circ[a]_m\Bigr)=\min_{a\in\MCF} \mu\left( f_a\circ\eta\right).\]
We may assume that $\min_{a\in\MCF} \mu\left( f_a\circ\eta\right)=0$. If $\mu\Bigl(\sum_{a\in\MCF}g_a\circ\eta\circ[a]_m\Bigr)>0$ then the relation
$\eta^{-1}\circ[a]_E\circ\eta=[a]_m$ implies that $\mu\Bigl(\sum_{a\in\MCF}g_a\circ[a]_E\Bigr)>0$.
As proved by Gillard in \cite[Proposition 2.3.3]{gillard87} there exists $M'$ a finite extension of $\MBQ_p$ such that $g_a\in\MCO_{M'}[[t]]\cap M'(\hat{x}_\infty,\hat{y}_\infty)$, for all $a$. This result is the analogue of \cite[Lemma 1.1]{Sinnott84}. Let us briefly explain how to check it. Fix $a\in\mathcal{F}$. Let $\chi_a$ be the characteristic function of $\delta_aU$ and decompose $\chi_a$ as a sum
\[\chi_a(u)=\frac{1}{p^{\epsilon}}\sum_{i=0}^{p^{\epsilon}-1}\theta^i\zeta_{\epsilon}^{iu},\]
where $\theta:=\zeta_\epsilon^{-\delta_a}$. Let $t:=\eta(X)$ and $t_i:=\eta(\zeta_\epsilon^i-1)$, then
\[g_a(t)=\frac{1}{p^{\epsilon}}\sum_{i=0}^{p^{\epsilon}-1}\theta^if_a(t[+]t_i),\]
where $t[+]t_i=\widehat{E}(t,t_i)$. The addition law on $E$(see for instance \cite[page 58]{Silverman92}) and the fact that the $t_i's$ are in $k(\MFf\MFp^\epsilon)_{\MFP}$ show that $\hat{x}_\infty(t[+]t_i)$ and $\hat{y}_\infty(t[+]t_i)$ are rational functions of $\hat{x}_\infty(t)$ and $\hat{y}_\infty(t)$ with coefficients in $\widetilde{ F}_\epsilon:=k(\MFf\MFp^\epsilon)_{\MFP}$. Hence if $f_a\in M(\hat{x}_\infty,\hat{y}_\infty)$ then $g_a\in M\widetilde{F}_\epsilon(\theta)(\hat{x}_\infty,\hat{y}_\infty)$.\par
By reducing $g_a$ modulo the maximal ideal of $M'$ we obtain the equality
\[\sum_{a\in\MCF}\bar{g}_a\circ[a]_{\widetilde{E}}=0.\]
By \cite[1.1.1 Th\'eor\`eme]{gillard85}, for any $a\in\MCF$ the power series $\bar{g}_a\circ[a]_{\widetilde{E}}$ is constant. Hence $\bar{g}_a$ is contant. Let $\pi$ be a uniformizer of $M'$ and $\upsilon_0$ be the Dirac measure at $0$. As Sinnott did in his case we translate the above conclusion into measures to obtain
\[{\alpha_a}\vert_{\delta_a U}=c_a\upsilon_0+\pi\alpha'_a,\]
for some constant $c_a\in\MCO_{{\mathbf D}(M')}$, and some $\MCO_{{\mathbf D}(M')}$ valued measure $\alpha_a'$ on $\MBZ_p$. But this is possible only if $c_a\equiv0$ modulo $\pi$. Since $\alpha_a$ is invariant by $[\delta]$ for $\delta\in\mu(k)$ we deduce that $\alpha_a\equiv0$ modulo $\pi$; which is a contradiction with the assumption  $\min_{a\in\MCF} \mu\left( f_a\circ\eta\right)=0$. This completes the proof of our theorem.
\hfill $\square$
\end{proof}

\medskip
Let $\MCJ$ be a set of ideals of $\MCO_k$, prime to $\MFf\MFp$, in bijection with $\Gal\left( k(1)/k \right)$ via the Artin map.
For each $j\in\{0,...,\SCs-1\}$, we define $\MCJ_j\subseteq\MCJ$ to be the subset formed by the $\MFc\in \MCJ$ such that 
$<\GGs_\MFc>\equiv j$ mod $\SCs$, where $\GGs_\MFc:=\left(\MFc, k(\MFf\MFp^\infty)/k\right)$.

\begin{lm}\label{MCFJ} If $\mathfrak{c}$ is a principal ideal of $\MCO_k$ prime to $\MFf\MFp$, generated by some $x\equiv1$ modulo $\MFf$ then
$\kappa(\sigma_{\MFc})=i_{\MFp}(x)$.
%Moreover, for any $\MFd\in\MCJ_0$ there exists a unique $x_{\MFd}\in\MCO_k$ such that $x_{\MFd}\equiv1$ modulo $\MFp^\epsilon\MFf$ and $i_{\MFp}(x_{\MFd})=c^{<\sigma_\MFd>/\SCs}$.
 
\end{lm}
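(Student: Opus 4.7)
The plan is to compute $\sigma_\MFc(\omega_n)$ in two ways and compare. On one side, identifying $G$ with $G'=\Gal({\mathbf D}\widetilde F_\infty/{\mathbf D})$, the element $\sigma_\MFc$ fixes ${\mathbf D}$ and hence the coefficients of $\Phi^{-n}(\eta)\in\MCO_{\mathbf D}[[X]]$; combined with $\omega_n=\Phi^{-n}(\eta)(\zeta_n-1)$ and the description of $\kappa$ as the cyclotomic character (recorded in the paragraph following the definition of $\kappa$), this yields $\sigma_\MFc(\omega_n)=\Phi^{-n}(\eta)(\zeta_n^{\kappa(\sigma_\MFc)}-1)$. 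On the other side, Shimura reciprocity applied to $E^{(n)}$ will give $\sigma_\MFc(\omega_n)=\Phi^{-n}(\eta)(\zeta_n^{i_\MFp(x)}-1)$, and matching the two expressions for every $n$ will force $\kappa(\sigma_\MFc)=i_\MFp(x)$.

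For the Shimura-theoretic side I would argue as follows. Since $x\equiv 1\ \mr{modulo}\ \MFf$, Artin reciprocity forces $\sigma_\MFc|_F=1$, so $(E^{(n)})^{\sigma_\MFc}=E^{(n)}$ and the isogeny $\lambda_\MFc^{E^{(n)}}$ is an endomorphism of $E^{(n)}/F$. The identity $\Lambda(x\MCO_k,L)=x$ stated just after (\ref{g2Lag3La}), combined with (\ref{lambdala}) and the Galois-invariance of $x\in k$, gives $\Lambda(\MFc,L_{\MFa_n})=x$; the commutative diagram defining $\lambda_\MFc^{E^{(n)}}$ then shows this isogeny to be multiplication by $x$ on $\MBC/L_{\MFa_n}$. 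Applying (\ref{caracterisation}) to the primitive $\MFp^n$-torsion point $\theta_{\infty,E^{(n)}}(u_n)$, whose order is prime to $\MFc$, yields $\sigma_\MFc(\theta_{\infty,E^{(n)}}(u_n))=\theta_{\infty,E^{(n)}}(xu_n)$.

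Because $E^{(n)}$ is defined over $F$ and $\sigma_\MFc$ fixes $F$, the isomorphism $\theta_{\MFp,E^{(n)}}$ is $\sigma_\MFc$-equivariant, and (\ref{morphism}) translates multiplication by $x$ on the complex-analytic model into the action of $[i_\MFp(x)]_{E^{(n)}}=\Phi^{-n}([i_\MFp(x)]_E)$ on the formal side, giving $\sigma_\MFc(\pi_n)=\Phi^{-n}([i_\MFp(x)]_E)(\pi_n)$. Invoking the ad hoc normalization $\omega_n=\pi_n$ fixed just before subsection~\ref{Lfunction} together with (\ref{stability}), and noting that $[i_\MFp(x)]_m=(1+X)^{i_\MFp(x)}-1\in\MBZ_p[[X]]$ is $\Phi$-invariant, one rearranges this to $\sigma_\MFc(\omega_n)=\Phi^{-n}(\eta)(\zeta_n^{i_\MFp(x)}-1)$, which closes the comparison.

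The main obstacle I anticipate is pure book-keeping: threading together the $\Phi^{-n}$-twist encoded in the passage $E\rightsquigarrow E^{(n)}$, the normalization of $\Omega_p$ and $\eta$ that makes $\omega_n=\pi_n$, and the identification $G\simeq G'$, so that the two independent computations of $\sigma_\MFc(\omega_n)$ really land in the same formula $\Phi^{-n}(\eta)(\zeta_n^{?}-1)$. Once those compatibilities are lined up, the identity $\kappa(\sigma_\MFc)=i_\MFp(x)$ is forced by the defining property of $\kappa$.
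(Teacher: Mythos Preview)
Your argument is correct and rests on the same core idea as the paper: since $x\equiv 1\ \mr{modulo}\ \MFf$, the Artin symbol $\sigma_\MFc$ is trivial on $F$, so Shimura's isogeny $\lambda_\MFc$ is just the endomorphism ``multiplication by $x$'' on $E$ (or $E^{(n)}$), and this translates on the formal side into the action of $[i_\MFp(x)]$; comparing with the characterization of $\kappa$ then forces $\kappa(\sigma_\MFc)=i_\MFp(x)$.

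The paper, however, implements this much more directly. It works with $E$ itself rather than $E^{(n)}$, and uses the \emph{defining} property of $\kappa$, namely $[\kappa(\sigma)]_E(\omega)=\sigma(\omega)$ for $\omega\in W_{f_E}$, instead of the derived cyclotomic description via $\eta$ and $\zeta_n$. Concretely: $\sigma_\MFc$ fixes $F_\MFP$, hence commutes with $\theta_{\MFp,E}$, and (\ref{caracterisation}) gives $\theta_{\MFp,E}(\omega)^{\sigma_\MFc}=x\cdot\theta_{\MFp,E}(\omega)$ for any $\omega\in W_{f_E}$; then (\ref{morphism}) yields $\sigma_\MFc(\omega)=[i_\MFp(x)]_E(\omega)$, and the definition of $\kappa$ finishes the job. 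No passage to $E^{(n)}$, no $\pi_n$, no normalization $\omega_n=\pi_n$, no $\eta$ or $\zeta_n$, and no $\Phi$-equivariance bookkeeping are needed. Your detour through the sequence $(\omega_n)$ and the cyclotomic interpretation is valid but superfluous: the whole ``obstacle of pure book-keeping'' you anticipate simply disappears if you stay with $W_{f_E}$ and the original definition of $\kappa$.
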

\begin{proof} Since $\sigma_{\MFc}$ is the identity map on $F$ we deduce from section \ref{Robert} that the isogeny $\lambda_{\MFc}$ is simply the endomorphism of $E$ associated to $x$, and that $\sigma_{\MFc}$ commutes with $\theta_{\MFp, E}$. In particular for any $\omega\in W_{f_E}$ we have
\[\theta_{\MFp, E}(\sigma_{\MFc}(\omega))= \theta_{\MFp, E}(\omega)^{\sigma_{\MFc}}=x.\theta_{\MFp, E}(\omega).\]
But we know from (\ref{morphism}) that $x.\theta_{\MFp, E}(\omega)= \theta_{\MFp, E}([i_{\MFp}(x)]_E(\omega))$. This proves the lemma.
%Now, if $\MFd\in\MCJ_0$ then the restriction of $\sigma_{\MFd}$ to $k_\infty$ is in $\mathrm{Gal}(k_\infty/k_f)$. So we can decompose $\sigma_{\MFd}$ as a product $\sigma_{\MFd}=\tau_0\tau_1$, with $\tau_0\in\mathrm{Gal}(k(\MFf\MFp^\infty)/k_\infty)$ and $\tau_1\in\mathrm{Gal}(k(\MFf\MFp^\infty)/k(\MFf\MFp^\epsilon))$. For, $k(\MFf\MFp^\epsilon)\cap k_\infty=k_f$ and $\mathrm{Gal}(k(\MFf\MFp^\infty)/k(\MFf\MFp^\epsilon))\simeq1+p^\epsilon\MBZ_p$ from which we deduce that $k(\MFf\MFp^\infty)=k(\MFf\MFp^\epsilon)k_\infty$. By class field theory there exists a unique $x_{\MFd}\in\MCO_k$ such that $x_{\MFd}\equiv1$ modulo $\MFp^\epsilon\MFf$ and $\tau_1=\left(x_{\MFd}\MCO_k, k(\MFf\MFp^\infty)/k\right)$. By the first assertion of the lemma we have $\kappa^{-1}(i_{\MFp}(x_{\MFd}))=\tau_1$. In particular, the restriction of $\kappa^{-1}(i_{\MFp}(x_{\MFd}))$ to $k_\infty$ is $\gamma_0^{<\sigma_{\MFd}>}$. This is also the restriction of $\kappa^{-1}(c^{<\sigma_\MFd>/\SCs})$, thanks to (\ref{choix}). Thus $i_{\MFp}(x_{\MFd})=c^{<\sigma_\MFd>/\SCs}$.
\hfill$\square$

\end{proof}
\begin{cor}\label{imagekappa1}(\cite[Proposition 2.3.7.]{gillard85} for $p>3$) If $\mathfrak{c}$ is a principal ideal of $\MCO_k$ prime to $\MFp$, generated by some $x\equiv1$ modulo $\MFp^\epsilon$ then
$\kappa_1(\tau_{\MFc})=i_{\MFp}(x)^{\SCs}$, where $\tau_\MFc:=\left(\MFc, k(\MFp^\infty)/k\right)$.
\end{cor}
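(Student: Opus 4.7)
The plan is to reduce to Lemma~\ref{MCFJ} by weak approximation, using (\ref{choix}) to translate between $\kappa$ and $\kappa_1$. The first step is to extract from (\ref{choix}) the clean identity: for any $\sigma\in\mathrm{G}$ with $\kappa(\sigma)\in 1+p^\epsilon\MBZ_p$,
\[
\kappa_1(\sigma|_{k_\infty})=\kappa(\sigma)^{\SCs}.
\]
Indeed, writing $z:=\kappa(\sigma)=c^{\ell(z)}$ and using that $\kappa$ is a group homomorphism, $\sigma=\kappa^{-1}(c)^{\ell(z)}$, whence (\ref{choix}) gives $\sigma|_{k_\infty}=\gamma_0^{\SCs\ell(z)}$ and applying $\kappa_1$ yields $c^{\SCs\ell(z)}=z^{\SCs}$.

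Next, given $x\in\MCO_k$ with $x\equiv 1 \pmod{\MFp^\epsilon}$ and $(x)$ prime to $\MFp$, I would approximate $x$ from within the hypotheses of Lemma~\ref{MCFJ}. For each integer $N\geq\epsilon$, the Chinese Remainder Theorem applied to the coprime ideals $\MFf$ and $\MFp^N$ produces $y\in\MCO_k$ with $y\equiv 1 \pmod{\MFf}$ and $y\equiv x \pmod{\MFp^N}$. Then $y\equiv 1 \pmod{\MFp^\epsilon}$ and $(y)$ is prime to $\MFf\MFp$, so Lemma~\ref{MCFJ} gives $\kappa(\sigma_{(y)})=i_\MFp(y)\in 1+p^\epsilon\MBZ_p$. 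Because $y\equiv 1 \pmod{\MFf}$, the element $\sigma_{(y)}$ fixes $F=k(\MFf)$, so it identifies with an element of $\mathrm{Gal}(k(\MFf\MFp^\infty)/F)\cong\mathrm{G}$, and the identity above yields $\kappa_1(\sigma_{(y)}|_{k_\infty})=i_\MFp(y)^\SCs$. Since the global Artin symbol is compatible with restriction to abelian subextensions, $\sigma_{(y)}|_{k_\infty}=\tau_{(y)}|_{k_\infty}$, so $\kappa_1(\tau_{(y)})=i_\MFp(y)^\SCs$.

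The conclusion follows by letting $N\to\infty$. Since $y\equiv x \pmod{\MFp^N}$, one has $i_\MFp(y)\to i_\MFp(x)$ in $\MBZ_p^\times$, and hence $i_\MFp(y)^\SCs\to i_\MFp(x)^\SCs$. On the Galois side, $\tau_{(y)}\tau_{(x)}^{-1}$ is the Artin symbol of the fractional ideal $(y/x)$ in $\mathrm{Gal}(k_\infty/k)$, and by local class field theory applied to the totally ramified $\MBZ_p$-extension $k_\infty/k$ at $\MFp$, the image of $1+\MFp^N\subset\MCO_{k_\MFp}^\times$ lies in $\mathrm{Gal}(k_\infty/k_n)$ whenever $N\geq\epsilon+n$; restricting to each finite layer $k_n$ and letting $N\to\infty$ gives $\kappa_1(\tau_{(x)})=i_\MFp(x)^\SCs$. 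The main technical obstacle is this final convergence step, which relies on the local class field theoretic description of $\mathrm{Gal}(k_\infty/k)$ as a quotient of $\MCO_{k_\MFp}^\times$ and the matching of the filtrations $\{1+\MFp^N\}_N$ with the layers $\{\mathrm{Gal}(k_\infty/k_n)\}_n$; everything else is formal manipulation of Artin symbols and the defining identity (\ref{choix}).
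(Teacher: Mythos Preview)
Your proof is correct and follows essentially the same approach as the paper: approximate $x$ by elements $y\equiv 1\pmod{\MFf}$ and $y\equiv x\pmod{\MFp^N}$ via the Chinese Remainder Theorem, apply Lemma~\ref{MCFJ} to get $\kappa(\sigma_{(y)})=i_\MFp(y)$, use (\ref{choix}) to pass to $\kappa_1$, and conclude by continuity. You have simply spelled out in more detail the passage from (\ref{choix}) to $\kappa_1(\sigma|_{k_\infty})=\kappa(\sigma)^{\SCs}$ and the convergence on the Galois side, which the paper dispatches with the single phrase ``by the continuity of $\kappa_1$''.
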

\begin{proof} In his original proof Gillard used the grossencharacter of the elliptic curve $E$ over $k(\MFf)$. Here we give a more elementary proof using only lemma \ref{MCFJ}. Let $(x_n)_{n\geq\epsilon}$ be a sequence of elements in $\MCO_k$ all prime to $\MFf\MFp$, such that $x_n\equiv1$ modulo $\MFf$ and $x_n\equiv x$ modulo $\MFp^n$, for all $n\geq\epsilon$. In particular $i_{\MFp}(x_n)\in1+p^\epsilon\MBZ_p$. Let $\MFc_n:=x_n\MCO_k$, then by lemma \ref{MCFJ} we have $\kappa(\sigma_{\MFc_n})=i_{\MFp}(x_n)$ which implies, thanks to (\ref{choix}), that
\[\kappa_1(\sigma_{\MFc_n})=i_{\MFp}(x_n)^{\SCs}.\]
We deduce the corollary by using the continuity of $\kappa_1$.\hfill$\square$
\end{proof}
\begin{cor}\label{MCFJ}(\cite[Lemme 2.11.3.]{gillard85} for $p>3$)
For any $j\in\{0,...,\SCs-1\}$ the set $\MCF_j:=\lA c^{(j-<\GGs_\MFc>)/\SCs};\MFc\in \MCJ_j\rA$ satisfies the hypothesis of Theorem \ref{gillardsinnott}.
\end{cor}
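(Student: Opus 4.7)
Let $\MFc,\MFc'\in\MCJ_j$ give $a:=c^{(j-<\GGs_\MFc>)/\SCs}$ and $b:=c^{(j-<\GGs_{\MFc'}>)/\SCs}$ in $\MCF_j$, and suppose $\zeta\in\Delta_p$ satisfies $\alpha:=\zeta a/b\in\MCO_k$. The plan is to prove $a=b$, equivalently that $m:=(<\GGs_{\MFc'}>-<\GGs_\MFc>)/\SCs$ vanishes in $\MBZ_p$. Writing $\alpha=\zeta c^m\in\MCO_k$ (a $\MFp$-adic unit), the first move is to dispose of the root of unity by raising to the $N$-th power, where $N:=|\Delta_p|$: since $\zeta^N=1$ one has $\alpha^N=c^{Nm}\in 1+p^\epsilon\MBZ_p$, whence $\alpha^N\equiv 1\pmod{\MFp^\epsilon}$.

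Next I would apply Corollary \ref{imagekappa1} to the principal ideal $\alpha^N\MCO_k=(\alpha\MCO_k)^N$, obtaining
\[
\kappa_1\!\bigl(\GGs_{\alpha\MCO_k}|_{k_\infty}\bigr)^N
= \kappa_1\!\bigl(\GGs_{\alpha^N\MCO_k}|_{k_\infty}\bigr)
= i_\MFp(\alpha^N)^\SCs = c^{N\SCs m}
= \kappa_1\!\bigl(\GGs_{\MFc'\MFc^{-1}}|_{k_\infty}\bigr)^N.
\]
Because $N$ is coprime to $p$ and $1+p^\epsilon\MBZ_p$ is torsion-free, $N$-th roots are unique there, so $\GGs_{\alpha\MCO_k}|_{k_\infty}=\GGs_{\MFc'\MFc^{-1}}|_{k_\infty}$. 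Equivalently, the fractional ideal $\MFI:=\alpha\MCO_k\cdot\MFc\MFc'^{-1}$ (prime to $\MFp$) lies in the kernel of the Artin map $I_k^{(\MFp)}\to\Gal(k_\infty/k)$.

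The final and hardest step is to deduce $m=0$ from $\GGs_\MFI|_{k_\infty}=1$. This will use class field theory for the $\MBZ_p$-extension $k_\infty/k$ unramified outside $\MFp$: for $k$ imaginary quadratic (so $\MCO_k^\times=\mu(k)$ is finite), the Iwasawa module $\varprojlim_n\mathrm{Cl}_{\MFp^n}(k)$ splits as $\mathrm{Cl}(k)\oplus\MBZ_p$ with the $\MBZ_p$-factor being $\Gal(k_\infty/k)$. Using that $\MCJ$ is a set of representatives of $\mathrm{Cl}(k)$ via the Artin map, and that the $\MBZ_p$-component of $\alpha\MCO_k$ is pinned down by Corollary \ref{imagekappa1} through $\alpha^N\equiv 1\pmod{\MFp^\epsilon}$, the vanishing of $\GGs_\MFI|_{k_\infty}$ is expected to force the $\mathrm{Cl}(k)$-component of $\MFc\MFc'^{-1}$ to vanish, giving $\MFc=\MFc'$ and hence $a=b$. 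The main obstacle is exactly this last class-field-theoretic step: I need to verify carefully that the principal factor $\alpha\MCO_k$ cannot mask a nontrivial class of $\MFc\MFc'^{-1}$ in $\mathrm{Cl}(k)$.
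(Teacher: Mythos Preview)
The paper itself does not give a proof here; it simply invokes Corollary~\ref{imagekappa1} and asserts that Gillard's original argument goes through for all $p$. So there is no detailed paper proof to compare against.

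Your argument is correct up to and including the conclusion $\sigma_{\MFI}|_{k_\infty}=1$ with $\MFI:=\alpha\MCO_k\cdot\MFc\MFc'^{-1}$. The gap is exactly where you place it, and your proposed resolution via a splitting $\varprojlim_n\mathrm{Cl}_{\MFp^n}(k)\simeq\mathrm{Cl}(k)\oplus\MBZ_p$ is not right as stated: the inverse limit sits in an exact sequence with $\MBZ_p^\times/\mathrm{im}(\mu(k))$ and $\mathrm{Cl}(k)$, there is an extra $\Delta_p/\mathrm{im}(\mu(k))$ piece, and in any case $k(1)\cap k_\infty=k_f$ may be nontrivial, so the ``$\mathrm{Cl}(k)$-component'' and the ``$\MBZ_p$-component'' are not independent in the way you need. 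As written, $\sigma_\MFI|_{k_\infty}=1$ only gives triviality on $k_f$, which does not by itself force the class of $\MFc\MFc'^{-1}$ to vanish.

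The final step is in fact simpler than you suggest and uses only Corollary~\ref{imagekappa1} again together with the injectivity of $i_\MFp$ on $k$. Since the class of $\MFI$ equals that of $\MFc\MFc'^{-1}$, the ideal $\MFI^{h}$ with $h:=h(k)$ is principal, say $\MFI^{h}=\beta\MCO_k$ for some $\MFp$-unit $\beta\in k^\times$. From $\sigma_{\MFI}|_{k_\infty}=1$ one gets $\sigma_{\beta\MCO_k}|_{k_\infty}=1$. Applying Corollary~\ref{imagekappa1} to $\beta^N$ exactly as you did for $\alpha^N$ yields $\kappa_1(\sigma_{\beta\MCO_k})=\langle i_\MFp(\beta)\rangle^{\SCs}$, so $\langle i_\MFp(\beta)\rangle=1$, i.e.\ $i_\MFp(\beta)\in\Delta_p$. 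Then $i_\MFp(\beta^N)=1$, and since $i_\MFp$ is injective on $k$ we get $\beta\in\mu(k)$, hence $\MFI^{h}=\MCO_k$. The group of fractional ideals of $k$ is torsion-free, so $\MFI=\MCO_k$, i.e.\ $\alpha\MCO_k=\MFc'\MFc^{-1}$. Thus $\MFc$ and $\MFc'$ are in the same ideal class, hence $\MFc=\MFc'$ (as $\MCJ$ represents $\mathrm{Cl}(k)$), and $a=b$.
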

\begin{proof} This is a consequence of the above corollary \ref{imagekappa1}. The proof of Gillard is valid even in the case $p\in\{2,3\}$. So we omit the details.\hfill$\square$
%The proof is based on the following property. If $\mathfrak{c}$ is a principal ideal of $\MCO_k$ prime to $\MFf\MFp$, generated by some $x\equiv1$ modulo $\MFf$ then
%\[\kappa(\sigma_{\MFc})=x.\]
%The details are left to the reader.\hfill$\square$
\end{proof}

\begin{pr}\label{lemchiuprimeyop} (\cite[Th\'eor\`eme 2.9.]{gillard85} for $p>3$) Let $\MFa$ and $\MFb$ be proper ideals of $\MCO_k$, as in Proposition \ref{psicoleman}. Let $\beta:=(\upsilon_n(u_n))$ be the projective system of semi-local units where
\[u_n= \psi\left( \Omega;\MFa^{-1}\MFp^{n}L,(\MFb\MFa)^{-1}\MFp^{n}L \right).\]
%where $\GGa$ is the automorphism given by Proposition \ref{psicoleman}. For all $\GGs\in G_{\MFf,\infty}$, we set $u(\GGs):=u^\GGs$ for convenience.
Then $G_{\mu^0_\beta}(\chi_0, X)$ is prime to $p$ in $\MCO_{{\mathbf D}(\chi_0)}[[X]]$.
\end{pr}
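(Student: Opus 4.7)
The plan is to follow the strategy Gillard used for $p\geq 5$ in \cite{gillard85}, namely to combine the explicit formula for $G_{\mu^0_\beta}(\chi_0,X)$ given by Lemma \ref{liendeshalitgillard} with the Gillard-Sinnott-type Theorem \ref{gillardsinnott}, and then to invoke the linear independence assertion of Proposition \ref{linind} to close the argument. First, I would apply Lemma \ref{liendeshalitgillard} with the system of representatives $\MCR=\{\sigma_\MFc:\MFc\in\MCJ\}$, where $\MCJ$ is the ideal set from Corollary \ref{MCFJ}. For each $\sigma_\MFc\in\MCR$, Proposition \ref{psicoleman} together with the Galois rule (\ref{psigalois}), the homogeneity (\ref{psihomogene}) and Lemma \ref{changementdeperiode} identify the Coleman series of $(\beta^{\sigma_\MFc})_\MFP$ with $\hat{\Psi}^L_{d\Omega,\MFa,\MFb}\circ[d]_E$ for a suitable $d=d(\MFc)\in\MCO_k$ (prime to $6\MFf\MFp\MFb$) that encodes the coset of $\MFc$ modulo principal ideals congruent to $1$ mod $\MFf$.

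The second step is to partition $\MCR$ by the residue $j:=\langle\sigma_\MFc\rangle \bmod \SCs$ as in Corollary \ref{MCFJ}, and to rewrite
\[
G_{\mu^0_\beta}(\chi_0,X)=\sum_{j=0}^{\SCs-1}(1+X)^{-j}\,S_j(X).
\]
Using the variable $Z:=c^{-1}(1+X)^\SCs-1$ that appears in the definition of $G^{i-1}_u$, together with the transformation rules (\ref{stability}) and (\ref{rule2}), each $S_j$ should take, after composition with a unit substitution, the form
\[
IML\Bigl(\sum_{a\in\MCF_j}c_a\,f_a\circ\eta\circ[a]_m\Bigr),
\]
where $\MCF_j=\{c^{(j-\langle\sigma_\MFc\rangle)/\SCs}:\MFc\in\MCJ_j\}$ is the set from Corollary \ref{MCFJ}, and each $f_a$ is the rational function on $\widehat{E}$ attached, via $\partial\widetilde{\log}$, to the Coleman series $\hat{\Psi}^L_{d(\MFc)\Omega,\MFa,\MFb}$.

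Once $S_j$ is in this shape, Theorem \ref{gillardsinnott} applies (its injectivity hypothesis on $\MCF_j$ is provided by Corollary \ref{MCFJ}) and yields
\[
\mu(S_j)\;=\;\min_{a\in\MCF_j}\mu\Bigl(\sum_{\delta\in\mu(k)}(c_a\,f_a\circ\eta)^*\circ[\delta]_m\Bigr).
\]
It then suffices to exhibit one $j$ and one $a\in\MCF_j$ for which the inner $\mu$ is $0$. By Corollary \ref{antecedent}, $f_a\circ\eta$ reduces modulo $p$ to the same object as $\red_2\bigl(D(\widetilde{\log}\,\hat{\Psi}^L_{d\Omega,\MFa,\MFb})\bigr)$. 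Proposition \ref{linind} asserts that these reductions are $\overline{\MBF}_q$-linearly independent as $d$ ranges over an appropriate set $\MCD\subset\MCO_k$ mapping injectively into $(\MCO_k/\MFf)^\times$. In particular, at least one of them is nonzero, so at least one $f_a\circ\eta$ is a unit mod $p$; the symmetrisation by $\mu(k)=\{\pm1\}$ only multiplies by $w_k=2$ (and in the $p=2$ case the proof of Theorem \ref{gillardsinnott} already absorbs this factor by giving the refined equality (\ref{formulepourmu})). Hence $\min_j\mu(S_j)=0$ and therefore $\mu\bigl(G_{\mu^0_\beta}(\chi_0,X)\bigr)=0$.

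The main obstacle is the second step: the detailed bookkeeping needed to convert the factor $(1+X)^{-\langle\sigma_\MFc\rangle}$ together with the substitution $X\mapsto c^{-1}(1+X)^\SCs-1$ implicit in the definition of $G^{i-1}_u$ into a clean composition $\circ[a]_m$ on the inside of an $IML$, while simultaneously tracking the action of $\sigma_\MFc$ on the Coleman side (via Galois action on $\psi$ and composition with $[d]_E$) and on the Lubin--Tate side (via $\kappa$ and $\kappa_1$). Corollary \ref{MCFJ} is precisely what guarantees that the resulting indexing set $\MCF_j$ of Lubin--Tate exponents satisfies the injectivity hypothesis of Theorem \ref{gillardsinnott}; securing that compatibility is the crux of the argument.
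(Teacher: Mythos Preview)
Your overall strategy matches the paper's, but the representative system you take in step one is too small, and this propagates into a genuine gap. The set $\MCJ$ is in bijection with $\Gal(k(1)/k)$, so $\{\sigma_\MFc:\MFc\in\MCJ\}$ has only $h(k)$ elements, whereas a full system $\MCR$ for $G_{\MFf,\infty}/\MCI_\MFf$ must have $[k(\MFf):k]$ elements. The paper therefore takes $\MCR=\{\sigma_\MFd\sigma':\MFd\in\MCJ,\ \sigma'\in\MCR'\}$, where $\MCR'=\{\sigma_b:b\in\Xi\}$ with $\Xi\subset\MCO_k$ a set of representatives of $(\MCO_k/\MFf)^\times/\mu(k)$, all $\equiv1\bmod\MFp^\epsilon$. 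After writing $S_0(c(1+X)-1)=IML(\omega^{i-1}\mu_h)$ and decomposing $h=\sum_{\MFd\in\MCJ_0}\chi_0(\sigma_\MFd)c^{-\langle\sigma_\MFd\rangle/\SCs}\,h_\MFd\circ[c^{-\langle\sigma_\MFd\rangle/\SCs}]_m$, the \emph{outer} sum over $\MCJ_0$ supplies the index set $\MCF_0$ for Theorem \ref{gillardsinnott}, but each $f_a=h_\MFd$ is itself the $\chi_0$-weighted \emph{inner} sum $\sum_{b\in\Xi}\chi_0(\sigma_b)\,D_{\Omega_p}(\widetilde{\log}\,\hat\Psi^L_{b\Omega,\MFa\MFd,\MFb})\circ\eta$, not a single $\hat\Psi$. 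Consequently Proposition \ref{linind} is not invoked just to say ``some term is nonzero mod $p$''; its full linear-independence assertion (applied with $\MCD=\{\delta^{-1}b:\delta\in\mu(k),\,b\in\Xi\}$, which injects into $(\MCO_k/\MFf)^\times$) is exactly what forces the $\chi_0$-twisted combination $\sum_{\delta,b}\chi_0(\sigma_b)\delta^{-1}D_{\Omega_p}(\widetilde{\log}\,\hat\Psi^L_{\delta^{-1}b\Omega,\MFa\MFd,\MFb})\circ\eta$ to be a unit mod $p$. In your version the coefficients $\chi_0(\sigma_b)$ never appear, and the quotient $\Gal(k(\MFf)/k(1))$ is simply unaccounted for.

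A second, smaller point: the implication ``$\min_j\mu(S_j)=0\Rightarrow\mu(G_{\mu^0_\beta}(\chi_0,X))=0$'' is not automatic, since $G=\sum_j(1+X)^{-j}S_j((1+X)^\SCs-1)$ and cancellation is a priori possible. The paper cites \cite[Lemme 2.10.2]{gillard85} to reduce to showing $S_0$ is prime to $p$; you should either cite that lemma or supply the short argument that $1,(1+X)^{-1},\ldots,(1+X)^{-(\SCs-1)}$ are linearly independent over $\overline{\MBF}_p[[(1+X)^\SCs-1]]$.
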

\begin{proof} Let $\MCR'$ be a complete system of representatives of $\Gal\left( k(\MFf\MFp^\infty) / k(1) \right)$ mod $\MCI_\MFf$.
Since $\mu_\beta^0$ does not depend on the choice of $\MCR$, we may take $\MCR=\sqcup_{j=0}^{\SCs-1}\MCR_j$, where $\MCR_j$ is the set of products $\GGs_\MFd\GGs'$ with $\MFd\in \MCJ_j$ and $\GGs'\in \MCR'$.
By Lemma \ref{liendeshalitgillard}, we have
\begin{equation*}
 G_{\mu^0_\beta}(\chi_0, X)= \sum_{j=0}^{\SCs-1} \left( 1+X \right)^{-j} S_j\left( (1+X)^\SCs-1 \right),
\end{equation*}
where for all $j\in\{0,...,\SCs-1\}$, 
\[S_j(X) = \sum_{\GGs\in \MCR_j} \chi_0(\GGs) \left( 1+X \right)^{(j-<\GGs>)/\SCs} G^{i-1}_{\left(\beta^{\GGs}\right)_{\MFP}}(X).\]
In such a situation the lemma 2.10.2 \cite{gillard85} is helpful, since by this lemma it is sufficient to show that $S_0$ is prime to $p$ or, equivalently, $S_0\left( c(1+X)-1 \right)$ is prime to $p$. If for $u\in\MCU_{\MFf,\infty}\left( \MFP \right)$ we denote $f_u$ the power series $D_{\Omega_p}(\widetilde{\log}\,F_{\Omega,u})$ then by (\ref{rule2}) and (\ref{serieG}) we have $S_0\left( c(1+X)-1 \right) = IML(\omega^{i-1}\mu_h)$, where
\begin{equation}
\label{expressSj}
h := \sum_{\GGs\in \MCR_0} \chi_0(\GGs) c^{-<\GGs>/\SCs} 
\left(f_{\left(\beta^{\GGs}\right)_{\MFP}}{\circ} \left[ c^{-<\GGs>/\SCs} \right]_E{\circ}\eta \right).
\end{equation} 
Also we may write
$h = \sum \chi_0\left( \GGs_\MFd \right) c^{-<\GGs_\MFd>/\SCs} h_\MFd\circ\left[c^{-<\GGs_\MFd>/\SCs}\right]_m$, 
where the sum is over all $\MFd\in\MCJ_0$ and where
\[h_\MFd := \sum_{\GGs\in \MCR'}  \chi_0(\GGs) c^{-<\GGs>/\SCs} 
\left(  f_{(\beta^{\GGs_\MFd\GGs})_\MFP}{\circ} \left[ c^{-<\GGs>/\SCs} \right]_E{\circ}\eta\right).\]
Let $\Xi\subset\MCO_k$ be a complete representative system of $(\MCO_k/\MFf)^\times$ modulo $\mu(k)$. We assume that all the elements $b$ in $\Xi$ are prime to $6\MFp\MFf$ and $b\equiv1$ modulo $\MFp^\epsilon$. Let us take $\MCR'$ to be the set of $\sigma_b:=(b\MCO_k, k(\MFf\MFp^n)/k)$ with $b\in\Xi$. Corollary \ref{imagekappa1} then implies
\[i_{\MFp}(b)=c^{<\sigma_b>/\SCs}.\]  
Since $F_\beta=\hat{\Psi}^L_{\Omega,\MFa,\MFb}$ we have
\[c^{-<\GGs_b>/\SCs} 
 f_{(\beta^{\GGs_\MFd\GGs_b})_\MFP}{\circ} \left[ c^{-<\GGs_b>/\SCs} \right]_E=b^{-1}D_{\Omega_p}(\widetilde{\log}\,\hat{\Psi}^L_{\Omega,b\MFa\MFd,\MFb})\circ[b^{-1}]_E=D_{\Omega_p}(\widetilde{\log}\,\hat{\Psi}^L_{b\Omega,\MFa\MFd,\MFb}).\]
The last equality being an application of the above lemma \ref{changementdeperiode}. At this stage we need the following remark. Let $\mu_f$ be an $\MCO_{{\mathbf D}(\chi_0)}$-valued measure on $\MBZ_p$ associated to some power series $f$ in $\MCO_{{\mathbf D}(\chi_0)}[[X]]$, and let $\omega^i*f\in\MCO_{{\mathbf D}(\chi_0)}[[X]]$ be the power series associated to the measure $\omega^i\mu_f$. Then it is easy to check that $f$ is prime to $p$ if, and only if, $\omega^i*f$ is prime to $p$. Thereby, Theorem \ref{gillardsinnott} and Lemma \ref{MCFJ} say us we only have to prove that for some $\MFd\in\MCJ_0$ the power series
\[\chi_0\left( \GGs_\MFd \right)c^{-\left<\GGs_\MFd\right>/\SCs}\Bigl(\sum_{\delta\in\mu(k)\atop b\in\Xi}\chi_0(\sigma_b)D_{\Omega_p}(\widetilde{\log}\,\hat{\Psi}^L_{b\Omega,\MFa\MFd,\MFb})\circ[\delta]_E\circ\eta\Bigr)\]
is prime to $p$. Since $D_{\Omega_p}(\widetilde{\log}\,\hat{\Psi}^L_{b\Omega,\MFa\MFd,\MFb})\circ[\delta]_E=\delta^{-1}D_{\Omega_p}(\widetilde{\log}\,\hat{\Psi}^L_{\delta^{-1}b\Omega,\MFa\MFd,\MFb})$, thanks to Lemma \ref{changementdeperiode}, we are led to investigate the coefficients of the power series
\[\sum_{\delta\in\mu(k)\atop b\in\Xi}\chi_0(\sigma_b)\delta^{-1}D_{\Omega_p}(\widetilde{\log}\,\hat{\Psi}^L_{\delta^{-1}b\Omega,\MFa\MFd,\MFb})\circ\eta.\]
The last step is to apply Proposition \ref{linind} whose second hypothesis is satisfied by the elements $\delta b$ of $\MCO_k$. Hence we deduce that the above sum has at least one coefficient prime to $\MFp_{{\mathbf D}(\chi_0)}$. This completes the proof of the proposition.\hfill$\square$
\end{proof}

\subsection{The $\mu$-invariant of $L_{p,\MFf}$}
The aim of this short subsection is to prove the following
\begin{theo}\label{gpfetlpf} For any $\MFf$ and any $p$-adic character $\chi=\chi_0$ of $G_{\MFf,\infty}$ ($\chi_1=1$), the power series $G_{p,\MFf}(\chi_0, X)$ is prime to $p$ in $\MCO_{{\mathbf D}(\chi_0)}[[X]]$.
\end{theo}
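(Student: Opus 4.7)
The plan is to deduce this theorem from Proposition~\ref{lemchiuprimeyop} by combining the explicit relation $\mu^0_\beta = \sigma_\MFa(N(\MFb)-\sigma_\MFb)\mu^\MFf$ with the multiplicativity of the $\mu$-invariant. I would first treat the core case in which $\MFf\neq(1)$ and $w_\MFf=1$. Choose ideals $\MFa$ prime to $6\MFf\MFp$ and $\MFb$ prime to $6\MFa\MFf\MFp$, and take the projective system
\[
\beta := \bigl(\upsilon_n\bigl(\psi(\Omega; \MFa^{-1}\MFp^n L, (\MFb\MFa)^{-1}\MFp^n L)\bigr)\bigr)_n
\]
appearing in Proposition~\ref{lemchiuprimeyop}, whose associated measure on $G_{\MFf,\infty}$ is $\mu^0_\beta = \mu^\MFf(\MFa,\MFb) = \sigma_\MFa(N(\MFb)-\sigma_\MFb)\mu^\MFf$ by the construction of subsection~\ref{themeasure}.

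Translating this identity of measures into power series via the rule $\int f\,d(\tau\cdot\nu)(\sigma)=\int f(\tau\sigma)\,d\nu(\sigma)$ and the correspondence $\kappa_1(\sigma)^s\leftrightarrow (1+X)^{\alpha(\sigma)}$ obtained from $c^s=1+X$ yields
\[
G_{\mu^0_\beta}(\chi_0,X) = \chi_0(\sigma_\MFa)^{-1}(1+X)^{\alpha(\sigma_\MFa)}\cdot\bigl(N(\MFb)-\chi_0(\sigma_\MFb)^{-1}(1+X)^{\alpha(\sigma_\MFb)}\bigr)\cdot G_{p,\MFf}(\chi_0,X).
\]
The leading factor is a unit in $\MCO_{\mathbf D(\chi_0)}[[X]]^\times$, while $C(X):=N(\MFb)-\chi_0(\sigma_\MFb)^{-1}(1+X)^{\alpha(\sigma_\MFb)}$ lies in $\MCO_{\mathbf D(\chi_0)}[[X]]$ because $N(\MFb)\in\MBN$, the root of unity $\chi_0(\sigma_\MFb)^{-1}$ is a unit of $\MCO_{\mathbf D(\chi_0)}$, and $\alpha(\sigma_\MFb)\in\MBZ_p$. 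By Proposition~\ref{lemchiuprimeyop}, $\mu(G_{\mu^0_\beta}(\chi_0,X))=0$; Weierstrass preparation in the complete discrete valuation ring $\MCO_{\mathbf D(\chi_0)}$ gives the additivity $\mu(fg)=\mu(f)+\mu(g)$, so the displayed factorization forces $\mu(C)+\mu(G_{p,\MFf}(\chi_0,X))=0$ with both summands non-negative. Hence $\mu(G_{p,\MFf}(\chi_0,X))=0$, which is the desired conclusion.

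The remaining cases reduce to this one. If $\MFf\neq(1)$ has $w_\MFf>1$, pick $m$ with $w_{\MFf^m}=1$; the definition $\mu^\MFf=(\res^{\MFf^m}_\MFf)_*\mu^{\MFf^m}$ gives $G_{p,\MFf}(\chi_0,X)=G_{p,\MFf^m}(\chi_0\circ\res^{\MFf^m}_\MFf,X)$, already handled. For $\MFf=(1)$ and $\chi_0=1$, the direct definition $G_{p,(1)}(1,X):=G_{p,\MFq}(1,X)$ settles it. For $\MFf=(1)$ and $\chi_0\neq 1$, fix any prime $\MFq\neq\MFp$ and apply the identity
\[
G_{p,\MFq}(\chi_0\circ\res^\MFq_{(1)},X)=\bigl(1-\chi_0(\sigma_\MFq)(1+X)^{-\alpha(\sigma_\MFq)}\bigr)\,G_{p,(1)}(\chi_0,X)
\]
recorded in section~\ref{Lfunction}; the left-hand side has $\mu$-invariant $0$ by the previous step, both factors on the right are integral in $\MCO_{\mathbf D(\chi_0)}[[X]]$ (for $G_{p,(1)}(\chi_0,X)$ this is the defining feature of $\mu^{(1)}$ being a pseudo-measure paired with a non-trivial character), and $\mu$-additivity once more yields $\mu(G_{p,(1)}(\chi_0,X))=0$. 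The main obstacle in this whole proof is hidden in Proposition~\ref{lemchiuprimeyop}, which itself rests on Theorem~\ref{gillardsinnott} together with the linear-independence input from Proposition~\ref{linind}; the present theorem is essentially a bookkeeping step, translating the elliptic-unit measure $\mu^0_\beta$ to the universal measure $\mu^\MFf$ and exploiting non-negativity of $\mu$ on integral power series.
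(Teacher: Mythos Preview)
Your proof is correct and follows essentially the same approach as the paper: reduce to the case $w_\MFf=1$, use the identity $\mu^0_\beta=\sigma_\MFa(N(\MFb)-\sigma_\MFb)\mu^\MFf$ to factor $G_{\mu^0_\beta}(\chi_0,X)$ as an integral power series times $G_{p,\MFf}(\chi_0,X)$, and invoke Proposition~\ref{lemchiuprimeyop} together with additivity of the $\mu$-invariant. The paper's own proof is a terse two-line version of exactly this; your write-up is more explicit in spelling out the reductions for $w_\MFf>1$ and for $\MFf=(1)$, which the paper absorbs into the single phrase ``by construction we may assume that $w_\MFf=1$''.
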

\begin{proof} The Theorem is a simple consequence of Proposition \ref{lemchiuprimeyop} since by construction we may assume that $w_{\MFf}=1$. But in this case we have

\[\Bigl(N(\MFb)-\chi(\sigma_{\MFb})^{-1}(1+X)^{<\sigma_{\MFb}>}\Bigr)G_{p,\MFf}(\chi,X)=G_\beta(\chi,X),\]
For any $\beta:=(\upsilon_n(\psi\left( \Omega;\MFa^{-1}\MFp^{n}L,(\MFb\MFa)^{-1}\MFp^{n}L \right)^{\nu}))$.
This concludes the proof.\hfill$\square$
\end{proof}

\section{From $p$-adic $L$ functions to $X_\infty$}\label{lien}
Let $K$ be the abelian extension of $k$ fixed in the introduction, and let us denote by $K_n$ (resp. $M_n$) the unique abelian extension of $k$ such that $K\subset K_n\subset K_\infty$ and $[K_n:K]=p^n$ (resp. the maximal abelian $p$-extension of $K_n$ unramified outside of $\mf{p}$. Since $M_\infty=\cup M_n$ the group $X_\infty$ is equal to the inverse limit

\begin{equation*}
X_\infty=\varprojlim_n\mr{Gal}(M_n/K_n),
\end{equation*}
Moreover, if $\Gamma_n:=\mathrm{Gal}(K_\infty/K_n)=\Gamma^{p^n}$ then it is easy to see that the module of co-invariants $(X_\infty)_{\Gamma_n}$ of $X_\infty$ is isomorphic to $\mr{Gal}(M_n/K_\infty)$,
\begin{equation*}
(X_\infty)_{\Gamma_n}\simeq\mr{Gal}(M_n/K_\infty).
\end{equation*}
But $\mr{Gal}(M_n/K_\infty)$ is finite. This is a consequence of class field theory and the non vanishing of the $\mf{p}$-adic regulator $R_{\mf{p}}(K_n)$, \cite[Theorem 2$'$]{Bru67}. Let us write $a\sim b$ for any numbers $a$ and $b$ such that the quotient $a/b$ is a $p$-unit.

\begin{lm} Let $e$ and $f$ be defined by $K\cap k_\infty=k_e$ and $k(1)\cap k_\infty=k_f$. Let $q:=4$ if $p=2$ and $q:=p$ otherwise. Then we have
\begin{equation}\label{cwgeneral}
[M_n:K_\infty]\sim qp^{n+e-f}\frac{h(K_n)R_{\mf{p}}(K_n)}{w_{K_n}\sqrt{\Delta_{\mf{p}}(K_n)}}\prod_{\mathfrak{P}}\Bigl(1-\frac{1}{N(\mathfrak{P})} \Bigr).
\end{equation}
As usual,  $\Delta_{\mf{p}}(K_n)$ is the $\mathfrak{p}$-component of the relative discriminant of $K_n/k$. The product in the formula is over all the prime ideals of $K_n$ lying above $\mathfrak{p}$.
\end{lm}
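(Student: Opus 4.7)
The plan is to apply class field theory to describe $\mathrm{Gal}(M_n/K_n)$, identify the torsion subgroup with $\mathrm{Gal}(M_n/K_\infty)$, and evaluate its order via the $\MFp$-adic regulator. Let $\MCU_n^1$ denote the pro-$p$ part of the semi-local units $\prod_{\mathfrak{P}|\MFp}\mathcal{O}_{K_{n,\mathfrak{P}}}^{\times}$ and $\bar{E}_n^1$ the closure in $\MCU_n^1$ of the diagonal image of the global units $\mathcal{O}_{K_n}^\times$. Since $M_n$ is the maximal abelian pro-$p$ extension of $K_n$ unramified outside $\MFp$, global class field theory provides a short exact sequence of pro-$p$ groups
\[
0\longrightarrow\MCU_n^1/\bar{E}_n^1\longrightarrow\mathrm{Gal}(M_n/K_n)\longrightarrow A_n\longrightarrow 0,
\]
where $A_n$ is the $p$-primary part of the ideal class group of $K_n$. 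Because $K_n$ is totally complex of degree $2dp^n$ over $\mathbb{Q}$ (with $d=[K:k]$), Dirichlet's theorem gives $\mathrm{rank}_{\mathbb{Z}_p}\MCU_n^1=dp^n$ and $\mathrm{rank}_{\mathbb{Z}_p}\bar{E}_n^1=dp^n-1$, the latter using $R_\MFp(K_n)\neq 0$ (Brumer). Thus $\MCU_n^1/\bar{E}_n^1$ has $\mathbb{Z}_p$-rank one, matching exactly the $\mathbb{Z}_p$-quotient $\mathrm{Gal}(K_\infty/K_n)$ of $\mathrm{Gal}(M_n/K_n)$; consequently $[M_n:K_\infty]$ equals $|A_n|$ times the torsion order of $\MCU_n^1/\bar{E}_n^1$.

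Next, I would dispose of the first factor using $|A_n|\sim h(K_n)$ up to $p$-units, and compute the torsion order of $\MCU_n^1/\bar{E}_n^1$ through the $\MFp$-adic logarithm $\log:\MCU_n^1\to\prod_{\mathfrak{P}|\MFp}K_{n,\mathfrak{P}}$: its kernel has order $w_{K_n}$, and a standard volume computation shows that the covolume of $\log(\MCU_n^1)$ with respect to a suitably normalized Haar measure on $\prod_{\mathfrak{P}|\MFp}K_{n,\mathfrak{P}}$ is
\[
\prod_{\mathfrak{P}|\MFp}\Bigl(1-\frac{1}{N(\mathfrak{P})}\Bigr)\cdot\Delta_\MFp(K_n)^{-1/2},
\]
the Euler factors arising from the passage from full local units to principal units, and the discriminant factor from the customary measure normalization on $\prod_{\mathfrak{P}|\MFp}K_{n,\mathfrak{P}}$. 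The image of $\bar{E}_n^1$ is a sublattice of corank one, and its covolume in the codimension-one hyperplane transverse to the $\mathbb{Z}_p$-extension direction is by definition the $\MFp$-adic regulator $R_\MFp(K_n)$ (cf.\ the definition recalled in \S\ref{notation}). Assembling these contributions produces the main term
\[
\frac{h(K_n)\,R_\MFp(K_n)}{w_{K_n}\sqrt{\Delta_\MFp(K_n)}}\prod_{\mathfrak{P}|\MFp}\Bigl(1-\frac{1}{N(\mathfrak{P})}\Bigr)
\]
up to $p$-units.

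The residual factor $qp^{n+e-f}$ records the bookkeeping needed to normalize the $\mathbb{Z}_p$-extension direction correctly: the power $p^n$ corrects for the comparison between a topological generator of $\mathrm{Gal}(K_\infty/K_n)$ and one of $\mathrm{Gal}(K_\infty/K)$, while $p^{e-f}$ encodes the intersection data $K\cap k_\infty=k_e$ and $k(1)\cap k_\infty=k_f$ that relates the $\mathbb{Z}_p$-extension $K_\infty/K$ to the reference $\mathbb{Z}_p$-extension $k_\infty/k$ used to build $\kappa_1$. The factor $q$ (equal to $4$ for $p=2$ and to $p$ otherwise) comes from working with $1+p^\epsilon\mathbb{Z}_p$ where $\epsilon=2$ in the dyadic case. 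The hard part will be precisely this interlocking of finite local and global contributions: none of them is individually deep, but they must all conspire to recover the formula modulo $p$-units. In particular, the $p=2$ case requires careful attention since the roots of unity $\mu(K_n)$, the $\epsilon=2$ convention underlying $\kappa_1$, and the factor $q=4$ all interact nontrivially, and discrepancies that would be harmless in the odd-$p$ setting must here be absorbed by the extra factor of two.
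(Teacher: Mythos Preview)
Your sketch is correct in structure and is in fact considerably more detailed than what the paper provides: the paper's proof consists solely of a citation to Coates--Wiles \cite[Theorem 11]{CW76} (for $p>2$ and $p\nmid h(k)$) and to de Shalit \cite[Proposition 2.7]{deshalit87} (for $p>2$ and $K=k(\MFf\MFp)$), with the remark that the general case is proved similarly. What you have outlined is precisely the class-field-theoretic argument carried out in those references---the exact sequence for $\mathrm{Gal}(M_n/K_n)$, the rank computation via Brumer's theorem, and the evaluation of the torsion via the $\MFp$-adic logarithm and regulator. Your acknowledgment that the bookkeeping behind $qp^{n+e-f}$ is the delicate part (especially for $p=2$) is apt; the paper sidesteps this entirely by appeal to the cited sources, so if you intend to write out a self-contained proof you will indeed have to track those normalizations carefully, but nothing in your plan is wrong.
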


\begin{proof} This is \cite[Theorem 11]{CW76} in case $p>2$ and $p$ does not divide the class number of $k$. The general case is proved in a similar way. We point out that in case $p>2$ and $K=k(\mathfrak{fp})$ for some integral ideal $\mathfrak{f}$ of $k$ prime to $\mathfrak{p}$ this lemma is nothing but \cite[Proposition 2.7, page 112]{deshalit87}. \hfill$\square$
\end{proof}
\bigskip
Now let $\Theta_{I(\mathfrak{G})}$ be the group of elliptic units defined by R. Gillard and G. Robert in \cite[\S1]{GilRob79}. It is proved in \cite[Corollary of \S2]{GilRob79} that $\Theta_{I(\mathfrak{G})}$ is a subgroup of $\mathcal{O}_K^\times$ isomorphic as a Galois module to the augmentation ideal of $\mathbb{Z}[\mathrm{Gal(K/k)}]$. If $\chi$ is a non-trivial $\overline{\mathbb{Q}}_p$-character of $\mathrm{Gal}(K_n/k)$ then we define the sum $S(\chi)$ as follows
\begin{equation*}
S(\chi):=
\begin{cases}
\sum_{\sigma\in\mathrm{Gal}(k(\MFg_\chi)/k)}\chi(\sigma)\log(i_{\MFp}(\varphi_{\MFg_\chi}(\sigma))) & \mr{if}\  \MFg_\chi\neq(1),\\
 & \\
\frac{1}{h(k)}\sum_{\sigma\in\mathrm{Gal}(k(\MFg_\chi)/k)}\chi(\sigma)\log(i_{\MFp}(\delta(\sigma))) & \mr{if}\  \MFg_\chi=(1), 
\end{cases}
\end{equation*}
where $\MFg_\chi$ is the conductor of $\chi$. If $\MFg_\chi\neq(1)$ then $\varphi_{\MFg_\chi}(\sigma)\in k(\MFg_\chi)$ is the Robert invariant defined in \cite[\S 2.2 Définition page 15]{robert73}. See also \cite[formula (17) page 55]{deshalit87}. By $\delta(\sigma)\in k(1)$ we mean the Siegel unit defined for instance in \cite[chap.2 \S2]{Siegel61}. See also \cite[\S 3.1 Définition page 24]{robert73} or \cite[\S 2.2 Proposition page 49]{deshalit87}.\par
By using the formulas (2) and (3) in \cite{GilRob79} and the fact that
\begin{equation*}
[\mathcal{O}_K^\times:\Theta_{I(\mathfrak{G})}]\sim\frac{R_{\mf{p}}(\Theta_{I(\mathfrak{G})})w_{K_n}}{R_{\mf{p}}(K_n)},
\end{equation*}
we find the relation 
\begin{equation}\label{regulateur}
\frac{w_kR_{\mf{p}}(K_n)h(K_n)}{h(k)w_{K_n}}\sim\prod_{\chi\neq1}\frac{S(\chi)}{12g_\chi w_{\mathfrak{g}_\chi}},
\end{equation}
where the product is over all the non-trivial $\overline{\mathbb{Q}}_p$-characters of $\mathrm{Gal}(K_n/k)$. For each such $\chi$ we have written $g_\chi$ for the least positive integer in $\MFg_\chi$ and $w_{\mathfrak{g}_\chi}$ for the number of roots of unity in $k$ congruent to $1$ modulo $\MFg_\chi$.

From (\ref{cwgeneral}) and (\ref{regulateur}) we deduce
\begin{equation*}
[M_n:K_\infty]\sim\frac{qp^{n+e-f}h(k)}{w_k\sqrt{\Delta_{\mf{p}}(K_n)}}\prod_{\mathfrak{P}}\Bigl(1-\frac{1}{N(\mathfrak{P})} \Bigr)\prod_{\chi\neq1}\frac{S(\chi)}{12g_\chi w_{\mathfrak{g}_\chi}}.
\end{equation*}
But recall the $p$-adic Kronecker formula stated in \cite[Theorem 5.2 page 88]{deshalit87}. We have
\[L_{p,\MFf_\chi}(\chi,0)=-G(\chi^{-1})(1-\frac{\chi^{-1}(\MFp)}{p})\frac{S(\chi)}{12g_\chi w_{\mathfrak{g}_\chi}},\]
Where $\MFf_\chi$ is the part of $\MFg_\chi$ prime to $\MFp$. If $\MFg_\chi=\MFf_\chi\MFp^{n_\chi}$ and $\MFf$ is any multiple of $\MFf_\chi$ prime to $\MFp$ and such that $w_{\MFf}=1$ then 
\[G(\chi):=\frac{1}{p^{n_\chi}}\sum_{\gamma\in\mathrm{Gal}(k(\MFf\MFp^{n_\chi})/k(\MFf))}\chi(\gamma)\zeta_{n_\chi}^{-\kappa(\gamma)}.\]
It is easy to check the relation $G(\chi)G(\chi^{-1})=\chi(\tau)/p^{n_\chi}$ where $\tau$ is the restriction to $k(\MFf\MFp^{n_\chi})$ of $\kappa^{-1}(-1)$. Thus, by the conductor-discriminant theorem proved for instance in \cite[Theorem 7.15]{Iwasawa86} we obtain the relation
\[\Big(\prod_\chi G(\chi)\Big)^2=\frac{\pm1}{\Delta_{\mf{p}}(K_n)}.\] 
Further, we remark that $\prod_{\mathfrak{P}}(1-\frac{1}{N(\mathfrak{P})})=(1-1/p)\prod_{\chi\neq1}(1-\frac{\chi^{-1}(\MFp)}{p})$  and $(1-1/p)q/w_k\sim1$. Therefore
\[[M_n:K_\infty]\sim[k(1):k_f]p^{n+e}\prod_{\chi\neq1}L_{p,\MFf_\chi}(\chi,0).\]
Using the computations made in the first half of the subsection \ref{Lfunction} we have
\begin{equation*}
L_{p,\MFf_\chi}(\chi,0)=
\begin{cases}
G_{p,\MFf_\chi}(\chi_0,\chi_1(\gamma_0)^{-1}-1)& if\ \chi_0\neq1,\\
&\\
\frac{G_{p,(1)}(1,\chi_1(\gamma_0)^{-1}-1)}{1-\chi_1(\gamma_0)^{-1}}& if\ \chi_0=1.
\end{cases}
\end{equation*}
Since $\MFf_\chi=\MFf_{\chi_0}$ and the product of $\chi_1(\gamma_0)^{-1}-1$, for $\chi_1\neq1$ is equal to $p^{n+e}$ we obtain
\[[M_n:K_\infty]\sim[k(1):k_f]\prod G_{p,\MFf_{\chi_0}}(\chi_0,\zeta-1),\]
where the product concerns all the characters $\chi_0$ of $\mathrm{Gal}(K_n/k_{n+e})\simeq\mathrm{Gal}(K_\infty/k_\infty)$ and all the $p^{n+e}$-th roots of unity but $(\chi_0,\zeta)\neq(1,1)$. Now it is time to use Theorem \ref{gpfetlpf}, which says us that for a fixed $\chi_0$ there exists $n_0$ such that
\[G_{p,\MFf_{\chi_0}}(\chi_0,\zeta-1)\sim(\zeta-1)^{\lambda_{\chi_0}},\]
for all root of unity $\zeta$ of order $\geq p^{n_0}$, where $\lambda_{\chi_0}$ is the $\lambda$-invariant of $G_{p,\MFf_{\chi_0}}(\chi_0,X)$. Thus there is a constant $\nu_{an}$ such that
\begin{equation}\label{final}
[M_n:K_\infty]\sim[k(1):k_f]p^{\lambda_{an} n+\nu_{an}}
\end{equation}
for all sufficiently large $n$, with $\lambda_{an}:=\sum \lambda_{\chi_0}$. Since $[M_n:K_\infty]=\#(X_\infty)_{\Gamma_n}=p^{\mu_{\infty}p^n+\lambda_{\infty}n+\nu_{\infty}}$ for any $n>>0$, where $\mu_{\infty}$, $\lambda_{\infty}$ and $\nu_{\infty}$ are the Iwasawa invariants of $X_\infty$ we deduce that
\begin{equation}
\mu_\infty=0,\quad \lambda_{\infty}=\lambda_{an}\quad\mathrm{and}\quad\nu_{\infty}=\nu_{an}.
\end{equation}
This proves the theorem \ref{theomu} given in the introduction. The interested reader may take a look on \cite{viguie11c} where a comparison between the Iwasawa invariants of the projective limit of the $p$-class group and the projective limit of units modulo elliptic units.

\bibliographystyle{amsplain}

\noindent\texttt{hassan.oukhaba@univ-fcomte.fr}

\noindent\texttt{viguie@mathematik.uni-muenchen.de}
\end{document}